\documentclass{amsart}
\textwidth=14.5cm
\oddsidemargin=1cm
\evensidemargin=1cm
\usepackage{pst-all}
\usepackage{amsmath}
\usepackage{amsfonts}
\usepackage{amssymb}
\usepackage{multicol}
\usepackage{verbatim}
\usepackage{amsthm}
\usepackage{amscd}
\usepackage{pb-diagram}
\usepackage[mathscr]{eucal}
\usepackage{stmaryrd}

\usepackage{mathscinet}
\usepackage[T1]{fontenc}

\usepackage{graphics}

\usepackage{doi}
\usepackage{hyperref}
\hypersetup{
    colorlinks,
    citecolor=blue,
    filecolor=black,
    linkcolor=red,
    urlcolor=blue
}

\newtheorem{theorem}{Theorem}[section]
\newtheorem{lemma}[theorem]{Lemma}
\newtheorem{corollary}[theorem]{Corollary}
\newtheorem{proposition}[theorem]{Proposition}
\newtheorem{definition}[theorem]{Definition}
\newtheorem{conjecture}[theorem]{Conjecture}
\newtheorem{remark}[theorem]{Remark}

\newtheorem{example}[theorem]{Example}

\numberwithin{equation}{section}
%\numberwithin{figure}{section}

\setcounter{tocdepth}{2}

%----------------------------------------------------------------------------------------
%simplify math symbols
%----------------------------------------------------------------------------------------

	%basic sets 
    \newcommand{\N}{\mathbb{N}}
    \newcommand{\Z}{\mathbb{Z}}
    
    \newcommand{\Ci}{\mathbb{C}}
    
    \newcommand{\catO}{\mathcal{O}}

        %basic functions

    \newcommand{\Hom}{{\rm{Hom}}}

        %font letter

        %Lie algebras
    
    \newcommand{\g}{\mathfrak{g}}
    
    \newcommand{\hatg}{\hat{\mathfrak{g}}}

    \newcommand{\sltwo}{\mathfrak{sl}_2}

        %quantum groups
    \newcommand{\qbinom}[2]{\genfrac{[}{]}{0pt}{}{#1}{#2}}

    \newcommand{\Uqghat}{\mathcal{U}_q\hat{\mathfrak{g}}}
    \newcommand{\Uqb}{\mathcal{U}_q\mathfrak{b}}
    \newcommand{\Uqh}{\mathcal{U}_q\mathfrak{h}}
    \newcommand{\Uqhaffine}{\mathcal{U}_q\tilde{\mathfrak{h}}}
    \newcommand{\Uqlg}{\mathcal{U}_q(\mathcal{L}\mathfrak{g})}
    \newcommand{\bpsi}{\mathbf{\Psi}}
    \newcommand{\ringY}{\mathbb{Z}[Y_{i,aq^n}^{\pm 1}]_{i \in I,n \in \mathbb{Z}}}
    \newcommand{\ringA}{\mathbb{Z}[A_{i,aq^n}^{\pm 1}]_{i \in I,n \in \mathbb{Z}}}

    \newcommand{\ringYa}{\mathbb{Z}[Y_{i,a}^{\pm 1}]_{i \in I,a \in \mathbb{C}^*}}
    \newcommand{\ringAa}{\mathbb{Z}[A_{i,a}^{\pm 1}]_{i \in I,a \in \mathbb{C}^*}}

        %twisted quantum groups

        %braid groups

\begin{document}

\title{Weyl group twists and representations of quantum affine Borel algebras}

\author{Keyu Wang}

\address{Universit\'e Paris Cit\'e and Sorbonne Universit\'e, CNRS, IMJ-PRG, F-75006, Paris, France}

\email{keyu.wang@imj-prg.fr}

%\date{\today}
\begin{abstract}
    We define categories $\catO^w$ of representations of Borel subalgebras $\Uqb$ of quantum affine algebras $\Uqghat$, which come from the category $\catO$ twisted by Weyl group elements $w$. We construct inductive systems of finite-dimensional $\Uqb$-modules twisted by $w$, which provide representations in the category $\catO^w$. We also establish a classification of simple modules in these categories $\catO^w$.
    
    We explore convergent phenomenon of $q$-characters of representations of quantum affine algebras, which conjecturally give the $q$-characters of representations in $\catO^w$.

    Furthermore, we propose a conjecture concerning the relationship between the category $\catO$ and the twisted category $\catO^w$, and we propose a possible connection with shifted quantum affine algebras.
\end{abstract}

\maketitle
\tableofcontents

%----------------------------------------
%SECTION
%----------------------------------------
\section{Introduction}
The action of the Weyl group $W$ on a finite-dimensional simple Lie algebra $\g$ reveals crucial symmetries of the characters of their representations. For finite-dimensional representations $V$ of the Lie algebra $\g$, their characters $\chi(V)$ remain invariant under the Weyl group action. However, this invariance does not extend to the infinite-dimensional representations in the category $\catO_{\mathfrak{g}}$, such as Verma modules.

Recent interest in the Weyl group action related to these infinite-dimensional representations has been motivated by the works of E.~Frenkel and D.~Hernandez \cite{frenkel2022weyl}. More precisely, the motivation and difficulties lie at the level of quantum affine algebras $\Uqghat$.

It is important to note that on quantum affine algebras $\Uqghat$, the actions of the Weyl group are replaced by those of the corresponding braid group $\mathcal{B}$. These are known as Lusztig's automorphisms $T_{i,\pm1}'$, $T_{i,\pm1}''$ of quantum groups \cite{lusztig1993introduction}. One natural question is how this braid group symmetry on algebras manifests on the characters of their representations.

For representations $V$ of quantum affine algebras $\Uqghat$, we have the usual characters $\chi(V)$ which correspond to the action of the Cartan subalgebra $\Uqh$. These usual characters have well-behaved symmetry under the Weyl group action. In the representation theory of quantum affine algebras, we are particularly interested in the concept of $q$-characters, which correspond to the action of the affine Cartan subalgebra $\Uqhaffine$ and contain more information than the usual characters. However, Lusztig's automorphisms do not preserve the subalgebra $\Uqhaffine$. Despite this, the $q$-characters $\chi_q(V)$ still exhibit certain symmetries under the braid group action.

V.~Chari defined a braid group action on the affine Cartan subalgebra $\Uqhaffine$, and this action provides some information on the $q$-characters \cite{chari2002braid}. For example, for a finite-dimensional irreducible representation $V$ of $\Uqghat$, the terms in its $q$-character $\chi_q(V)$ with extremal weights are given by Chari's braid group action. However, this action does not reflect the symmetry of the entire $q$-character $\chi_q(V)$. 

Recently, E.~Frenkel and D.~Hernandez discovered a Weyl group symmetry on $q$-characters \cite{frenkel2022weyl}. They also generalize this Weyl group action to the category $\catO$ of infinite-dimensional representations of Borel subalgebras $\Uqb$ \cite[Section~3.6]{frenkel2023extended}. They conjectured that this Weyl group action enables the calculation of $q$-characters of some representations in $\catO$ that were previously unknown \cite[Conjecture~4.8]{frenkel2023extended}.

More precisely, E.~Frenkel and D.~Hernandez have introduced a family of topological complete rings $\bar{\mathscr{Y}}^{'w}$ labeled by Weyl group elements $w \in W$, and they have constructed a Weyl group action on the direct sum of these rings. In this article, we explore some new $\Uqb$-modules in other categories twisted from $\catO$, whose $q$-characters belong to the ring $\bar{\mathscr{Y}}^{'w}$. As remarked in \cite[Remark~4.9]{frenkel2023extended}, it is expected that the $q$-characters of these new $\Uqb$-modules are related to the $q$-characters of $\Uqb$-modules in $\catO$ by the Weyl group action of Frenkel-Hernandez. 

We define new categories $\catO^w$ of representations of $\Uqb$ by conditions similar to the category $\catO$, but with weights dominated by a different set of negative roots $w(\Phi_-)$. In these categories, we classify simple modules and construct certain representations as limits of finite-dimensional representations.

The difficulty is that there is no known Weyl group action or braid group action on Borel subalgebras. Consequently, there is no natural way to derive representations in $\catO^w$ from those in $\catO$, and it is unknown to the author whether there is an equivalence of categories between $\catO$ and $\catO^w$. Furthermore, the actions of the algebra on representations in the categories $\catO^w$ differ from those in $\catO$. For example, the triangular decomposition of Borel subalgebras, which was crucial for studying the category $\catO$, has not been found an analogue for $\catO^w$.

Although representations in $\catO^w$ and in $\catO$ are not directly connected, their characters exhibit a close relationship. We conjecture that the usual characters $\chi(V)$ of simple representations $V$ in $\catO^w$ can be derived from those in $\catO$ through the standard Weyl group action.

Our study extends to their $q$-characters as well. From the study of $T$-systems, it is known that the normalized $q$-characters of certain simple finite-dimensional representations of $\Uqghat$, known as Kirillov-Reshetikhin modules, form a convergent sequence in a topological ring \cite{kuniba2002canonical,nakajima2003t,hernandez2006kirillov}. We hope to generalize this convergent phenomenon twisted by any Weyl group element, by introducing what we term projected limits, which will be defined in this article. The convergence of normalized $q$-characters of Kirillov-Reshetikhin modules has an algebraic explanation by Borel subalgebras \cite{hernandez2012asymptotic}. We also conjecture that the $q$-characters for some representations in $\catO^w$ can be calculated using the convergent projected limits, which gives an algebraic explanation of the convergent phenomenon twisted by Weyl group elements.

Finally, we conjecture that $q$-characters of representations in $\catO^w$ can be factorized into a product of a constant part with a non-constant part. We suggest two further research directions on this conjecture. Firstly, we examine the relationship between the $q$-character of a representation in $\catO^w$ and the representation in $\catO$ classified by the same highest weight. This conjectural relation offers an alternative method for calculating the $q$-characters of certain representations in $\catO$, including those mentioned in \cite[Conjecture~4.8]{frenkel2023extended}. Secondly, we explore the potential link between our conjecture and shifted quantum affine algebras.

The structure of this article is organized as follows:

In Section~\ref{sec:preliminaries}, we review basic definitions and some results on quantum affine algebras.

In Section~\ref{sec:limits of q characters}, we define various topological completions of the ring of $q$-characters, which are related to those considered in \cite{frenkel2023extended}. We define a notion of projected normalized characters, and we propose a conjecture regarding the convergent phenomenon of the projected normalized characters (Conjecture~\ref{conj:limit of projected w norm}).

In Section~\ref{sec:inductive systems}, we construct inductive systems and projective systems of finite-dimensional representations twisted by Lusztig's automorphisms. We demonstrate that the inductive limits and projective limits are equipped with a well-defined action of Borel subalgebras (Theorem~\ref{thm:inductive system}).

In Section~\ref{sec:Borel subalgebras}, we define and explore the categories $\catO^w$ of representations of Borel subalgebras, where the limits constructed in the previous section lie in. Despite the absence of a triangular decomposition of Borel subalgebras for studying $\catO^w$, we still manage to classify simple modules in the category $\catO^w$ using a non-standard method (Theorem~\ref{thm:classification catOw}). In Subsection~\ref{sec:conjectures on characters}, we study the usual characters as well as the $q$-characters of representations in $\catO^w$. We propose conjectures concerning the characters and their connection to projected limits discussed in Section~\ref{sec:limits of q characters} (Conjecture~\ref{conj:inductive limit and projected limit}). 

Finally, in Section~\ref{sec:further direction}, we suggest potential further research directions, including the relationship between $\catO$ and $\catO^w$ and the connection to shifted quantum affine algebras.

\subsection*{Acknowledgment}
The author wishes to express sincere gratitude to David Hernandez, who engaged in frequent discussions with the author, offering numerous insights that greatly enriched this work. Special thanks are also extended to Edward Frenkel for his valuable comments and for suggesting intriguing directions for further research.

%----------------------------------------------
%SECTION
%----------------------------------------------
\section{Preliminaries}\label{sec:preliminaries}
%----------------------subsection-------------------------%
\subsection{Quantum affine algebras}
Let $\mathfrak{g}$ be a finite-dimensional simple Lie algebra, and $\hat{\mathfrak{g}}$ be the corresponding non-twisted affine Lie algebra. The set of vertices of the finite Dynkin diagram is denoted by $I$, and the set of vertices of the affine Dynkin diagram is denoted by $\hat{I} = I \sqcup \{0\}$. Let $\widehat{C} = (C_{ij})_{i,j \in \hat{I}}$ be the Cartan matrix of $\hatg$. The Cartan matrix of $\g$ is the submatrix of $\widehat{C}$ with indices in $I$, and we denote this Cartan matrix by $C$. Let $d_i \in \N$ for $i \in \hat{I}$ so that the diagonal matrix $D = \mathrm{diag}(d_i)_{i \in \hat{I}}$ makes $D\widehat{C}$ symmetric.

Let $q \in \mathbb{C}^*$ not be a root of unity. For $m \in \N$, we use notation
\[q_i = q^{d_i}, \; [m]_{q_i} = \frac{q_i^m-q_i^{-m}}{q_i-q_i^{-1}}, \; [m]_{q_i}! = [m]_{q_i} [m-1]_{q_i} \cdots [1]_{q_i}, \quad \forall i \in \hat{I},\]
and we use the convention $[0]_q = 0$, $[0]_q! = 1$.

The quantum affine algebra $\Uqghat$ is the associative $\Ci$-algebra generated by $k_{i}^{\pm 1} , e_i,f_i \; (i \in \hat{I})$ with the quantum Weyl relations
	\begin{equation}\label{eq:quantumweylrelations}
		\begin{split}
			&k_i k_i^{-1} = k_i^{-1}k_i = 1, \; k_ik_j = k_j k_i,\\
			&k_i e_j = q_i^{\widehat{C}_{i,j}}e_j k_i, \; k_if_j = q_i^{-\widehat{C}_{i,j}}f_j k_i,\\
			&[e_i,f_j] = \delta_{i,j} \frac{k_i-k_i^{-1}}{q_i - q_i^{-1}}, \quad \forall i,j \in \hat{I},\\
		\end{split}
	\end{equation}
	and the quantum Serre relations
	\begin{equation}\label{eq:quantumserrerelations}
		\sum_{r=0}^{1-\widehat{C}_{i,j}} (-1)^r e_i^{(1-\widehat{C}_{i,j}-r)} e_j e_i^{(r)} = 0 = \sum_{r=0}^{1-\widehat{C}_{i,j}} (-1)^rf_i^{(1-\widehat{C}_{i,j}-r)} f_j f_i^{(r)},  \quad \forall i \neq j \in \hat{I},
	\end{equation}
	where $e_i^{(m)} = \frac{(e_i)^m}{[m]_{q_i}!}$ and $f_i^{(m)} = \frac{(f_i)^m}{[m]_{q_i}!}$ for $m \in \N$.

There is a unique $\hat{I}$-tuple of positive integers $(a_i)_{i \in \hat{I}}$ such that $a_{0} = 1$ and
\[\sum_{i \in \hat{I}} a_id_i\widehat{C}_{i,j} = 0 \text{ for all } j \in \hat{I}.\] 
These are determined by the multiplicity of simple roots in the longest root of $\g$: $\theta = \sum_{i=1}^n a_i \alpha_i$. Then $c := \prod_{i \in \hat{I}} k_i^{a_i}$ lies in the center of $\Uqghat$, and the quantum loop algebra $\Uqlg$ is defined to be the quotient of $\Uqghat$ by the ideal generated by $c-1$.

The algebras $\Uqghat$ and $\Uqlg$ can be defined as the affinization of finite type quantum groups through Drinfeld's realization.

\begin{theorem}[\cite{drinfeld1987new,beck1994braid}]  \label{thm:nontwisteddrinfeldrealizations}
	The non-twisted quantum loop algebra $\Uqlg$ is isomorphic to the algebra defined by generators $x_{i,k}^{\pm} (i \in I, k \in \mathbb{Z})$, $h_{i,k} (i \in I, k \in \mathbb{Z} \setminus \{0\})$, $k_i^{\pm1} (i \in I)$ and relations
	\begin{equation}\label{eq:nontwistedcommutationdrinfeld}
		\begin{split}
			&k_ik_i^{-1} = k_i^{-1}k_i = 1, \\
			&h_{i,k}h_{j,l}=h_{j,l}h_{i,k}, \; k_i h_{j,l} = h_{j,l}k_i, \; k_ik_j = k_j k_i,\\
			&k_ix_{j,k}^{\pm} = q_i^{\pm C_{i,j}} x_{j,k}^{\pm}k_i,\\
			&[h_{i,k}, x_{j,l}^{\pm}] = \pm \frac{1}{k}( [kC_{i,j}]_{q_i}) x_{j,k+l}^{\pm},\\
			&[x_{i,k}^+,x_{j,l}^-] =\delta_{i,j} (\frac{\phi_{i,k+l}^+ -  \phi_{i,k+l}^-}{q_i - q_i^{-1}}), \\
			&(u_1-q^{\pm C_{i,j}} u_2) x^{\pm}_i(u_1) x^{\pm}_j(u_2) = (q^{\pm C_{i,j}}u_1 - u_2) x^{\pm}_j(u_2) x^{\pm}_i(u_1),\quad \forall i,j \in I, \\
		\end{split}
	\end{equation}
	together with the quantum Drinfeld-Serre relations:
	\begin{equation}\label{eq:nontwistedquantumserredrinfeld}
		\sum_{\pi \in \mathfrak{S_m}} \sum_{r=0}^{m}(-1)^r \qbinom{1-C_{i,j}}{r}_{q_i} x_i^{\pm}(u_{\pi(1)}) \cdots x_i^{\pm}(u_{\pi(r)}) x_j^{\pm}(w) x_i^{\pm}(u_{\pi(r+1)}) \cdots x_i^{\pm}(u_{\pi(m)}) = 0, \; \forall i \neq j,
	\end{equation}
	
	where $m = 1- C_{i,j}$, $x^{\pm}_i(u) = \sum_{l \in \mathbb{Z}} x_{i,l}^{\pm} u^{-l}$, and the $\phi_{i,k}^{\pm}$ are defined by
	\[\phi_{i}^{\pm}(u) = \sum_{k=0}^{\infty} \phi_{i,\pm k}^{\pm} u^{\pm k} := k_i^{\pm 1} \exp(\pm (q_{i} - q_{i}^{-1}) \sum_{l=1}^{\infty} h_{i, \pm l}u^{\pm l}), \quad \forall i \in I.\]
\end{theorem}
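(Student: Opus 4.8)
The plan is to follow the braid-group strategy of Beck, realizing the Drinfeld loop generators inside the Chevalley--Serre presentation of $\Uqghat$ (generators $k_i^{\pm 1}, e_i, f_i$ indexed by $\hat I$) by means of Lusztig's automorphisms, and only at the end specializing the central element $c = 1$ to descend to $\Uqlg$. First I would set up Lusztig's braid group action: the automorphisms $T_i$ ($i \in \hat I$) of $\Uqghat$ satisfy the braid relations of the affine Weyl group $\hat W$, and they extend to the extended affine Weyl group $\widehat W = \mathcal T \ltimes \hat W$ by letting the diagram automorphisms $\tau \in \mathcal T$ act through permutation of the Chevalley generators. The crucial structural input is that $\widehat W$ contains the lattice of translations, so that for each $i \in I$ there is a translation element $t_i$ with an associated automorphism $T_{t_i}$ of $\Uqghat$.

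Second, I would \emph{define} the loop generators: set $x_{i,0}^+ = e_i$, $x_{i,0}^- = f_i$ and keep $k_i$ for $i \in I$, then produce the higher modes by repeatedly applying $T_{t_i}$ (with the standard sign and normalization) to the degree-zero elements, and extract the series $\phi_i^{\pm}(u)$, hence the imaginary root vectors $h_{i,k}$, from the commutators $[x_{i,k}^+, x_{i,0}^-]$ via the defining relation in \eqref{eq:nontwistedcommutationdrinfeld}.

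Third --- and this is the technical heart --- I would verify that these constructed elements satisfy every Drinfeld relation in \eqref{eq:nontwistedcommutationdrinfeld} and \eqref{eq:nontwistedquantumserredrinfeld}. Because the braid group acts by algebra automorphisms, the infinitely many relations among the modes collapse to finitely many identities: the $h$--$h$ commutativity and the $h$--$x$ and $\phi$--$x$ relations follow from the rank-one $\hatsltwo$ computation together with translation-equivariance, while the exchange relation and the Drinfeld--Serre relations \eqref{eq:nontwistedquantumserredrinfeld} reduce to computations inside the subalgebras of affine rank at most two, where they are checked by direct but lengthy calculation. Finally, I would assemble the isomorphism. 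The relations verified above show that the assignment defines an algebra homomorphism $\Phi$ from the abstractly Drinfeld-presented algebra to $\Uqghat$. For surjectivity, the image already contains $e_i, f_i, k_i$ for $i \in I$; one recovers the affine node by writing $k_0 = c \prod_{i \in I} k_i^{-a_i}$ and exhibiting $e_0, f_0$ as the normalized real root vectors attached to the highest root $\theta$ at nonzero loop degree, which lie in the subalgebra generated by the $x_{i,k}^{\pm}$. For injectivity, I would construct compatible triangular decompositions and PBW bases on both sides --- using a convex ordering of the affine root system on the Chevalley side and the ordered monomials in $x_{i,k}^{\pm}, h_{i,k}, k_i$ on the Drinfeld side --- and compare graded dimensions, so that $\Phi$ carries a basis to a basis. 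Setting $c = 1$ then yields the stated isomorphism for $\Uqlg$.

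The step I expect to be the main obstacle is the verification of the Drinfeld--Serre relations \eqref{eq:nontwistedquantumserredrinfeld} in the third step, together with the PBW basis needed for injectivity in the last step: both rest on delicate rank-two affine computations and on careful bookkeeping of the braid-group action on root vectors, which is where essentially all the genuine work of the theorem is concentrated.
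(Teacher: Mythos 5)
The paper gives no proof of this statement: it is quoted as a classical result with citations to Drinfeld and to Beck, so there is nothing internal to compare against. Your outline is an accurate summary of exactly the strategy of the cited reference of Beck --- loop generators built from the translation automorphisms $T_{t_i}$ in the extended affine braid group, imaginary root vectors extracted from $[x_{i,k}^+,x_{i,0}^-]$, reduction of the relations to rank-two affine computations, surjectivity via the real root vectors attached to $\theta$ at nonzero loop degree, and injectivity via compatible convex-order PBW bases --- so it takes essentially the same route the paper's citation points to.
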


We will be interested in the Borel subalgebra of $\Uqghat$, 

\begin{definition}
	The Borel subalgebra of $\Uqghat$ is the subalgebra generated by $e_i$, $k_i^{\pm}$, $\forall i \in \hat{I}$. We denote this subalgebra by $\Uqb$, and we call it the quantum affine Borel algebra.
\end{definition}

\begin{proposition}\cite[Theorem~4.21]{jantzen1996lectures}\label{prop:Borel def}
	 The quantum affine Borel algebra $\Uqb$ is isomorphic to the algebra defined by generators $e_i$, $k_i^{\pm}$, $i \in \hat{I}$ and relations
	 \[\begin{split}
	 	&k_i k_i^{-1} = k_i^{-1}k_i = 1, \; k_ik_j = k_j k_i, \; k_i e_j = q_i^{\widehat{C}_{i,j}}e_j k_i, \quad \forall i,j \in \hat{I},\\
	 	&\sum_{r=0}^{1-\widehat{C}_{i,j}} (-1)^r e_i^{(1-\widehat{C}_{i,j}-r)} e_j e_i^{(r)} = 0, \quad \forall i \neq j.
	 \end{split}\]
\end{proposition}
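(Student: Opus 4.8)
The statement asserts that the subalgebra $\Uqb \subset \Uqghat$ generated by $e_i, k_i^{\pm 1}$ ($i \in \hat{I}$) carries no relations beyond those inherited from the ``positive'' and Cartan parts of the defining relations \eqref{eq:quantumweylrelations}--\eqref{eq:quantumserrerelations}. The plan is to set up the obvious presentation map and reduce everything to injectivity. Write $B$ for the abstract algebra presented by the generators and relations in the statement. All of these relations hold among the corresponding elements of $\Uqghat$, so there is a well-defined algebra homomorphism $\phi\colon B \to \Uqghat$ sending each generator to the element of the same name; its image is by definition $\Uqb$, so $\phi\colon B \twoheadrightarrow \Uqb$ is surjective. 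The whole content is therefore the injectivity of $\phi$.

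First I would analyze $B$ intrinsically. Its relations split into a Cartan part, the subalgebra $U^0_B$ generated by the commuting invertible $k_i^{\pm 1}$; a positive part $U^+_B$ generated by the $e_i$ subject only to the quantum Serre relations; and the cross relations $k_i e_j = q_i^{\widehat{C}_{i,j}} e_j k_i$. Since each cross relation lets one move every $k_i$ past every $e_j$ at the cost of a scalar, the multiplication map $U^+_B \otimes U^0_B \to B$ is surjective, and a normal-form argument (pushing all torus elements to the right) upgrades this to a linear isomorphism $B \cong U^+_B \otimes U^0_B$, \emph{provided} one knows that $U^0_B$ is a genuine Laurent polynomial algebra $\Ci[k_i^{\pm 1}]_{i \in \hat{I}}$ and that $U^+_B$ does not degenerate.

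Next I would compare this decomposition with $\Uqghat$. From the Drinfeld--Jimbo presentation one has the triangular decomposition $U^- \otimes U^0 \otimes U^+ \xrightarrow{\ \sim\ } \Uqghat$ given by multiplication, where $U^0 = \Ci[k_i^{\pm 1}]$ and $U^{\pm}$ are generated by the $e_i$, respectively the $f_i$; moreover $U^+$ is itself presented by the $e_i$ subject to the quantum Serre relations alone. This is exactly the structural input recorded in \cite[Theorem~4.21]{jantzen1996lectures}. Granting it, $\phi$ restricts to an isomorphism $U^0_B \xrightarrow{\sim} U^0$ (the $k_i$ are linearly independent monomials in the torus) and to an isomorphism $U^+_B \xrightarrow{\sim} U^+$ (identical presentations). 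Since $\Uqb = U^0 \cdot U^+$ inside $\Uqghat$ and the triangular decomposition guarantees that multiplication $U^+ \otimes U^0 \to \Uqghat$ is injective, the map $\phi$ fits into a commuting square of isomorphisms $U^+_B \otimes U^0_B \to U^+ \otimes U^0$; hence $\phi$ is injective, which proves the claim.

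The genuinely nontrivial inputs here are (i) the presentation of the positive subalgebra $U^+$ by the quantum Serre relations alone, and (ii) the triangular decomposition of $\Uqghat$, both of which belong to the standard PBW structure theory (via Lusztig's construction of $U^+$ and its PBW basis). I therefore expect the main obstacle \emph{not} to be reproving these theorems, but rather verifying that no collapse occurs when the torus and positive parts are amalgamated, i.e. that the normal-form argument for $B$ is genuinely compatible with the triangular decomposition of $\Uqghat$. Concretely, I would isolate the assertion ``$U^+_B \to U^+$ is an isomorphism'' as the crux and derive injectivity of $\phi$ formally from it together with the freeness of the torus.
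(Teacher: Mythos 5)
The paper gives no proof of this proposition beyond citing \cite[Theorem~4.21]{jantzen1996lectures}, and your argument — presenting the abstract algebra $B$, splitting it as $U^+_B \otimes U^0_B$ via the cross relations, and matching this against the triangular decomposition of $\Uqghat$ together with the Serre presentation of $U^+$ — is precisely the standard proof that the cited theorem rests on. Your reduction is correct and correctly isolates the two nontrivial inputs, so there is nothing to add.
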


The Borel subalgebra of $\Uqghat$ contains Drinfeld generators $x^{+}_{i,m}$, $x^-_{i,k}$, $h_{i,k}$ ($i \in I, k \geq 1, m \geq 0$). In particular, it contains $\phi^+_{i,m}$ ($i \in I, m \geq 0$).

%----------------------subsection-------------------------%
\subsection{Representations and characters}\label{subsec:recall q char}
Let $\mathfrak{h}$ be the Cartan subalgebra of the finite-dimensional Lie algebra $\g$. Let $\omega_i \in \mathfrak{h}^*$ ($i \in I$) be the fundamental weights of $\g$, and let $\alpha_i \in \mathfrak{h}^*$ ($i \in I$) be the simple roots of $\g$. Recall that we have a Weyl group invariant inner product on $\mathfrak{h}^*$ such that $\langle \alpha_i, \omega_j \rangle = \delta_{i,j} d_i$. Denote by $P = \oplus_{i \in I} \Z \omega_i$ the weight lattice of $\g$.

For a finite-dimensional representation $V$ of $\Uqghat$, an element $\lambda \in \mathfrak{h}^*$ is called a weight of $V$ if the space
\[V_{\lambda} := \{v \in V | k_i v = q^{\langle \alpha_i,\lambda \rangle}v \}\]
is non zero. Then any weight $\lambda$ of $V$ is an element in the weight lattice $P$. Remark that when studying finite-dimensional representations, we are using the weights of $\g$ instead of those of $\hatg$.

As usual, we denote by $\Z[e^{\lambda}]_{\lambda \in P}$ the ring of finite sums $\sum_{\lambda \in P} c_{\lambda}e^{\lambda}$, where $c_{\lambda} \in \Z$ and the multiplication is given by $e^{\lambda_1} e^{\lambda_2} = e^{\lambda_1+\lambda_2}$. Recall that for any finite-dimensional representation $V$ of $\Uqghat$, its usual character is defined to be
\[\chi(V) = \sum_{\lambda \in P} \dim(V_{\lambda})e^{\lambda} \in \Z[e^{\lambda}]_{\lambda \in P}.\]

Then we recall the $q$-characters of finite-dimensional representations of $\Uqghat$.

For any $\Uqghat$-module $V$, a set of complex numbers $\bpsi : = (\psi_{i,\pm m}^{\pm})_{i \in I, m \in \N}$, is called an $l$-weight of $V$ if the space 
$\{v \in V | \phi_{i,\pm m}^{\pm}.v = \psi_{i,\pm m}^{\pm}v \}$ is non zero.

An $l$-highest weight representation of $\Uqghat$ is a $\Uqghat$-module $V$ generated by a vector $v$ such that 
	\[x_{i,k}^+.v = 0, \; \phi_{i,\pm m}^{\pm}.v = \psi_{i,\pm m}^{\pm}v, \quad \forall i \in I, k \in \Z, m \in \N, \]
for some complex numbers $\psi_{i,\pm m}^{\pm} \in \Ci$ ($i \in I, m \in \N$). We call such $\bpsi = (\psi_{i,\pm m}^{\pm})_{i \in I, m \in \N}$ a highest $l$-weight of $V$.

For any $l$-weight $\bpsi$, notice that $\phi_{i,0}^{+} = k_i$ are invertible, thus the constant terms $\psi_{i,0}^{+}$ are non-zero complex numbers. Let $(\lambda_i)_{i \in I} \in \Ci^I$ so that $\psi_{i,0} = q_i^{\lambda_i}$. We call $\sum_{i \in I} \lambda_i \omega_i \in \mathfrak{h}^*$ the weight of $\bpsi$, denoted by $\mathrm{wt}(\bpsi)$.

The finite-dimensional irreducible representations of $\Uqghat$ are classified by $l$-highest weight representations.

\begin{theorem}[\cite{chari1991quantum,chari1995quantum}]
	Any finite-dimensional irreducible representation of $\Uqghat$ is an $l$-highest weight representation. 
	
	Conversely, an $l$-highest weight representation is finite-dimensional if and only if its highest $l$-weight $\bpsi$ verifies:
	there exist polynomials $(P_i)_{i \in I} \in \mathbb{C}[u]^I$ such that $\forall i \in I$, $P_i(0)=1$ and
	
	\begin{equation}
		\sum_{m = 0}^{\infty} \psi_{i,m}^+ u^m = q_i^{\deg(P_i)} \frac{P_i(uq_i^{-1})}{P_i(uq_i)} = \sum_{m = 0}^{\infty} \psi_{i,-m}^- u^{-m},
	\end{equation}
	where we expand the rational function at $0$ and at $\infty$ respectively.
	
	We denote by $L(\bpsi)$ the unique irreducible $l$-highest weight representation of highest $l$-weight $\bpsi$.
\end{theorem}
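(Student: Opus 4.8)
The plan is to treat the two assertions separately and, for the harder characterization, to reduce everything to the rank-one quantum loop algebra attached to $\sltwo$.

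First I would establish that every finite-dimensional irreducible $V$ is $l$-highest weight. Decompose $V = \bigoplus_\lambda V_\lambda$ into its weight spaces for the commuting $k_i$; since $\dim V < \infty$ the set of weights is a finite subset of $P$, so there is a weight $\lambda$ maximal for the standard partial order, meaning $V_{\lambda + \alpha_i} = 0$ for all $i \in I$. The elements $\phi_{i,\pm m}^\pm$ pairwise commute and preserve each $V_\lambda$, hence over $\Ci$ they possess a common eigenvector $v \in V_\lambda$; maximality of $\lambda$ forces $x_{i,k}^+ v \in V_{\lambda + \alpha_i} = 0$, so $v$ is an $l$-highest weight vector, and irreducibility gives $V = \Uqlg\, v$. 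For uniqueness I would argue as for Verma modules: any $l$-highest weight module of highest $l$-weight $\bpsi$ is a quotient of a universal $l$-highest weight module $M(\bpsi)$ whose top weight space is one-dimensional; a proper submodule cannot contain the top weight vector, so it meets $M(\bpsi)_\lambda$ trivially, and the sum of all such submodules is the unique maximal proper submodule, whence the irreducible quotient $L(\bpsi)$ is unique up to isomorphism.

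For the characterization I would restrict to rank one. For each $i \in I$ the generators $x_{i,k}^\pm, h_{i,k}, k_i$ span a subalgebra isomorphic to the quantum loop algebra of $\sltwo$, and the condition at $i$ only involves the single series $\psi_i^\pm(u) = \sum_m \psi_{i,\pm m}^\pm u^{\pm m}$, so it suffices to analyse this rank-one restriction. For necessity, apply the lowering operators $x_{i,l}^-$ to the highest $l$-weight vector $v$; finiteness of $\dim V$ forces linear dependences among the resulting vectors, and transporting such a dependence through the relation $[x_{i,k}^+, x_{i,l}^-] = (\phi_{i,k+l}^+ - \phi_{i,k+l}^-)/(q_i - q_i^{-1})$ yields a linear recursion among the coefficients $\psi_{i,m}^+$. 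A formal series whose coefficients eventually satisfy a fixed linear recursion is the Taylor expansion of a rational function, and matching the expansions of $\psi_i^+(u)$ at $0$ and of $\psi_i^-(u)$ at $\infty$ as the two expansions of one and the same rational function, together with $\psi_{i,0}^+ = q_i^{\lambda_i}$, pins down the form $q_i^{\deg P_i} P_i(uq_i^{-1})/P_i(uq_i)$ with $P_i(0)=1$ and $\deg P_i = \lambda_i$, where $\mathrm{wt}(\bpsi) = \sum_i \lambda_i \omega_i$; in particular each $\lambda_i \in \N$, so $\mathrm{wt}(\bpsi)$ is dominant.

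For sufficiency, factor $P_i(u) = \prod_j (1 - a_{i,j}u)$. For every pair $(i,a)$ there is a finite-dimensional fundamental module $V(\omega_i)_a$ whose highest $l$-weight corresponds to the Drinfeld data $P_i = 1 - au$ and $P_{i'} = 1$ for $i' \neq i$; forming the tensor product $W = \bigotimes_{i,j} V(\omega_i)_{a_{i,j}}$ gives a finite-dimensional module in which the tensor product of the highest weight vectors is $l$-highest weight with highest $l$-weight exactly $\bpsi$, so $L(\bpsi)$ is the irreducible quotient of the submodule of $W$ generated by this vector and is therefore finite-dimensional. The \emph{main obstacle} is the existence of the fundamental modules $V(\omega_i)_a$ realizing arbitrary spectral parameters for general $\g$: in type $A$ they arise from evaluation homomorphisms to $\Uqg$, but in general their construction is the genuinely hard input. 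The rank-one rationality argument must also be carried out with care, so that a single rational function simultaneously governs both the expansion at $0$ and that at $\infty$.
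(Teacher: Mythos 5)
The paper does not prove this statement; it is quoted verbatim from the cited references \cite{chari1991quantum,chari1995quantum}, so there is no internal proof to compare against. Your outline is essentially the standard Chari--Pressley argument from those references (maximal weight plus a common eigenvector of the commuting $\phi_{i,\pm m}^{\pm}$ for the first part; rank-one reduction and rationality for necessity; tensor products of fundamental modules for sufficiency), and it is correct as a proof plan, with the genuinely hard inputs --- existence of the fundamental modules $V(\omega_i)_a$ for general $\g$, and the fact that the coproduct of $\phi^{\pm}_{i,k}$ acts on a tensor product of highest weight vectors by the product of the $l$-weights --- properly identified rather than glossed over.
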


Introducing the notation 
	\begin{equation}\label{eq:variable Y}
		Y_{i,a} = (1, \cdots, q_i\frac{1-uaq_i^{-1}}{1-uaq_i}, \cdots, 1) \in \Ci \llbracket u \rrbracket^I, \quad \forall i \in I, a \in \Ci^*
	\end{equation}
with the rational function at the $i$-th factor. Then by the above theorem, the highest $l$-weight $\bpsi$ of an irreducible finite-dimensional representation is a monomial in $Y_{i,a}$ ($i \in I, a \in \Ci^*$). Moreover, it is proved that any $l$-weight of a finite-dimensional representation of $\Uqghat$ can be written as a monomial in $Y_{i,a}^{\pm 1}$, $i \in I, a \in \Ci^*$ \cite{frenkel1999q}. 

The $q$-characters of finite-dimensional $\Uqghat$-modules were defined by E.~Frenkel and N.~Reshetikhin (and their Yangian analogues were defined by H.~Knight) \cite{frenkel1999q,knight1995spectra}. Let $\ringYa$ be the Laurent polynomial ring generated by formal symbols $Y_{i,a}^{\pm 1}$ ($i \in I, a \in \Ci^*$). For an $l$-weight $\bpsi$, denote by $[\bpsi]$ the corresponding monomial in $Y_{i,a}^{\pm 1}$ in $\ringYa$. For a finite-dimensional $\Uqghat$-module $V$, the $q$-character of $V$ is defined as
	\[\chi_q(V) = \sum_{\bpsi \in L} \dim(V_{\bpsi})[\bpsi] \in \ringYa,\]
where $L$ is the set of all $l$-weights of $V$, and
	\[V_{\bpsi} := \{v \in V \vert \exists p \geq 0, \forall i \in I,m \geq 0, (\phi_{i,\pm m}^{\pm} - \psi_{i,\pm m}^{\pm})^p.v = 0 \}.\]

In fact, the $q$-characters $\chi_q$ for finite-dimensional representations of $\Uqghat$ define a ring homomorphism from the Grothendieck ring $\mathrm{Rep}(\Uqghat)$ of the category of finite-dimensional representations to the commutative Laurent polynomial ring
\[\chi_q: \mathrm{Rep}(\Uqghat) \to \ringYa.\]

With the following monomials $A_{i,a}$ in $\ringYa$,
\begin{equation}\label{eq: A in terms of Y}
	A_{i,a} = Y_{i,aq_i^{-1}}Y_{i,aq_i}\prod_{j:C_{ji}= -1}Y^{- 1}_{j,a}  \prod_{j:C_{ji}= -2}Y^{- 1}_{j,aq^{-1}}Y^{- 1}_{j,aq}  \prod_{j:C_{ji}= -3}Y^{- 1}_{j,aq^{-2}}Y^{- 1}_{j,a}Y^{- 1}_{j,aq^2},
\end{equation}
it is proved that

\begin{theorem}[{\cite{frenkel1999q,frenkel2001combinatorics}}]
	Let $M$ be a monomial in $Y_{i,a}$, then every $l$-weight of the finite-dimensional $l$-highest weight representation $L(M)$ is a monomial in in $M\ringAa$.
	
	Furthermore, for a fixed non-zero complex number $a \in \Ci^*$, let $M$ be a monomial in variables $Y_{i,aq^n}$, $i \in I, n \in \Z$, and let $V$ be an $l$-highest weight module of highest $l$-weight $M$. Then the $q$-character of $V$ is an element in $M \ringA$.
\end{theorem}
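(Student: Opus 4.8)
The plan is to prove the two assertions separately, reducing both to the rank-one case $\Uqlg$ of type $\sltwo$ and then propagating the information to higher rank. Write $Q$ for the free abelian group on the symbols $Y_{i,a}$ ($i \in I$, $a \in \Ci^*$), so that $\ringYa = \Z[Q]$, and let $G \subseteq Q$ be the subgroup generated by the monomials $A_{i,a}$ of \eqref{eq: A in terms of Y}. By the previously cited theorem every $l$-weight of a finite-dimensional $\Uqghat$-module is a monomial in $Y_{i,a}^{\pm 1}$, so the first assertion is equivalent to showing that every $l$-weight $m$ of $L(M)$ lies in the single coset $MG$, i.e. $mM^{-1} \in G$.

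First I would settle the case $\g = \sltwo$. Here the finite-dimensional simple modules are classified by their Drinfeld polynomials, which factor into $q$-strings, so that $L(M)$ is a tensor product of Kirillov--Reshetikhin modules; a direct computation (modelled on the two-dimensional case $\chi_q(L(Y_a)) = Y_a + Y_{aq^2}^{-1} = Y_a(1 + A_{aq}^{-1})$, using $A_{aq} = Y_a Y_{aq^2}$) shows that each $l$-weight of $L(M)$ equals $M$ times a product of the $A_a = Y_{aq^{-1}}Y_{aq}$, establishing the claim in rank one. For general $\g$ I would argue by induction on the height of $\mathrm{wt}(M) - \mathrm{wt}(m)$; the same induction in fact applies to any finite-dimensional $l$-highest weight module, which is what the second assertion will need. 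Since $V$ is $l$-highest weight and $m \neq M$, one finds a node $i \in I$ and an $l$-weight $m'$ with $\mathrm{wt}(m') = \mathrm{wt}(m) + \alpha_i$ lying, together with $m$, in a common simple summand $W$ of the restriction of $V$ to the subalgebra $\hatsltwo^{(i)}$ generated by $x_{i,k}^{\pm}, h_{i,k}, k_i^{\pm 1}$. By the rank-one case applied to $W$, the $Y_{i,\bullet}$-exponents of $m$ and $m'$ differ by a product of $A^{(i)}_{i,c} = Y_{i,cq_i^{-1}}Y_{i,cq_i}$. The main obstacle is then to promote this to an equality in the full group $Q$: the restriction to $\hatsltwo^{(i)}$ only controls the exponents of the $Y_{i,\bullet}$, whereas the Drinfeld relation $[h_{j,k}, x_{i,l}^{\pm}] = \pm \tfrac{1}{k}[kC_{j,i}]_{q_j}\, x_{i,k+l}^{\pm}$ shows that the operators generating $W$ also move the eigenvalues of $\phi_j^{\pm}(u)$ at the neighbours $j$ of $i$. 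The heart of the argument is to verify that these induced changes at the neighbouring nodes are \emph{exactly} the factors $\prod_{j : C_{ji} = -1} Y_{j,c}^{-1}\cdots$ occurring in the full monomial $A_{i,c}$, so that $m' m^{-1} = \prod_c A_{i,c}^{\pm 1} \in G$; combined with the inductive hypothesis $m' \in MG$ this yields $m \in MG$.

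For the second assertion I would refine the first using the decomposition of $Q$ according to the $q^{\Z}$-orbits of spectral parameters. Each generator $A_{i,c}$ lies entirely in the subgroup $Q_{[c]}$ spanned by those $Y_{j,c'}$ with $c' \in cq^{\Z}$, so $G = \bigoplus_{[c]}(G \cap Q_{[c]})$ respects this grading, and by hypothesis $M \in Q_{[a]}$. By the first assertion (applied to the finite-dimensional $l$-highest weight module $V$) every $l$-weight of $V$ has the form $M\prod A_{i,c}^{n_{i,c}}$; projecting to the component $Q_{[c]}$ with $[c] \neq [a]$ and using that the loop grading of $\Uqlg$ forces the generalized eigenvalues of $\phi_j^{\pm}(u)$ on $V$ to be rational functions whose zeros and poles lie in $aq^{\Z}$, I would conclude that only factors $A_{i,c}$ with $c \in aq^{\Z}$ can occur. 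Hence every $l$-weight lies in $M \cdot \langle A_{i,aq^n} \rangle$, and since $V$ is finite-dimensional its $q$-character is a finite $\Z$-combination of such monomials, that is, an element of $M\ringA$, as required.
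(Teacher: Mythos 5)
This statement is quoted in the paper as a known result with citations to Frenkel--Reshetikhin and Frenkel--Mukhin; the paper itself contains no proof of it, so there is nothing internal to compare against. Measured against the proofs in those references, your outline is essentially the standard argument: rank-one reduction via the string decomposition of Drinfeld polynomials, induction on the height of $\mathrm{wt}(M)-\mathrm{wt}(m)$ using the restriction to the copy of $\Uqlg$ of type $\sltwo$ at each node, and the spectral-parameter grading for the second assertion. Three points would need tightening before this is a proof rather than a roadmap. First, the restriction of $V$ to $\hatsltwo^{(i)}$ is not semisimple, so ``common simple summand'' should be replaced by the Frenkel--Reshetikhin statement that $\chi_q(V)$, viewed at node $i$, decomposes into $\sltwo$-strings (equivalently, work with composition factors of the restriction); similarly, for $\g=\sltwo$ one should say $L(M)$ is a subquotient of a tensor product of evaluation modules if one wants to avoid invoking the general-position hypothesis, though for $\sltwo$ the tensor-product statement is in fact available. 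Second, the step you correctly single out as ``the heart of the argument'' --- that applying $x_{i,k}^{-}$ multiplies the eigenvalues of $\phi_j^{\pm}(u)$ at neighbouring nodes $j$ by exactly the factors appearing in $A_{i,c}^{-1}$ --- is deferred rather than carried out; it is a true and standard computation with the relation $[h_{j,l},x_{i,k}^{-}]$, but it is precisely where the content of the theorem lies. Third, for the second assertion, the claim that all zeros and poles of the $\phi_j^{\pm}(u)$-eigenvalues lie in $aq^{\Z}$ does not follow from the loop grading alone; it is the block/spectral-character lemma, proved by the same induction (the node-$i$ string structure of $m'$ forces $c\in aq^{\Z}$ at each step), and it is cleaner to extract it from the induction of part one than to project onto the components $Q_{[c]}$ afterwards. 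None of these is a wrong turn, but each is a place where the proposal asserts rather than proves.
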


When $M$ is a monomial in  $Y_{i,aq^n}$ ($i \in I, n \in \Z$), the normalized $q$-character of an $l$-highest weight representation $V$ of highest $l$-weight $M$ is defined to be 
\[\widetilde{\chi}_q (V) = \frac{\chi_q(V)}{M},\]
which is an element in the ring $\ringA$.

%----------------------subsection-------------------------%
\subsection{Braid groups and Weyl groups}
Let $W$ be the finite Weyl group associated to $\mathfrak{g}$ and $\mathcal{B}$ the corresponding braid group. Recall that $\mathcal{B}$ is the group generated by elements $T_i$, $i \in I$, with relations
\begin{align*}
	T_i T_j = T_j T_i, & \text{ if } C_{ji}C_{ij} = 0, \\
	T_i T_j T_i = T_j T_i T_j, & \text{ if } C_{ji}C_{ij} = 1, \\
	T_i T_j T_i T_j = T_j T_i T_j T_i, & \text{ if } C_{ji}C_{ij} = 2, \\
	(T_i T_j)^3 = (T_j T_i)^3, & \text{ if } C_{ji}C_{ij} = 3, \quad \forall i,j \in I. \\
\end{align*}

The Weyl group $W$ is generated by elements $s_i$, $i \in I$, with the same relations as $T_i$ above, together with the relations $s_i^2 = 1$, $\forall i \in I$.

Clearly, the map $T_i \mapsto s_i$, $\forall i \in I$, generates a group homomorphism from $\mathcal{B}$ to $W$. Therefore, any Weyl group action is naturally a braid group action.

There is a well-defined injective map from $W$ to $\mathcal{B}$, which associates an element $w \in W$ with the element $T_{i_1} \cdots T_{i_r}$ for a reduced expression $w = s_{i_1} \cdots s_{i_r}$, where $r = l(w)$ is the length of $w$. This element in $\mathcal{B}$ is independent of the choice of the reduced expression, and is denoted by $T_w$. Moreover, for two elements $w, w' \in W$, $T_w T_{w'} = T_{ww'}$ if and only if $l(w) + l(w') = l(ww')$.

Let $\omega_i^{\vee} \in \mathfrak{h}$ be fundamental coweights of $\g$, and $P^{\vee} = \oplus_{i \in I} \Z \omega_i^{\vee}$ the coweight lattice. Let $\widehat{W} := W \ltimes P^{\vee}$ be the extended affine Weyl group and $\widehat{\mathcal{B}}$ be the corresponding extended affine braid group. Recall that $\widehat{W}$ is generated by simple reflections $s_i$, $i \in \hat{I}$, together with diagram automorphisms $\tau$ \cite[Chapter~VI]{bourbaki2002lieIV}.

%----------------------------------------------
%SECTION
%----------------------------------------------
\section{Limits of \texorpdfstring{$q$}{}-characters}\label{sec:limits of q characters}
In this section, we begin with a review of the braid group actions on $l$-weights. Then, we define various topological completions of the ring of $q$-characters. Finally, we explore a phenomenon of convergence of $q$-characters.

%----------------------subsection-------------------------%

\subsection{Braid group actions on \texorpdfstring{$l$}{}-weights}\label{subsec:Chari braid actions}
In \cite{bouwknegt1998deformed,chari2002braid}, a $\mathcal{B}$-action on the commutative ring $\ringYa$ is defined. 

 \begin{definition}\label{def:braid action on ringY}
	The braid group $\mathcal{B}$ acts on the ring $\ringYa$ by
	\begin{equation}\label{eq:B action on Y}
		\begin{split}
			& T_i Y^{\pm 1}_{j,a} = Y^{\pm 1}_{j,a},\text{ if } i \neq j,\\
			& T_i Y^{\pm 1}_{i,a} = Y^{\mp 1}_{i,aq_i^2} \prod_{j:C_{ji}= -1}Y^{\pm 1}_{j,aq_i}  \prod_{j:C_{ji}= -2}Y^{\pm 1}_{j,aq}Y^{\pm 1}_{j,aq^3}  \prod_{j:C_{ji}= -3}Y^{\pm 1}_{j,aq}Y^{\pm 1}_{j,aq^3}Y^{\pm 1}_{j,aq^5}.
		\end{split}
	\end{equation}
\end{definition}

With the notation $A_{i,a}$ in \eqref{eq: A in terms of Y}, the braid group action \eqref{eq:B action on Y} can be written as 
\begin{equation}
	\begin{split}
		& T_i Y^{\pm 1}_{j,a} = Y^{\pm 1}_{j,a}, \text{ if } i \neq j,\\
		& T_i Y^{\pm 1}_{i,a} = Y^{\pm 1}_{i,a} A^{\mp 1}_{i,aq_i}.
	\end{split}
\end{equation}

Moreover, the $\mathcal{B}$-action preserves the subring $\ringAa$ of $\ringYa$, and the action is given by: $\forall i, j \in I$,
\begin{equation}
	\begin{split}
		& T_i A^{\pm 1}_{j,a} = A^{\pm 1}_{j,a}, \text{ if } C_{ij} = 0, \\
		& T_i A^{\pm 1}_{j,a} = A^{\pm 1}_{j,a} A^{\pm 1}_{i,aq_i}, \text{ if } C_{ij} = -1, \\
		& T_i A^{\pm 1}_{j,a} = A^{\pm 1}_{j,a} A^{\pm 1}_{i,aq} A^{\pm 1}_{i,aq^3}, \text{ if } C_{ij} = -2, \\
		& T_i A^{\pm 1}_{j,a} = A^{\pm 1}_{j,a} A^{\pm 1}_{i,aq} A^{\pm 1}_{i,aq^3} A^{\pm 1}_{i,aq^5}, \text{ if } C_{ij} = -3, \\
		& T_i A^{\pm 1}_{i,a} = A^{\mp 1}_{i,aq_i^2}.
	\end{split}
\end{equation}

Let $\mathfrak{r}$ be the multiplicative group of $I$-tuple of rational functions which are regular and non-zero at $0$. This braid group action is extended by E.~Frenkel and D.~Hernandez to any $l$-weights $\bpsi \in \mathfrak{r}$ \cite[Definition~3.4]{frenkel2023extended}. Let us recall the definition here. 

\begin{definition}\label{def:barid group action on psi}
	The braid group $\mathcal{B}$ acts as group isomorphisms on $\mathfrak{r}$: $\forall i \in I$, $a \in \Ci^*$, and $\lambda \in \mathfrak{h}^* \simeq (\Ci^*)^I$
	\begin{equation}
		\begin{split}
			&T_i \bpsi_{j,a}= \bpsi_{j,a}, \text{ if } i \neq j,\\
			&T_i \bpsi_{i,a}= \bpsi_{i,aq_i^2}^{-1}	\prod_{j:C_{ij}= -1} \bpsi_{j,aq_i}  \prod_{j:C_{ij}= -2} \bpsi_{j,a}\bpsi_{j,aq^2}  \prod_{j:C_{ij}= -3} \bpsi_{j,aq^{-1}}\bpsi_{j,aq}\bpsi_{j,aq^3},\\
			&T_i \lambda = s_i \lambda, 
		\end{split}
	\end{equation}
	where
	\begin{equation}\label{eq:psi pm}
		\bpsi_{i,a}^{\pm 1} = (1,\cdots,(1-au)^{\pm 1},\cdots,1)
	\end{equation}
	with the rational function at the $i$-th factor.
\end{definition}

We note that this defines a braid group action, but not a Weyl group action.

By formula \eqref{eq:variable Y}, each monomial in $\ringYa$ is identified with an element in $\mathfrak{r}$. The above braid group action preserves the set of monomials in $\ringYa$, and the action coincides with the formula \eqref{eq:B action on Y}.

\begin{remark}
	One should pay attention to the index of Cartan matrices in the above actions. $C_{ij}$ and $C_{ji}$ are different in non-simply-laced cases.
\end{remark}

\begin{remark}
	We remark that the formulae written in \cite[Definition~3.4]{frenkel2023extended} are in fact the inverse of $T_i$ in Definition~\ref{def:barid group action on psi}, which are given by
	\[\begin{split}
		&T_i^{-1} \bpsi_{j,a}= \bpsi_{j,a}, \text{ if } i \neq j,\\
		&T_i^{-1} \bpsi_{i,a}= \bpsi_{i,aq_i^{-2}}^{-1}	\prod_{j:C_{ij}= -1} \bpsi_{j,aq_i^{-1}}  \prod_{j:C_{ij}= -2} \bpsi_{j,a}\bpsi_{j,aq^{-2}}  \prod_{j:C_{ij}= -3} \bpsi_{j,aq}\bpsi_{j,aq^{-1}}\bpsi_{j,aq^{-3}}.
	\end{split}
	\]
\end{remark}

The following theorem of Chari states the relation between the braid group action on $\mathfrak{r}$ and $q$-characters.

    \begin{theorem}[{\cite[Proposition~4.1]{chari2002braid}}]\label{thm:Chari l weight}
        Let $M$ be a monomial in $Y_{i,a}$, $i \in I, a \in \Ci^*$, and let $L(M)$ be the simple finite-dimensional $l$-highest weight representation of highest $l$-weight $M$. Then $\forall w \in W$, $T_wM$ appears as a monomial in $\chi_q(L(M))$. 
    
        More precisely, if $\lambda \in \mathfrak{h}^*$ is the weight of $M$, then $w\lambda$ are also weights in $L(M)$, and the weight spaces $L(M)_{w\lambda}$ of $L(M)$ of weight $w\lambda$ are one-dimensional $\forall w \in W$. In particular, vectors in $L(M)_{w\lambda}$ are eigenvectors of $\phi_i^{\pm}(u) \in \Uqhaffine$. The associated eigenvalues of vectors in $L(M)_{w\lambda}$ are given by $T_wM \in \mathbb{C}\llbracket u \rrbracket^{I}$.
    \end{theorem}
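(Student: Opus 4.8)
The plan is to reduce the statement to two ingredients: the Weyl group invariance of the ordinary $\g$-character, which yields the one-dimensionality and the eigenvector assertions at no cost, and a rank-one ($\hatsltwo$) computation identifying the actual $l$-weight, which I organize into an induction on $\ell(w)$ along a reduced expression.

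First I would dispose of the easy parts. Restricting $L(M)$ to the finite-type quantum group $\Uqg$ (generated by $e_i,f_i,k_i^{\pm1}$, $i\in I$) gives a finite-dimensional module whose character $\chi(L(M))$ is $W$-invariant. Since $M$ is a monomial in the $Y_{i,a}$, its weight $\lambda=\sum_{i\in I}(\deg_i M)\,\omega_i$ is dominant and $\dim L(M)_\lambda=1$; hence $\dim L(M)_{w\lambda}=\dim L(M)_\lambda=1$ for every $w\in W$. The operators $\phi_i^{\pm}(u)\in\Uqhaffine$ preserve each weight space and commute with one another, so they act by scalars on the one-dimensional space $L(M)_{w\lambda}$; any nonzero vector there is automatically a simultaneous eigenvector, i.e.\ an $l$-weight vector. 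It then remains only to compute its $l$-weight and to check it equals $T_wM$.

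Next I would set up the induction. Fix a reduced expression $w=s_{i_1}\cdots s_{i_r}$ and put $\mu_k=s_{i_k}s_{i_{k+1}}\cdots s_{i_r}\lambda$, so that $\mu_{r+1}=\lambda$ and $\mu_1=w\lambda$. A standard property of reduced words for dominant $\lambda$ gives $m_k:=\langle\alpha_{i_k}^\vee,\mu_{k+1}\rangle\ge0$; moreover, since $\mu_{k+1}\in W\lambda$ has maximal norm among the weights of $L(M)$ and $m_k\ge0$, the weight $\mu_{k+1}+\alpha_{i_k}$ has strictly larger norm and therefore does not occur, so a generator $v_{k+1}$ of $L(M)_{\mu_{k+1}}$ is annihilated by all $x_{i_k,l}^{+}$, i.e.\ it is a highest-weight vector for the rank-one subalgebra $\hat U_{i_k}\cong\mathcal{U}_{q_{i_k}}\hatsltwo$ generated by $x_{i_k,l}^{\pm},h_{i_k,l},k_{i_k}^{\pm1}$. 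I would then prove the inductive claim: if $v_{k+1}$ has (monomial) $l$-weight $N_{k+1}$, then $\mu_k=s_{i_k}\mu_{k+1}=\mu_{k+1}-m_k\alpha_{i_k}$ has one-dimensional weight space of $l$-weight $T_{i_k}N_{k+1}$. Iterating from $N_{r+1}=M$ down to $k=1$ produces $l$-weight $T_{i_1}\cdots T_{i_r}M=T_wM$, which is independent of the reduced expression because $T_w$ is well defined in $\mathcal{B}$.

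The heart of the argument, and the step I expect to be the main obstacle, is this rank-one computation. For the $i_k$-component of the $l$-weight it is pure $\hatsltwo$-representation theory: the $\hat U_{i_k}$-submodule generated by $v_{k+1}$ is $l$-highest weight, its $i_k$-string descends by $m_k$ steps, and the eigenvalue of $\phi_{i_k}^{\pm}(u)$ at the bottom of the string is obtained from the one at the top by the involution $q_{i_k}^{\deg P}P(uq_{i_k}^{-1})/P(uq_{i_k})\mapsto q_{i_k}^{-\deg P}P(uq_{i_k})/P(uq_{i_k}^{-1})$ on the Drinfeld polynomial, which is precisely the effect of $T_{i_k}$ on the $Y_{i_k,a}$-factors. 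To pin down the remaining (neighbouring) components I would invoke the earlier theorem that every $l$-weight of $L(M)$ lies in $M\,\ringAa$, together with $\mathrm{wt}(A_{i,a}^{\pm1})=\pm\alpha_i$: passing from $\mu_{k+1}$ to $\mu_k$ therefore multiplies the $l$-weight by exactly $m_k$ factors $A_{i_k,a}^{-1}$. Since the $\phi_{i_k}^{\pm}$-computation already determines which spectral parameters $a$ occur, and an $A$-monomial is determined by its indices, the whole monomial $T_{i_k}N_{k+1}$ (including the neighbouring $Y_j$-factors concealed inside the $A_{i_k,a}^{-1}$) is forced. Matching the combinatorics of these spectral shifts against the explicit formulae of Definition~\ref{def:braid action on ringY} is the delicate bookkeeping that constitutes the bulk of the proof.
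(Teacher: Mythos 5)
First, a point of reference: the paper does not prove this statement at all --- it is imported verbatim from Chari's work, cited as \cite[Proposition~4.1]{chari2002braid}. So the comparison is really with Chari's original argument, and your proposal is essentially a reconstruction of it: the one-dimensionality of the extremal weight spaces from $W$-invariance of the ordinary character, induction along a reduced expression $w=s_{i_1}\cdots s_{i_r}$, and at each step a reduction to the copy of $\mathcal{U}_{q_{i}}\hatsltwo$ generated by $x_{i,l}^{\pm},h_{i,l},k_i^{\pm1}$, using that an extremal vector is annihilated by the raising operators of that subalgebra because $\mu_{k+1}+\alpha_{i_k}$ leaves the convex hull of $W\lambda$. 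All of that is correct and is the right strategy.

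The one place where your justification, as written, does not support the conclusion is the claim that ``passing from $\mu_{k+1}$ to $\mu_k$ multiplies the $l$-weight by exactly $m_k$ factors $A_{i_k,a}^{-1}$,'' which you derive from the inclusion of all $l$-weights in $M\ringAa$ together with $\mathrm{wt}(A_{j,b}^{\pm1})=\pm\alpha_j$. Weight bookkeeping alone does not rule out extra factors of mixed colour with cancelling weights: a monomial of the form $N_{k+1}\,A_{i_k,a_1}^{-1}\cdots A_{i_k,a_{m_k}}^{-1}A_{j,b}\,A_{j,b'}^{-1}$ ($j\neq i_k$) has the same $\Lambda$-degree, lies in $M\ringAa$, and is not detected by the $\phi_{i_k}^{\pm}(u)$-eigenvalue when $j$ is not adjacent to $i_k$. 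What is actually needed here is the finer structural fact about restriction to the rank-one subalgebra: every $l$-weight of the $\mathcal{U}_{q_i}\hatsltwo$-submodule generated by an $l$-weight vector of $l$-weight $N$ lies in $N\,\mathbb{Z}[A_{i,b}^{-1}]_{b\in\Ci^*}$ (Frenkel--Mukhin's analysis of $i$-dominant monomials, the same input used in the paper's Lemma~\ref{lemma:chiqw in Aw} via the cited $\ringAa$-structure theorem). Once that is invoked, the rest of your argument closes: the multiset of spectral parameters $\{a_s\}$ is recovered uniquely from the $i_k$-component (the ``domino'' factorization $\prod_s Y_{i_k,a_sq_{i_k}^{-1}}Y_{i_k,a_sq_{i_k}}$ determines the $a_s$), and the resulting monomial matches $T_{i_k}N_{k+1}$ from Definition~\ref{def:braid action on ringY}. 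You should also say a word about the degenerate steps $m_k=0$ (non-regular $\lambda$): there the extremal vector generates a trivial rank-one module, so its $i_k$-component is trivial and $T_{i_k}$ fixes $N_{k+1}$, keeping the induction consistent. With those two repairs the proof is complete and agrees with Chari's.
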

        
%----------------------subsection--------------------%
\subsection{Ring of \texorpdfstring{$q$}{}-characters}
In this section, we prepare necessary backgrounds for studying convergent phenomenon of $q$-characters. We study rings where the $q$-characters land on. These rings are equipped with a topology induced from their weights, and we explore their topological completions.

Recall that $\alpha_i$ ($i \in I$) are the simple roots of $\g$. Let 
\[\Lambda = \oplus_{i \in I} \Z \alpha_i\] be the root lattice of $\g$, and 
\[\Lambda_- = \oplus_{i \in I} \N (-\alpha_i)\] be the cone of negative roots. For each element $w \in W$, its action on the root lattice provides a cone in another direction \[w\Lambda_- := \oplus_{i \in I} \N w(-\alpha_i).\] Clearly $w\Lambda_-$ is a monoid under addition with an ideal $w\Lambda_- \setminus \{0\}$.

\subsubsection{}
We begin with the ring of usual characters. 

We denote the Laurent polynomial ring
\[\mathscr{C} = \Z [e^{\pm \alpha_i}]_{i \in I}.\] 

It is $\Lambda$-graded by $\deg(e^{\pm \alpha_i}) = \pm \alpha_i$. Consider the subring $\mathscr{C}_w$ of $\mathscr{C}$ generated by homogeneous elements of $\Lambda$-degree in $w\Lambda_-$.

The $w\Lambda_-$-graded ring $\mathscr{C}_w$ has an ideal which is generated by homogeneous elements of degree in $w\Lambda_- \setminus \{0\}$. $\mathscr{C}_w$ is equipped with a topology induced by this ideal. 

\begin{definition}
	Let $\overline{\mathscr{C}}_w$ be the topological completion of $\mathscr{C}_w$. 
	
	An element in the complete topological ring $\overline{\mathscr{C}}_{w}$ is a (possibly) infinite sum 
	\[\sum_{\lambda \in w\Lambda_-} c_{\lambda}e^{\lambda}, \quad c_{\lambda} \in \mathbb{Z}.\] 
\end{definition}

Clearly, This completion can be described as the ring of formal power series in $e^{-w(\alpha_i)}$:
    \[\overline{\mathscr{C}}_w \simeq \Z \llbracket e^{-w(\alpha_i)} \rrbracket_{i \in I}. \]

\subsubsection{}
Then we deal with the ring of $q$-characters. Fix a non-zero complex number $a \in \Ci^*$.

Recall that when $M$ is a monomial in $Y_{i,aq^n}$ ($i \in I, n \in \Z$), the normalized $q$-character of $L(M)$ is an element in the Laurent polynomial ring 
 \[\mathscr{A} = \ringA.\] 
This ring is $\Lambda$-graded by their weights
    \begin{equation}\label{eq:Lambda degree}
    	\deg(A_{i,aq^n}^{\pm 1}) = \pm \alpha_i.
    \end{equation}

Analogous to the usual characters, for each element $w \in W$, we consider the subring $\mathscr{A}_w$ of $\mathscr{A}$ generated by homogeneous elements of $\Lambda$-degree in $w\Lambda_-$. A topology is defined on the $w\Lambda_-$-graded ring $\mathscr{A}_w$ by the ideal generated by homogeneous elements of degree in  $w\Lambda_- \setminus \{0\}$. 

\begin{definition}
	Let $\overline{\mathscr{A}}_w$ be its topological completion.
	
	An element in $\overline{\mathscr{A}}_w$ is a (possibly) infinite sum of monomials in $A_{i,aq^n}^{\pm 1}$ whose $\Lambda$-degree lies $w\Lambda_-$, and for each given $\alpha \in w\Lambda_-$, there are only finitely many terms of degree $\alpha$.
\end{definition}

\begin{remark}
This definition is related to Frenkel and Hernandez's definitions in \cite[Section~2.3]{frenkel2022weyl}. We remark that this ring $\overline{\mathscr{A}}_w$ is smaller than the ring $\widetilde{\mathscr{Y}}^w$ in \cite{frenkel2022weyl}, as we restricted to the subring consisting of $A_{i,a}^{\pm 1}$. Moreover, homogeneous elements in $\overline{\mathscr{A}}_w$ have degree in $w\Lambda_-$, while homogeneous elements in the ring $\widetilde{\mathscr{Y}}^w$ in \cite{frenkel2022weyl} may have degree in the weight lattice $P$.
	
On the other hand, we remark that this topological completion is larger than one expected. For example, when $w = e$ the neutral element, the topological completion $\overline{\mathscr{A}}_e$ is larger than the ring $\Z \llbracket A_{i,aq^n}^{-1} \rrbracket_{i \in I,n \in \Z}$ considered by Frenkel-Mukhin and Hernandez-Jimbo \cite{frenkel2001combinatorics,hernandez2012asymptotic}. In fact, the $0$-degree space of $\overline{\mathscr{A}}_e$ is infinite dimensional which contains elements of the form $A_{i,aq^m} A_{i,aq^n}^{-1}$, while the $0$-degree space of $\Z \llbracket A_{i,aq^n}^{-1} \rrbracket_{i \in I,n \in \Z}$ is one-dimensional.
\end{remark}

\subsubsection{}
We also need the mixed ring containing both $A_{i,a}$ and $e^{\alpha}$. Similarly, for a fixed $a \in \Ci^*$, we consider the $\Lambda$-graded ring 
\[\mathscr{CA} := \Z [e^{\pm \alpha_i}, A_{i,aq^n}^{\pm 1}]_{i \in I, n \in \Z} \simeq \mathscr{C} \otimes \mathscr{A}.\] 

Let $\mathscr{CA}_w$ be its subring generated by homogeneous elements of degree in $w\Lambda_-$, and its topological completion is denoted by $\overline{\mathscr{CA}}_w$.

%----------------------subsection--------------------%
\subsection{Normalization of \texorpdfstring{$q$}{}-characters}
We define various normalization of $q$-characters corresponding to Weyl group elements, which generalize the normalized $q$-character $\widetilde{\chi}_q$.

Throughout this section, $a \in \Ci^*$ will be a fixed non-zero complex number. 

    \begin{definition}
    	Let $M$ be a monomial in $Y_{i,aq^n}$, $i \in I, n \in \Z$.
        For each $w \in W$, recall that $T_wM$ is a monomial in $\ringY$ as defined in Definition~\ref{def:braid action on ringY}. Recall also that $\chi_q(L(M))$ is an element in $\ringY$. We define the $w$-normalized $q$-character of $L(M)$ to be
        \begin{equation}
            \chi_q^w(L(M)) := \frac{\chi_q(L(M))}{T_wM} \in \ringY.
        \end{equation}
    \end{definition}
In particular, when $w = e$ the neutral element, this is what we called the normalized $q$-character, $\chi_q^e(L(M)) = \widetilde{\chi}_q(L(M))$.

Throughout this section, we fix an index $i \in I$, and we consider the highest $l$-weight of KR-modules
    \begin{equation}\label{eqn:KR weight}
        M_k = M^{(i)}_k := Y_{i,aq_i^{-2k+1}} \cdots Y_{i,aq_i^{-3}}Y_{i,aq_i^{-1}}, \quad \forall k \in \N^*.
    \end{equation}
Let $m$ be a monomial in $Y_{j,b}$ ($j \in I , b \in \Ci^*$) which is independent of $k$.

In general, if we decompose $m = \prod_{b \in \Ci/q^{\Z}} m_b$ according to the subscripts such that $m_b \in \Z[Y_{i,bq^n}]_{i \in I, n \in \Z}$, then $L(M_km)$ can be decompose into an irreducible tensor product, see \cite[Section~4.8]{chari1991quantum} for $\g = \sltwo$ and \cite[Theorem~5.1]{chari2010beyond} for general types:
\[L(M_km) = L(Y_{i,aq_i^{-2k+1}} \cdots Y_{i,aq_i^{-3}}Y_{i,aq_i^{-1}}m_a) \otimes L(m_a^{-1} m),\]
where the first factor $Y_{i,aq_i^{-2k+1}} \cdots Y_{i,aq_i^{-3}}Y_{i,aq_i^{-1}}m_a \in \Z[Y_{i,aq^n}]_{i \in I, n \in \Z}$ and the second factor $m_a^{-1}m$ is independent of $k$. Thus it is enough to study the case $m \in \Z[Y_{i,aq^n}]_{i \in I, n \in \Z}$.

From now on, we assume $m$ to be a monomial in $\Z[Y_{i,aq^n}]_{i \in I, n \in \Z}$ for simplicity.

    \begin{lemma}\label{lemma:chiqw in Aw}
        $\chi_q^{w}(L(M_km))$ is an element in the ring $\mathscr{A}_w$.
    \end{lemma}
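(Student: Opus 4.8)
The plan is to verify the two defining conditions for membership in $\mathscr{A}_w$ separately: first that $\chi_q^w(L(M_km))$ is a genuine element of $\mathscr{A} = \ringA$, i.e. a Laurent polynomial in the $A_{i,aq^n}^{\pm 1}$; and second that every monomial occurring in it has $\Lambda$-degree in the cone $w\Lambda_-$. The second condition, which is the substantive one, will be reduced to a statement about the ordinary weights of the finite-dimensional module $L(M_km)$.

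For the first condition, I would start from the Frenkel--Mukhin theorem quoted above: since $M_km \in \Z[Y_{i,aq^n}]_{i \in I, n \in \Z}$, we have $\chi_q(L(M_km)) = (M_km)\,\widetilde{\chi}_q(L(M_km))$ with $\widetilde{\chi}_q(L(M_km)) \in \mathscr{A}$. It then suffices to show that $T_w(M_km)/(M_km) \in \mathscr{A}$, for then $\chi_q^w(L(M_km)) = \bigl(T_w(M_km)/(M_km)\bigr)^{-1}\widetilde{\chi}_q(L(M_km))$ is a product of two elements of $\mathscr{A}$. This is a direct computation with Definition~\ref{def:braid action on ringY}: each generator $T_i$ sends a $Y$-monomial $N$ to $N$ times a monomial in the $A_{j,b}$, and one checks that the spectral parameters $b$ remain of the form $aq^{\text{integer}}$ because every shift appearing in the braid formulas is an integral power of $q$. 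Iterating over a reduced expression of $w$ and using that the $\mathcal{B}$-action preserves the subgroup generated by the $A_{j,b}$ keeps us inside $\mathscr{A}$.

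The heart of the proof is the degree condition, and this is where I expect the only real subtlety. Fix a monomial $[\bpsi]$ appearing in $\chi_q(L(M_km))$ and let $\mu = \mathrm{wt}(\bpsi)$ be its weight; the corresponding monomial of $\chi_q^w(L(M_km))$ is $[\bpsi]/T_w(M_km)$, whose $\Lambda$-degree is $\mu - \mathrm{wt}(T_w(M_km)) = \mu - w\lambda$, where $\lambda = \mathrm{wt}(M_km)$ and I use that the braid action descends to the Weyl action on weights, so $\mathrm{wt}(T_w N) = w\,\mathrm{wt}(N)$. I would then write $\mu - w\lambda = w\bigl(w^{-1}\mu - \lambda\bigr)$ and observe that it lies in $w\Lambda_-$ precisely when $w^{-1}\mu - \lambda \in \Lambda_-$, i.e. when $\lambda - w^{-1}\mu$ is a nonnegative integral combination of simple roots. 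Since $L(M_km)$ is finite-dimensional its highest weight $\lambda$ is dominant and its set of weights is $W$-invariant; hence $w^{-1}\mu$ is again a weight of $L(M_km)$ and is therefore $\le \lambda$ in the dominance order. This gives $w^{-1}\mu - \lambda \in \Lambda_-$, so the $\Lambda$-degree $\mu - w\lambda$ lies in $w\Lambda_-$. (The extreme case $\mu = w\lambda$, where this degree is $0$, is exactly the monomial $T_w(M_km)$ realized in $\chi_q(L(M_km))$ by Chari's Theorem~\ref{thm:Chari l weight}, confirming that normalizing by $T_w(M_km)$ is the right choice.)

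Combining the two parts, every monomial of the finite sum $\chi_q^w(L(M_km))$ is a homogeneous element of $\mathscr{A}$ of degree in $w\Lambda_-$, so $\chi_q^w(L(M_km)) \in \mathscr{A}_w$. The main point to get right is the weight argument of the third paragraph: recognizing that the apparently asymmetric condition ``$\Lambda$-degree in $w\Lambda_-$'' is exactly the standard dominance bound $w^{-1}\mu \le \lambda$ transported by $w$, which holds by $W$-invariance of the weight polytope of $L(M_km)$. The remaining verifications are bookkeeping with Frenkel--Mukhin and the explicit braid formulas.
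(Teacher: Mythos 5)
Your proof is correct and follows essentially the same route as the paper's: the weight argument in your third paragraph is exactly the paper's observation that the $W$-stable set of weights of $L(M_km)$, being contained in $\lambda + \Lambda_-$, is also contained in $w\lambda + w\Lambda_-$. The only cosmetic difference is that you check $T_w(M_km)/(M_km) \in \ringA$ directly from the braid formulas, whereas the paper deduces that $M_km/T_w(M_km)$ is a monomial in $\ringA$ from Theorem~\ref{thm:Chari l weight} (namely $T_w(M_km)$ occurs as a monomial of $\chi_q(L(M_km)) \in M_km\,\ringA$); both verifications are valid.
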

    \begin{proof}
        Recall that the $q$-character $\chi_q(L(M_km))$ lies in $M_km \ringA$. Furthermore, by Theorem~\ref{thm:Chari l weight}, $T_w(M_km)$ appears as a monomial in $\chi_q(L(M_km))$, thus $\frac{M_km}{T_w(M_km)}$ is a monomial in $\ringA$. Therefore, the $w$-normalized $q$-character 
        \[\chi_q^{w}(L(M_km)) = \frac{\chi_q(L(M_km))}{T_w(M_km)} = \frac{M_km}{T_w(M_km)} \widetilde{\chi}_q(L(M_km))\] lies in the ring $\Z [A_{i,aq^n}^{\pm 1}]_{i \in I, n \in \Z}$. 
        
        Moreover, let $\lambda$ be the weight of $M_km$, then the set of weights of $L(M_km)$ are contained in $\lambda + \Lambda_-$ and is stable under the finite Weyl group $W$ action. Thus the set of weights in $L(M_km)$ are contained in $w(\lambda) + w\Lambda_-$, $\forall w \in W$. Therefore $\chi_q^{w}(L(M_km)) \in \mathscr{A}_w$.
    \end{proof}

%-----------------------------subsection------------------------------%

\subsection{Projected \texorpdfstring{$q$}{}--characters and their limits}
Throughout this section, $M_k$ is given by \eqref{eqn:KR weight} with a fixed index $i \in I$ and a fixed non-zero complex number $a \in \Ci^*$, and $m$ is a monomial in $Y_{j,aq^n}$ ($j \in I, n \in \Z$) which is independent of $k$. In this section, we study the convergent phenomenon of $q$-characters of $l$-highest weight modules $L(M_km)$. 

Recall that when $w = e$, the normalized $q$-characters $\chi_q^e(L(M_km)) = \widetilde{\chi}_q(L(M_km))$ form a convergent sequence in the topological ring $\Z \llbracket  A_{i,aq^n}^{-1} \rrbracket_{i \in I, n \in \Z}$. When $m = 1$, this is known as the $T$-systems \cite{nakajima2003t,hernandez2006kirillov,hernandez2012asymptotic}. For general $m$, the proof is similar \cite[Theorem~7.6]{hernandez2016clusterO}. 

For general $w \in W$, the $w$-normalized $q$-characters $\chi_q^w(L(M_km))$ are no longer convergent. 

	\begin{example}
		When $\g = \sltwo$ and the monomial $m = 1$. Under the $s_1$-normalization, we have
		\[\chi_q^{s_1}(L(M_k)) = 1 + A_{aq^{-2k+2}} + A_{aq^{-2k+2}}A_{aq^{-2k+4}} + \cdots + A_{aq^{-2k+2}}\cdots A_{a}, \; \forall k \in \N^*.\]
		These form a sequence in $\mathscr{A}_{s_1}$, which does not converge in the ring $\overline{\mathscr{A}}_{s_1}$.
	\end{example}

However, we define the notion of projected $w$-normalized $q$-characters, and we conjecture that the projected $w$-normalized $q$-characters converge.

    \begin{definition}
        Let $R \in \Z$ be an integer. Recall that by definition $ \mathscr{A} = \ringA$ and $\mathscr{CA} = \Z [e^{\pm \alpha_i}, A_{i,aq^n}^{\pm 1}]_{i \in I, n \in \Z}$. Define a ring homomorphism 
        \begin{equation}\label{eq:piR}
        \pi_R : \mathscr{A} \to \mathscr{CA}
        \end{equation}
        which maps $A_{i,aq^n}^{\pm 1}$ to $e^{\pm \alpha_i}$ when $n < R$, and maps $A_{i,aq^n}^{\pm 1}$ to $A_{i,aq^n}^{\pm 1}$ when $n \geq R$. In particular, $\pi_R$ preserves the $\Lambda$-degree \eqref{eq:Lambda degree}.
    \end{definition}

    \begin{definition}
        Let $R \in \Z$ be an integer. We define the projected $w$-normalized $q$-character $\pi_q^{w,\geq R}(L(M_km))$ to be $\pi_R(\chi_q^w(L(M_km))) \in \mathscr{CA}$. This is well-defined by Lemma~\ref{lemma:chiqw in Aw}, moreover, the projected $w$-normalized $q$-character
        \[\pi_q^{w,\geq R}(L(M_km)) \in \mathscr{CA}_w \subset \overline{\mathscr{CA}}_w.\]
    \end{definition}

    \begin{conjecture}\label{conj:limit of projected w norm}
    	For fixed $i \in I$, $a \in \Ci^*$, and a fixed monomial $m \in \Z[Y_{j,aq^n}]_{j \in I, n \in \Z}$. Let $M_k$ be as in \eqref{eqn:KR weight}, $\forall k \in \N^*$. 
        \begin{enumerate}
            \item The sequence $\pi_q^{w,\geq R}(L(M_km))$, $k = 1,2,\cdots$, is convergent in the topological ring $\overline{\mathscr{CA}}_w$ when $k \to + \infty$. We denote the limit of this sequence by $\pi_{q,\infty}^{w,\geq R}(m)$.
            \item  Moreover, the sequence $\pi_{q,\infty}^{w,\geq R}(m)$, $R= 0,-1,-2,\cdots$, is convergent in the ring $\overline{\mathscr{CA}}_w$ when $R \to -\infty$. We denote its limit by $\pi_{q,\infty}^{w}(m)$.
            \end{enumerate}
    \end{conjecture}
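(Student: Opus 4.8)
The plan is to bootstrap from the already-known convergence at $w=e$ and then to analyze precisely how the homomorphism $\pi_R$ disposes of the monomial that obstructs convergence for general $w$. The starting point is the factorization from the proof of Lemma~\ref{lemma:chiqw in Aw}. Setting $N_{w,k} := M_km/T_w(M_km)$, Theorem~\ref{thm:Chari l weight} together with the inclusion $\chi_q(L(M_km)) \in M_km\,\ringA$ shows that $N_{w,k}$ is a monomial in the $A_{j,b}$ with non-negative exponents, and
\[ \chi_q^w(L(M_km)) = N_{w,k}\,\widetilde{\chi}_q(L(M_km)), \]
where $\widetilde{\chi}_q(L(M_km)) = \chi_q^e(L(M_km))$ converges to a limit $\widetilde{\chi}_{q,\infty}(m)$ in $\Z\llbracket A_{i,aq^n}^{-1}\rrbracket_{i\in I,n\in\Z}$ as $k\to\infty$ by the $T$-system asymptotics \cite{nakajima2003t,hernandez2006kirillov,hernandez2016clusterO,hernandez2012asymptotic}. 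Since $T_w$ is a ring homomorphism and $M_k = \prod_{l=1}^{k} Y_{i,aq_i^{-2l+1}}$, we have $N_{w,k} = N_{w,k-1}\cdot \big(Y_{i,aq_i^{-2k+1}}/T_w Y_{i,aq_i^{-2k+1}}\big)$; the incremental factor is a non-negative $A$-monomial of fixed $\Lambda$-degree $\nu := \omega_i - w\omega_i$, all of whose variables have $q$-exponent tending to $-\infty$ as $k\to\infty$. One checks $\nu = -w(\omega_i - w^{-1}\omega_i) \in w\Lambda_-$, since $\omega_i - w^{-1}\omega_i$ is a non-negative combination of simple roots.

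For Part (1), fix $R$. Because the $q$-exponents of the successive incremental factors of $N_{w,k}$ recede to $-\infty$, there is $k_0 = k_0(R)$ so that for all $k \ge k_0$ every variable in $Y_{i,aq_i^{-2k+1}}/T_w Y_{i,aq_i^{-2k+1}}$ has $q$-exponent $< R$, whence $\pi_R$ collapses this factor to $e^{\nu}$, giving $\pi_R(N_{w,k}) = e^{(k-k_0)\nu}\,\pi_R(N_{w,k_0})$ for $k\ge k_0$. Extending $\pi_R$ continuously to the completion $\Z\llbracket A_{i,aq^n}^{-1}\rrbracket_{i\in I,n\in\Z}$ (it preserves the $\Lambda$-grading, hence is continuous) and applying it to the factorization, I would write
\[ \pi_q^{w,\ge R}(L(M_km)) = e^{(k-k_0)\nu}\,\pi_R(N_{w,k_0})\,\pi_R\big(\widetilde{\chi}_q(L(M_km))\big), \qquad k\ge k_0, \]
with the last factor converging to $\pi_R(\widetilde{\chi}_{q,\infty}(m))$. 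It then remains to prove that the right-hand side is Cauchy in $\overline{\mathscr{CA}}_w$, equivalently that each graded piece stabilizes in $k$. The mechanism is that $\nu \in w\Lambda_-$, so the prefactor $e^{(k-k_0)\nu}$ drives the image $\pi_R(\widetilde{\chi}_q(L(M_km)))$, supported in $\Lambda$-degrees bounded above by $0$, steadily into the cone; isolating the degree-$\mu$ part forces $\pi_R(\widetilde{\chi}_q(L(M_km)))$ to contribute at a degree receding to $-\infty$ with $k$, and one checks—using the $w=e$ convergence together with a telescoping identity asserting that $(e^\nu-1)\,\pi_R(\widetilde{\chi}_{q,\infty}(m))$ is supported in bounded-below degree—that the contribution stabilizes. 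This yields the limit $\pi_{q,\infty}^{w,\ge R}(m)$.

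For Part (2), I would compare consecutive projections. Writing $\sigma_R$ for the degree-preserving endomorphism of $\mathscr{CA}$ sending $A_{j,aq^{R-1}}^{\pm1}\mapsto e^{\pm\alpha_j}$ and fixing the remaining generators, one has $\pi_R = \sigma_R\circ\pi_{R-1}$. Since $\sigma_R$ is continuous on $\overline{\mathscr{CA}}_w$, passing to the limit in $k$ from Part (1) gives $\pi_{q,\infty}^{w,\ge R}(m) = \sigma_R\big(\pi_{q,\infty}^{w,\ge R-1}(m)\big)$, so that
\[ \pi_{q,\infty}^{w,\ge R-1}(m) - \pi_{q,\infty}^{w,\ge R}(m) = (\id - \sigma_R)\big(\pi_{q,\infty}^{w,\ge R-1}(m)\big) \]
is supported precisely on the monomials containing the variable $A_{j,aq^{R-1}}$. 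To conclude that this sequence is Cauchy as $R\to-\infty$, I would establish the structural estimate that in $\pi_{q,\infty}^{w,\ge R-1}(m)$ any monomial involving a variable of $q$-exponent $R-1$ has $\Lambda$-degree at least $c\,|R|$ steps into the cone $w\Lambda_-$ for some $c>0$; this reflects the fact that reaching a variable far below the top in a $q$-character monomial requires a long connected chain of $A$-factors, forcing a large weight. Given such an estimate, the above differences vanish in the topology of $\overline{\mathscr{CA}}_w$, and the limit $\pi_{q,\infty}^w(m)$ exists.

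The main obstacle, in both parts, is that mere convergence of the normalized $q$-characters at $w=e$ does not suffice: one needs finer, uniform information about $\widetilde{\chi}_{q,\infty}(m)$ and about the finite-$k$ characters—namely the telescoping behavior under multiplication by $e^\nu$ in Part (1), and the correlation between the $q$-exponent of a variable and the $\Lambda$-degree of its monomial in Part (2). Neither is directly supplied by the known asymptotic results, which are statements about $\Z\llbracket A_{i,aq^n}^{-1}\rrbracket_{i\in I,n\in\Z}$ alone; I therefore expect the genuine difficulty, and the reason the statement is posed as a conjecture, to lie in obtaining this refined control, presumably through the asymptotic representation theory of $\Uqb$ in the spirit of \cite{hernandez2012asymptotic} adapted to the $w$-twisted setting.
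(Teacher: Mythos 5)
The statement you are trying to prove is stated in the paper as Conjecture~\ref{conj:limit of projected w norm} and is \emph{not} proved there: the paper only verifies it by explicit computation for Kirillov--Reshetikhin modules in types $\sltwo$ and $\mathfrak{sl}_3$ (Examples~\ref{ex:projectedlimitsl2} and~\ref{ex:projectedlimitsl3}), for type $\mathfrak{sl}_n$ with $w=s_i$ via the explicit formulas of \cite{feigin2017finite}, and for $w=w_0$ in general (Proposition~\ref{prop:longest w0}, where the projection kills all $A$-variables and the claim reduces to Weyl-invariance of ordinary characters). So there is no proof in the paper to compare yours against, and a complete argument from you would be new mathematics rather than a reconstruction.

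Your reduction is sound as far as it goes and is consistent with the mechanism visible in the paper's examples: the factorization $\chi_q^w(L(M_km)) = N_{w,k}\,\widetilde{\chi}_q(L(M_km))$ with $N_{w,k}=M_km/T_w(M_km)$ a non-negative $A$-monomial (this is exactly the proof of Lemma~\ref{lemma:chiqw in Aw}), the computation $\deg(Y_{i,b}/T_wY_{i,b})=\omega_i-w\omega_i\in w\Lambda_-$, and the observation that $\pi_R(N_{w,k})$ eventually grows by exactly $e^{\nu}$ per step (compare the prefactor $e^{(k-N)\alpha}$ in Example~\ref{ex:projectedlimitsl2}). But the two claims that carry all the weight are asserted, not proved, and you say so yourself: (i) in Part~(1), the stabilization of the degree-$\mu$ component of $e^{(k-k_0)\nu}\pi_R\bigl(\widetilde{\chi}_q(L(M_km))\bigr)$ requires a quasi-periodicity of the deep graded pieces of the normalized $q$-characters under multiplication by $e^{\nu}$ (your ``telescoping identity''), which is not a consequence of the known convergence in $\Z\llbracket A_{i,aq^n}^{-1}\rrbracket_{i\in I,n\in\Z}$, since that convergence is degree-by-degree at \emph{fixed} degree while your argument probes degree $\mu-(k-k_0)\nu$, a moving target; (ii) in Part~(2), the Cauchy property as $R\to-\infty$ rests on an unproved correlation between the spectral parameter of a variable occurring in a monomial of the limit and the $\Lambda$-degree of that monomial. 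A further point you pass over too quickly: $\pi_R$ does \emph{not} extend continuously to all of $\Z\llbracket A_{i,aq^n}^{-1}\rrbracket_{i\in I,n\in\Z}$, because infinitely many distinct monomials (e.g.\ all $A_{i,aq^n}^{-1}$ with $n<R$) share the same image $e^{-\alpha_i}$; even the well-definedness of $\pi_R(\widetilde{\chi}_{q,\infty}(m))$ needs the finite-dimensionality of the weight spaces of the limit module. In short, your framework correctly isolates where the difficulty lies, but the statement remains a conjecture after your argument, exactly as it does in the paper.
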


	\begin{definition}
		The limit $\pi_{q,\infty}^{w}(m)$ in the above conjecture is called the projected limit. When the given monomial $m = 1$, we simply write $\pi_{q,\infty}^{w,\geq R}(m)$ as $\pi_{q,\infty}^{w,\geq R}$, and write the projected limit $\pi_{q,\infty}^{w}(m)$ as $\pi_{q,\infty}^{w}$.
	\end{definition}

When $\g = \mathfrak{sl}_n$, we have explicit formula on $q$-characters of KR-modules \cite{feigin2017finite}, which allows us to verify this conjecture in some cases.
For example, we verify this conjecture for $\g = \sltwo$ and $\g = \mathfrak{sl}_3$ for KR-modules.

    \begin{example}\label{ex:projectedlimitsl2}
        For $\g = \sltwo$, we give an example calculated in details.
        \begin{itemize}
            \item 
        When $w = e$, the normalized $q$-character is
            \[\chi_q^e(L(M_k)) = 1 + A^{-1}_{a} +  A^{-1}_{a}A_{aq^{-2}}^{-1} + \cdots + A^{-1}_{a}\cdots A^{-1}_{aq^{-2k+2}},\]
        thus the projected normalized $q$-character at $R = -2N+2$ or $R = -2N+1$ is
            \[\begin{split}
                \pi_q^{e,\geq R}(L(M_k)) =& 1 + A^{-1}_{a} +\cdots + A^{-1}_{a}\cdots A^{-1}_{aq^{-2N+4}}\\ 
                &+ A^{-1}_{a}\cdots A^{-1}_{aq^{-2N+2}}(1 + e^{-\alpha} + \cdots + e^{-(k-N)\alpha}), \; \forall k > N,
                \end{split}\]
            which converges in the topological ring $\overline{\mathscr{CA}}_e$ as $k \to \infty$, and the limit is
            \[\pi_{q,\infty}^{e,\geq R} = 1 + A^{-1}_{a} +\cdots + A^{-1}_{a}\cdots A^{-1}_{aq^{-2N+4}} + A^{-1}_{a}\cdots A^{-1}_{aq^{-2N+2}} \frac{1}{1-e^{-\alpha}}. \] 
        This sequence again converges in $\overline{\mathscr{CA}}_e$ when $R \to -\infty$, and the limit is 
            \[\pi_{q,\infty}^{e} = 1 + A^{-1}_{a} +  A^{-1}_{a}A_{aq^{-2}}^{-1} +\cdots + A^{-1}_{a}\cdots A^{-1}_{aq^{-2n+2}} + \cdots.\]
            \item
        When $w = s_1$, the $s_1$-normalized $q$-character is
                \[\chi_q^{s_1}(L(M_k)) = 1 + A_{aq^{-2k+2}} + A_{aq^{-2k+2}}A_{aq^{-2k+4}} + \cdots + A_{aq^{-2k+2}}\cdots A_{a},\]
        thus the projected $s_1$-normalized $q$-character at  $R = -2N+2$ or $R = -2N+1$ is
            \[\begin{split}
                 \pi_q^{s_1,\geq R}(L(M_k)) & = 1 + e^{\alpha} + \cdots + e^{(k-N)\alpha}\\
                &+ e^{(k-N)\alpha}(1 + A_{aq^{-2N+2}} + A_{aq^{-2N+2}}A_{aq^{-2N+4}} + \cdots + A_{aq^{-2N+2}}\cdots A_{a}),
            \end{split}\] 
            for $k > N$.
        As $k \to \infty$, the sums above have a limit in $\overline{\mathscr{CA}}_{s_1}$ which equals to 
            \[\pi_{q,\infty}^{s_1,\geq R} = 1 + e^{\alpha} + \cdots + e^{n\alpha} + \cdots = \frac{1}{1-e^{\alpha}}.\] 
        They are in fact independent of $R$, thus when $R \to -\infty$, we have the projected limit
            \[\pi_{q,\infty}^{s_1} =\frac{1}{1-e^{\alpha}}.\]
        \end{itemize}
    \end{example}

    \begin{example}\label{ex:projectedlimitsl3}
        When $\g = \mathfrak{sl}_3$, it is sufficient to calculate at the index $i = 1$. The $q$-characters of KR-modules $L(M_k^{(1)}) = L(Y_{1,aq^{-2k+1}}\cdots Y_{1,aq^{-3}}Y_{1,aq^{-1}})$ are also explicit.
        \begin{itemize}
            \item When $w=e$, the normalized $q$-character 
                \begin{equation}
                    \chi_q^{e}(L(M_k^{(1)})) = \frac{\chi_q(L(M_k^{(1)}))}{M_k^{(1)}} = \sum_{-1 \leq m \leq n \leq k-1} \prod_{l=0}^{n} A_{1,aq^{-2l}}^{-1} \prod_{s=0}^{m} A_{2,aq^{-2s+1}}^{-1}.
                \end{equation}
            Similar to $\sltwo$ case, we can calculate
                \[\pi_{q,\infty}^{s_1, \geq -2N} = \sum_{-1 \leq m \leq n} (\prod_{l=0}^{\min(n,N)} A_{1,aq^{-2l}}^{-1}e^{-(n-\min(n,N))\alpha_1}) (\prod_{s=0}^{\min(m,N)} A_{2,aq^{-2s+1}}^{-1}e^{-(m-\min(m,N))\alpha_2}).\]
                \[\pi_{q,\infty}^{s_1, \geq -2N+1} = \sum_{-1 \leq m \leq n} (\prod_{l=0}^{\min(n,N-1)} A_{1,aq^{-2l}}^{-1}e^{-(n-\min(n,N-1))\alpha_1}) (\prod_{s=0}^{\min(m,N)} A_{2,aq^{-2s+1}}^{-1}e^{-(m-\min(m,N))\alpha_2}).\] 
            Furthermore, they converge in $\overline{\mathscr{CA}}_{s_1}$ when $N \to \infty$, and the limit is 
                \[\pi_{q,\infty}^{e} = \sum_{-1 \leq m \leq n} (\prod_{l=0}^{n} A_{1,aq^{-2l}}^{-1}) (\prod_{s=0}^{m} A_{2,aq^{-2s+1}}^{-1}).\]

            \item When $w=s_1$, $T_1M_k^{(1)} = Y^{-1}_{1,aq^{-2k+3}}\cdots Y^{-1}_{1,aq^{-1}}Y^{-1}_{1,aq}Y_{2,aq^{-2k+2}}\cdots Y_{2,aq^{-2}}Y_{2,a}$, and we calculate $s_1$-normalized $q$-character
                \[\chi_q^{s_1}(L(M_k^{(1)})) = \frac{\chi_q(L(M_k^{(1)}))}{T_1M_k^{(1)}} = \sum_{\substack{m,n \geq -1\\ m + n \leq k-2}} \prod_{l=0}^{n} A_{1,aq^{-2k+2+2l}} \prod_{s=0}^{m} A_{2,aq^{-2s+1}}^{-1}.\]
            Therefore the projected $s_1$-normalized $q$-character at $R = -2N$ or $R = -2N+1$ has a limit when $k \to \infty$:
                \[\pi_{q,\infty}^{s_1,\geq R} =\sum_{m,n \geq -1} e^{(n+1)\alpha_1} (\prod_{s=0}^{\min(m,N)} A_{2,aq^{-2s+1}}^{-1}e^{-(m-\min(m,N))\alpha_2}).\]
            Again they converge as $R \to -\infty$ and the limit is 
                \[\pi_{q,\infty}^{s_1}= (1 + e^{\alpha_1} + e^{2\alpha_1} + \cdots)(\sum_{m=-1}^{\infty} \prod_{s=0}^{m} A_{2,aq^{-2s+1}}^{-1}).\]

            \item When $w = s_2s_1$, we can also calculate similarly to the $\sltwo$ case that
            \[\pi_{q,\infty}^{s_2s_1}= \sum_{M = 0}^{\infty}\sum_{N = 0}^{M} e^{N\alpha_1 + M \alpha_2} = \frac{1}{1-e^{\alpha_2}}\frac{1}{1-e^{\alpha_1 + \alpha_2}}.\]
        \end{itemize}
        These are all possible $w$-normalization in this example, since the other normalization are $\pi_{q,\infty}^{s_2} = \pi_{q,\infty}^{e}$, $\pi_{q,\infty}^{s_1s_2} =\pi_{q,\infty}^{s_1}$ and $\pi_{q,\infty}^{s_1s_2s_1} = \pi_{q,\infty}^{s_2s_1}$. Thus we have verified the conjecture when $\g = \mathfrak{sl}_3$ and $m = 1$.
    \end{example}

    \begin{remark}
        According to the explicit formula of $q$-characters in \cite[Equation~4.4]{feigin2017finite}, we can also easily verify Conjecture~\ref{conj:limit of projected w norm} for KR-modules of type $\mathfrak{sl}_n$ and when $w = s_i$ is a simple reflection.

        The conjecture is also verified for some representations $L(M_km)$ other than KR-modules. One such example will be shown in Example~\ref{ex:more than KR case}.
    \end{remark}
    
    \begin{remark}
    	In these two examples, we see that when $w = w_0$ the longest element, the projected limits are in fact contained in $\overline{\mathscr{C}}_{w_0}$. This fact is related to \cite[Equation~4.7]{feigin2017finite}, and we will prove it at the end (Proposition~\ref{prop:longest w0}).
    \end{remark}

%------------------------------------------------
%SECTION
%------------------------------------------------

\section{Inductive systems}\label{sec:inductive systems}
It is known that when $w = e$, the limit of normalized $q$-character of KR-modules has an algebraic interpretation: it is given by the $q$-character of prefundamental representations of Borel subalgebras \cite{hernandez2012asymptotic}. In fact, these prefundamental representations were constructed from inductive systems of finite-dimensional representations.

In this section, we construct the inductive systems and projective systems twisted by a Weyl group element. We hope that these inductive limits can explain the convergent phenomenon in Conjecture~\ref{conj:limit of projected w norm}.

%--------------------------subsection-----------------------------%
\subsection{Lusztig's automorphisms}
We begin by reviewing Lusztig's automorphisms and their connection to the braid group actions on $l$-weights.

\subsubsection{Definitions}
Recall that $\widehat{\mathcal{B}}$ is the extended affine braid group. Lusztig have defined four different $\widehat{\mathcal{B}}$-actions on $\Uqghat$ \cite{lusztig1993introduction}. These automorphisms are denoted by $T_{i,\pm 1}', T_{i,\pm 1}''$ by Lusztig. In \cite{beck1994braid}, the following braid group actions on quantum affine algebras are used to define root vectors and are used to express the Drinfeld generators in terms of Drinfeld-Jimbo generators. This braid group action in \cite{beck1994braid} corresponds to $T_{i,-1}''$ in the notation of Lusztig. In this article, we denote these Lusztig automorphisms by $T_i'' = T_{i,-1}''$ for simplicity.

Recall that the $T_i''$ actions are defined by
    \begin{equation}
        \begin{aligned}
            & T''_i e_i=-f_i k_i, \quad T''_i e_j=\sum_{s=0}^{-\widehat{C}_{i j}}(-1)^{s-\widehat{C}_{i j}} q_i^{-s} e_i^{(-\widehat{C}_{i j}-s)} e_j e_i^{(s)} \quad \text { if } i \neq j \in \hat{I}, \\
            & T''_i f_i=-k_i^{-1} e_i, \quad T''_i f_j=\sum_{s=0}^{-\widehat{C}_{i j}}(-1)^{s-\widehat{C}_{i j}} q_i^s f_i^{(s)} f_j f_i^{(-\widehat{C}_{i j}-s)} \quad \text { if } i \neq j \in \hat{I}, \\
            & T''_i k_\beta=k_{s_i \beta}, \quad \forall \beta \in \Lambda_{\mathrm{aff}} :=  \oplus_{i \in \hat{I}} \Z \alpha_i,
        \end{aligned}
    \end{equation}
    together with diagram automorphisms $\tau$ of affine Dynkin diagrams: 
    \[T''_{\tau}(e_i) = e_{\tau(i)}, T''_{\tau}(f_i) = f_{\tau(i)}, T''_{\tau}(k_i) = k_{\tau(i)}. \]

In this article, we need another Lusztig automorphism $T_{i,-1}'$ which is defined as follows. We use simply the symbol $T_i = T_{i,-1}'$ for these automorphisms throughout this article.

    \begin{equation}\label{eq:braidgroupactions}
        \begin{aligned}
            & T_i e_i=-k_i^{-1}f_i, \quad T_i e_j=\sum_{s=0}^{-\widehat{C}_{i j}}(-1)^{(-\widehat{C}_{i j}-s)} q_i^{(\widehat{C}_{i j}+s)} e_i^{(-\widehat{C}_{i j}-s)} e_j e_i^{(s)} \quad \text { if } i \neq j \in \hat{I}, \\
            & T_i f_i=-e_i k_i, \quad T_i f_j=\sum_{s=0}^{-\widehat{C}_{i j}}(-1)^{(-\widehat{C}_{i j}-s)} q_i^{(-\widehat{C}_{i j}-s)} f_i^{(s)} f_j f_i^{(-\widehat{C}_{i j}-s)} \quad \text { if } i \neq j \in \hat{I}, \\
            & T_i k_\beta=k_{s_i \beta}, \quad \forall \beta \in \Lambda_{\mathrm{aff}} :=  \oplus_{i \in \hat{I}} \Z \alpha_i,\\
            &T_{\tau}(e_i) = e_{\tau(i)}, T_{\tau}(f_i) = f_{\tau(i)}, T_{\tau}(k_i) = k_{\tau(i)}.
        \end{aligned}
    \end{equation}

    We use the same notation $T_i$ as for the braid group actions on $l$-weights without risk, since Lusztig's automorphisms act on the algebra $\Uqghat$ while the braid group actions in Definition~\ref{def:barid group action on psi} is defined on $I$-tuple of functions.
    
%--------------------------subsection------------------------------%
\subsubsection{Relation with symmetries} \label{subsec:relationwithsymmetry}
Recall that any finite-dimensional $\Uqghat$-module $V$ has a weight space decomposition $V = \oplus_{\lambda \in \mathfrak{h}^*} V_{\lambda}$, where the weight spaces $V_{\lambda} := \{v \in V | k_i v = q^{\langle \alpha_i, \lambda \rangle} v\}$.

It is proved by Lusztig that the algebra automorphisms $T_i$ above ($T_{i,-1}'$ in \cite{lusztig1993introduction}) are compatible with linear maps $T_i$ of $\Uqghat$-modules $V$ below, in the sense that \[T_i(x.v) = T_i(x).T_i(v), \quad \forall x \in \Uqghat, \; v \in V.\]

Here $T_i : V \to V$ is defined by its action on weight vectors:
    \[T_i(v) = \sum_{a-b+c = \lambda(\alpha_i^{\vee})} (-1)^b q_i^{-(-ac+b)}f_i^{(a)}e_i^{(b)}f_i^{(c)}v , \quad \forall v \in V_{\lambda}, \forall i \in \hat{I}.\]
These linear maps $T_i : V \to V$ are called symmetries of $\Uqghat$-modules. We use the same notation $T_i$ here without risk, since the linear maps $T_i$ act on modules, while Lusztig's automorphisms act on the algebra.

If we replace $T_i$ by another Lusztig's automorphism $T_{i,1}'' = (T_{i,-1}')^{-1} = T_i^{-1}$, then it is compatible with the linear map $T_i^{-1} : V \to V$ given by
    \[T_i^{-1}(v) = \sum_{-a+b-c = \lambda(\alpha_i^{\vee})} (-1)^b q_i^{(-ac+b)}e_i^{(a)}f_i^{(b)}e_i^{(c)}v , \; \forall v \in V_{\lambda},\]
which is the inverse map of $T_i$.

We consider the symmetries acting on extremal vectors. Let $V$ be a finite-dimensional representation of $\Uqghat$. Let $v \in V$ be a weight vector of weight $\lambda$. We
call $v$ an extremal vector if the weights of $V$ are contained in the convex hull of
$W\lambda$ \cite{akasaka1997finite}.

    \begin{lemma}\cite[Proposition~5.2.2]{lusztig1993introduction}
        For an $i \in I$, if $v \in V$ is such that $e_i.v = 0$ and $k_i.v = q_i^m v$, then 
            \[T_i(f_i^{(j)}.v) = (-1)^jq_i^{-j(m+1-j)} f_i^{(m-j)}.v. \]
    \end{lemma}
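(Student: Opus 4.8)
The plan is to deduce the identity from the compatibility between Lusztig's algebra automorphism $T_i$ and the linear symmetry $T_i \colon V \to V$ recalled in Subsection~\ref{subsec:relationwithsymmetry}, namely $T_i(x.u) = T_i(x).T_i(u)$ for all $x \in \Uqghat$ and $u \in V$. Since all operators that occur ($e_i$, $f_i$, $k_i^{\pm1}$ and both incarnations of $T_i$) involve only the index $i$, the whole computation takes place inside the rank-one quantum group $\Uqsltwo$ (with parameter $q_i$) acting on the cyclic module $\Uqsltwo.v$; because $e_i.v = 0$ and $k_i.v = q_i^m v$, this is a highest weight module whose highest weight vector $v$ has weight $m$, i.e.\ $\lambda(\alpha_i^{\vee}) = m$ for the weight $\lambda$ of $v$. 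I would first establish the base case $j = 0$ and then bootstrap to all $j$ via the intertwining property; with the divided-power conventions this handles $j > m$ uniformly, since then both $f_i^{(j)}.v$ and $f_i^{(m-j)}$ vanish.

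For the bootstrap I apply the compatibility to $x = f_i^{(j)}$ and $u = v$, giving $T_i(f_i^{(j)}.v) = T_i(f_i^{(j)}).T_i(v)$. As $T_i$ is an algebra homomorphism with $T_i(f_i) = -e_ik_i$, I get $T_i(f_i^{(j)}) = (-e_ik_i)^{(j)}$; straightening $k_i$ past $e_i$ by $k_ie_i = q_i^2 e_ik_i$ yields $(e_ik_i)^j = q_i^{j(j-1)} e_i^j k_i^j$, hence
\[T_i(f_i^{(j)}) = (-1)^j q_i^{j(j-1)}\, e_i^{(j)} k_i^j.\]
Granting the base case $T_i(v) = f_i^{(m)}.v$, the vector $f_i^{(m)}.v$ has $\alpha_i^{\vee}$-weight $-m$, so $k_i^j$ scales it by $q_i^{-jm}$, and the standard straightening relation $e_i^{(j)} f_i^{(m)}.v = \qbinom{j}{j}_{q_i} f_i^{(m-j)}.v = f_i^{(m-j)}.v$ (valid because $e_i.v = 0$) collapses everything to one term. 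Collecting the powers, $q_i^{j(j-1)}\cdot q_i^{-jm} = q_i^{-j(m+1-j)}$, together with the sign $(-1)^j$, produces exactly the claimed coefficient.

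It remains to prove the base case $T_i(v) = f_i^{(m)}.v$, which I expect to be the only real obstacle. Here I would feed $v \in V_\lambda$ with $\lambda(\alpha_i^{\vee}) = m$ into the defining formula
\[T_i(v) = \sum_{a-b+c = m} (-1)^b q_i^{ac-b}\, f_i^{(a)} e_i^{(b)} f_i^{(c)} v,\]
use $e_i.v = 0$ to collapse $e_i^{(b)} f_i^{(c)} v = \qbinom{m+b-c}{b}_{q_i} f_i^{(c-b)} v$ to a single divided power, and then merge the outer $f_i^{(a)}$; the constraint $a-b+c = m$ forces every surviving term to be a multiple of $f_i^{(m)}.v$. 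After the substitution $c = b+d,\ a = m-d$, the scalar factors as
\[\sum_{d=0}^{m} \qbinom{m}{d}_{q_i}\, q_i^{(m-d)d}\,\Bigl(\sum_{b\ge 0} (-1)^b q_i^{(m-d-1)b}\, \qbinom{m-d}{b}_{q_i}\Bigr),\]
and the inner sum is the standard $q$-binomial vanishing identity $\sum_{b=0}^{N}(-1)^b q_i^{(N-1)b}\qbinom{N}{b}_{q_i} = \delta_{N,0}$ (the $q$-analogue of $\sum_b (-1)^b\binom{N}{b} = 0$), which kills all terms except $N = m-d = 0$. Only $d = m$ survives, leaving the coefficient $1$. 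Thus the whole argument reduces to this one classical $q$-binomial identity, and the finiteness of $V$ is used only to guarantee that Lusztig's symmetry $T_i$ is a well-defined operator on $V$.
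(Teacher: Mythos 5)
The paper does not prove this lemma at all: it is quoted verbatim from Lusztig's book (Proposition~5.2.2 of \cite{lusztig1993introduction}) and used as a black box. Your proposal therefore cannot match "the paper's approach"; it is an independent verification, and I checked it is a correct one. The base case is sound: with the paper's convention $q_i^{-(-ac+b)} = q_i^{ac-b}$, the collapse $e_i^{(b)}f_i^{(c)}v = \qbinom{m-c+b}{b}_{q_i}f_i^{(c-b)}v$ and the merge $f_i^{(a)}f_i^{(c-b)} = \qbinom{m}{a}_{q_i}f_i^{(m)}$ (using $a+c-b=m$) give exactly your double sum, and the inner sum is indeed $\prod_{k=1}^{N}(1-q_i^{2(k-1)}) = \delta_{N,0}$ by the Gaussian binomial theorem, leaving only $d=m$ with coefficient $1$. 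The bootstrap is also correct: $(e_ik_i)^j = q_i^{j(j-1)}e_i^jk_i^j$, $k_i^j$ acts on $f_i^{(m)}.v$ by $q_i^{-jm}$, $e_i^{(j)}f_i^{(m)}.v = f_i^{(m-j)}.v$, and $j(j-1)-jm = -j(m+1-j)$ reproduces the stated coefficient, with the $j>m$ cases vanishing on both sides. The one caveat worth flagging: your bootstrap leans on the intertwining property $T_i(x.v) = T_i(x).T_i(v)$, which in Lusztig's own development is established \emph{after} (and partly by means of) Proposition~5.2.2, so as a from-scratch proof your argument has a mild circularity; it is legitimate in the context of this paper, which already assumes the intertwining property in the same subsection, but a fully independent proof would run the $j>0$ cases through the same direct $q$-binomial computation you used for $j=0$ rather than through the compatibility.
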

    
In particular, if $v$ is an extremal vector in the weight space $V_{\lambda}$, then $T_i.v$ is also an extremal vector in the weight space $V_{s_i \lambda}$.

Recall also that when $V$ is irreducible $l$-highest weight representation, the extremal weight spaces have dimension one. Therefore, any extremal vector can be obtained from a highest weight vector $v_0$ by the action of a series of linear maps $T_i$, $i \in I$. Let $w \in W$ an element in the finite Weyl group, choose a reduced expression $w = s_{i_1} \cdots s_{i_r}$, then  $T_w v_0 = T_{i_1} \cdots T_{i_r} v_0$ is an extremal vector in the $1$-dimensional weight space $V_{w\lambda_0}$, where $\lambda_0$ is the weight of $v_0$.

%--------------------------subsection--------------------------%
\subsubsection{Relation with Chari's braid group action}
Only in this section, we denote the braid group actions in Definition~\ref{def:braid action on ringY} by $T_i^{Cha}$ to emphasize the difference from Lusztig's automorphisms.

Let $V = L(\bpsi)$ be a finite-dimensional irreducible representation of highest $l$-weight $\bpsi$. Then the $I$-tuple of rational functions $\bpsi$ is a monomial in $Y_{i,a}$. In this case, we are using Chari's definition by the formula \eqref{eq:B action on Y}.

    \begin{proposition}\label{prop: braid Lusztig Chari}
        Let $V = L(\bpsi)$ be a finite-dimensional irreducible representation of highest $l$-weight $\bpsi$, and $v_0$ be a highest weight vector. Let $w = s_{i_1} \cdots s_{i_r}$ be a reduced expression of $w \in W$. Let $T_w = T_{i_1} \cdots T_{i_r}$ be the series of Lusztig's automorphisms and $T_w^{Cha} = T_{i_1}^{Cha} \cdots T_{i_r}^{Cha}$ be the series of Chari's braid group action. Then
        \begin{equation}\label{eq: braid Lusztig Chari}
            T_{w^{-1}}(\phi_j^{\pm}(u)).v_0 = T_{w}^{-1}(\phi_j^{\pm}(u)).v_0 = T_w^{Cha}(\bpsi)v_0.
        \end{equation}
    \end{proposition}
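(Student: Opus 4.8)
The plan is to reduce both equalities to a single statement about how the series $\phi_j^{\pm}(u)$ acts on a one–dimensional extremal weight space, exploiting the compatibility between Lusztig's automorphisms on $\Uqghat$ and the linear symmetries of the module recalled in Subsection~\ref{subsec:relationwithsymmetry}. To keep the two kinds of operators apart I would write $\Phi_i = T_i = T_{i,-1}'$ for the algebra automorphism and $S_i$ for the linear symmetry of $V = L(\bpsi)$, so that $S_i(x.v) = \Phi_i(x).S_i(v)$ for all $x \in \Uqghat$ and $v \in V$. Since the $S_i$ satisfy the braid relations, the composites $S_w = S_{i_1}\cdots S_{i_r}$ and $S_{w^{-1}} = S_{i_r}\cdots S_{i_1}$ are well defined and compatible, in the same sense, with $\Phi_w$ and $\Phi_{w^{-1}}$ respectively.

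First I would convert the compatibility into two conjugation formulas on the highest weight vector $v_0$. Replacing $x$ by $\Phi_w^{-1}(x)$ in $S_w(x.v) = \Phi_w(x).S_w(v)$ and setting $v = v_0$ gives
\[\Phi_w^{-1}(x).v_0 = S_w^{-1}\big(x.S_w(v_0)\big), \quad \forall x \in \Uqghat,\]
while setting $v = S_{w^{-1}}^{-1}(v_0)$ in $S_{w^{-1}}(x.v) = \Phi_{w^{-1}}(x).S_{w^{-1}}(v)$ gives
\[\Phi_{w^{-1}}(x).v_0 = S_{w^{-1}}\big(x.S_{w^{-1}}^{-1}(v_0)\big).\]
The key observation is that the two vectors appearing inside, $S_w(v_0)$ and $S_{w^{-1}}^{-1}(v_0)$, are extremal vectors in one and the same weight space $V_{w\lambda_0}$, where $\lambda_0$ is the weight of $v_0$: the weight of $S_w(v_0)$ is $w\lambda_0$ by the extremal–vector discussion of Subsection~\ref{subsec:relationwithsymmetry}, and tracking the weight through $S_{w^{-1}}^{-1} = S_{i_1}^{-1}\cdots S_{i_r}^{-1}$ shows $S_{w^{-1}}^{-1}(v_0)$ lands in $V_{s_{i_1}\cdots s_{i_r}\lambda_0} = V_{w\lambda_0}$ as well.

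Next I would invoke Chari's Theorem~\ref{thm:Chari l weight}: the space $V_{w\lambda_0} = L(\bpsi)_{w\lambda_0}$ is one–dimensional and consists of joint eigenvectors of the $\phi_j^{\pm}(u)$, the eigenvalue being the $j$-th component of $T_w^{Cha}(\bpsi)$. Applying the two displayed formulas with $x = \phi_j^{\pm}(u)$, the inner factor $\phi_j^{\pm}(u).S_w(v_0)$ (resp.\ $\phi_j^{\pm}(u).S_{w^{-1}}^{-1}(v_0)$) becomes $T_w^{Cha}(\bpsi)$ times that extremal vector; then $S_w^{-1}$ (resp.\ $S_{w^{-1}}$), being the genuine inverse linear map, carries the extremal vector back exactly to $v_0$. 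Hence both $\Phi_w^{-1}(\phi_j^{\pm}(u)).v_0$ and $\Phi_{w^{-1}}(\phi_j^{\pm}(u)).v_0$ equal $T_w^{Cha}(\bpsi)\,v_0$. This yields the second equality directly, and the first equality follows because both sides have been identified with the same scalar multiple of $v_0$.

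The main obstacle I anticipate is bookkeeping rather than conceptual: one must pair each algebra automorphism with the correct composite symmetry and the correct reduced word for $w$ and for $w^{-1}$, and verify that no spurious scalar is introduced. This is precisely why I use $S_w^{-1}$ and $S_{w^{-1}}^{-1}$ — the literal inverse linear maps — rather than $S_{w^{-1}}$ and $S_w$, which are not inverse to the relevant symmetries in general; it is the genuine inverse that returns the extremal eigenvector to $v_0$ with coefficient $1$. A secondary point worth flagging is that $\phi_j^{\pm}(u)$, although not preserved by Lusztig's automorphisms as an element of $\Uqhaffine$, nevertheless acts on the extremal line by a scalar, which is exactly the content of Chari's theorem and is what makes the right–hand sides of the conjugation formulas well defined and equal.
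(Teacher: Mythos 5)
Your argument is correct and follows essentially the same route as the paper: both equalities are reduced, via the compatibility $S_i(x.v)=T_i(x).S_i(v)$, to the fact that the extremal weight space $V_{w\lambda_0}$ is one-dimensional and consists of $\phi_j^{\pm}(u)$-eigenvectors with eigenvalue $T_w^{Cha}(\bpsi)$ (Theorem~\ref{thm:Chari l weight}). The only cosmetic difference is that the paper handles $T_{w^{-1}}$ by rewriting it as $(T''_{i_1,1}\cdots T''_{i_r,1})^{-1}$ and rerunning the argument with the other Lusztig automorphism and its compatible symmetry, whereas you work directly with the composite $S_{i_r}\cdots S_{i_1}$ and its literal inverse applied to $v_0$ — the two bookkeeping schemes produce the same extremal vector in $V_{w\lambda_0}$.
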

    \begin{proof}
        Let $\lambda_0$ be the highest weight of $V$. Consider the linear map $T_w :V \to V$. We have seen that $T_wv_0 \in V_{w\lambda_0}$ is a vector in the one-dimensional extremal weight space $V_{w\lambda_0}$.

        By Theorem~\ref{thm:Chari l weight}, the extremal vector $T_wv_0$ is an eigenvector of $\phi_i^{\pm}(u)$, and has eigenvalue $\phi_i^{\pm}(u).T_wv_0 = T_w^{Cha}(\bpsi) T_wv_0$. Therefore, 
        \[T_{w}^{-1}(\phi_j^{\pm}(u)).v_0 = T_w^{-1}(\phi_j^{\pm}(u).T_w(v_0)) = T_w^{-1}(T_w^{Cha}(\bpsi)T_w(v_0)) =T_w^{Cha}(\bpsi)v_0.\]
        In conclusion, $v_0$ is an eigenvector of $T_w^{-1}(\phi_i^{\pm}(u))$ whose eigenvalue coincides with the $l$-weight of the space $V_{w\lambda_0}$.

        Moreover, the above argument also holds for any one of the four Lusztig's automorphisms. We repeat the above argument using $T_{i,1}'' = (T_{i,-1}')^{-1}$, then 
        \[(T_{w,1}'')^{-1}(\phi_j^{\pm}(u)).v_0 = (T_{i_1,1}'' \cdots T_{i_r,1}'')^{-1}(\phi_j^{\pm}(u)).v_0 = T_w^{Cha}(\bpsi)v_0.\]
        Therefore, 
        \[T_{w^{-1}}(\phi_j^{\pm}(u)).v_0 = T_{i_r,-1}' \cdots T_{i_1,-1}'(\phi_j^{\pm}(u)).v_0 =(T_{i_1,1}'' \cdots T_{i_r,1}'')^{-1}(\phi_j^{\pm}(u)).v_0 = T_w^{Cha}(\bpsi)v_0.\]
    \end{proof}

    \begin{remark}
        We remark that recently a relation between Lusztig's automorphisms and Chari's braid group actions is discovered by Friesen, Weekes and Wendlandt \cite{friesen2024braid}. More precisely, even if the Lusztig's automorphisms do not preserve the affine Cartan subalgebra $\Uqhaffine$, the image composed with the projection along the triangular decomposition is a well-defined braid group action on $\Uqhaffine$, and this action is given by Chari's braid group action. This relation may be useful for studying the convergence of projected limits here.
    \end{remark}

%--------------------------subsection----------------------------%

\subsection{Inductive systems}\label{subsec:inductive system}
In this section, we construct inductive systems of representations $T_w^*L(M_km)$ pulled back by a Lusztig's automorphism. The inductive limit is equipped with a well-defined $\Uqb$-action.

\subsubsection{Recall of the construction of Hernandez-Jimbo}
Fix an index $i \in I$ and let $M_k$ be the KR $l$-weight as in \eqref{eqn:KR weight}. Let $m$ be a fixed monomial in $Y_{j,b}$ ($j \in I, b \in \Ci^*$), meaning that $m$ is independent of $k \in \N$. Let $V_k$ be the underlying vector space of $L(M_km)$. 

When $k >l$, $M_kM_l^{-1} = Y_{i,aq_i^{-2k+1}} \cdots Y_{i,aq_i^{-2l-1}}$ is a monomial in variables  $Y_{j,b}$ ($j \in I, b \in \Ci^*$). It is known that there exists an integer $n_0 \in \N$ determined by the monomial $m$, such that for any $k > l > n_0$, there is morphism of $\Uqghat$-modules
	\[L(M_lm) \otimes L(M_kM_l^{-1}) \to L(M_kM_l^{-1}) \otimes L(M_lm)\]
whose image is isomorphic to $L(M_km)$ \cite[Corollary~5.5]{hernandez2010simple}.

Moreover, it is proved that its restriction to 
	\[L(M_lm) \otimes v' \to L(M_kM_l^{-1}) \otimes L(M_lm)\]
is injective, where $v'$ is a highest weight vector of $L(M_kM_l^{-1})$ \cite[Proposition~5.6]{hernandez2010simple}. This defines a family of injective linear maps 
	\begin{equation}
	\varphi_{l,k}: V_l \to V_k, \quad \forall k > l > n_0.
	\end{equation}

We begin by recalling the construction of inductive systems by Hernandez and Jimbo. The following result was proved in \cite{hernandez2012asymptotic}. For the formulation, see also the proof in \cite[Section~3.3]{wang2023qq}.

Let $l_1 < k$ and $l_2 < k$ be positive integers. For $x \in \Uqghat$, we consider the linear map $\varphi_{l_1,k}^{-1}x\varphi_{l_2,k} \in \Hom(V_{l_2},V_{l_1})$ whenever $x(\mathrm{Im}(\varphi_{l_2,k})) \subset \mathrm{Im}(\varphi_{l_1,k})$.
    \begin{lemma}\cite{hernandez2012asymptotic}\label{lemma:operators on limit}
        For all $j \in I$, $\varphi_{l_1,k}^{-1}e_j\varphi_{l_2,k}$, $\varphi_{l_1,k}^{-1}e_0\varphi_{l_2,k}$, and $\varphi_{l_1,k}^{-1}k_j^{-1}f_j\varphi_{l_2,k}$ are linear maps from $V_{l_2}$ to $V_{l_1}$ having the form $C_{l_1,l_2} + q_i^{-2k}D_{l_1,l_2} + \cdots + q_i^{-2kN}E_{l_1,l_2}$ for some $N \geq 1$. Here $C_{l_1,l_2}, D_{l_1,l_2}, \cdots, E_{l_1,l_2}$ stand for linear maps in $\Hom(V_{l_2},V_{l_1})$ depending only on $l_1,l_2$ and the element $x = e_j,e_0$ or $k_j^{-1}f_j$, but are independent of $k$.
    \end{lemma}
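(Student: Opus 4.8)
The plan is to make the maps $\varphi_{l,k}$ completely explicit through the intertwiner that produces them, and then to read off the $q_i^{-2k}$-expansion of $\varphi_{l_1,k}^{-1}x\varphi_{l_2,k}$ directly from the coproduct. Write $I_k$ for the $\Uqghat$-intertwiner $L(M_{l}m)\otimes L(M_kM_{l}^{-1})\to L(M_kM_{l}^{-1})\otimes L(M_{l}m)$ whose image is identified with $V_k=L(M_km)$, and let $v'_k$ be the highest $l$-weight vector of $L(M_kM_{l}^{-1})$, so that $\varphi_{l,k}(w)=I_k(w\otimes v'_k)$ for $w\in V_l$. Throughout I would fix $k>l_1,l_2>n_0$ and use the compatibility $\varphi_{l_1,k}=\varphi_{l_2,k}\circ\varphi_{l_1,l_2}$, which nests the relevant images so that $\varphi_{l_1,k}^{-1}$ is defined on the vectors produced. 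The starting observation is that, since $I_k$ is a morphism of $\Uqghat$-modules, for any generator $x$ one has $x\cdot\varphi_{l_2,k}(w)=I_k\big(\Delta(x)(w\otimes v'_k)\big)$, which reduces the lemma to an analysis of $\Delta(x)(w\otimes v'_k)$ followed by the application of $I_k$ and $\varphi_{l_1,k}^{-1}$.

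The key input is the highest-weight behaviour of $v'_k$ together with the fact that its $\g$-weight equals $(k-l)\omega_i$ up to a $k$-independent shift determined by $m$. First I would treat $x=e_j$ with $j\in I$: since $e_jv'_k=x_{j,0}^+v'_k=0$, only the term $(e_jw)\otimes v'_k$ survives in $\Delta(e_j)(w\otimes v'_k)$, so $e_j\cdot\varphi_{l_2,k}(w)=\varphi_{l_2,k}(e_jw)$ is a purely $k$-independent contribution, producing only the constant term $C_{l_1,l_2}$. For $x=k_j^{-1}f_j$ and $x=e_0$ I would expand the coproduct in the same way; here two new features appear. On one hand, the surviving terms contain Cartan factors $k_j^{\pm}$ acting on $v'_k$ (or on $f_jv'_k$, $e_0v'_k$), and since these act by $q^{\pm\langle\alpha_j,\mathrm{wt}\rangle}$ with $\mathrm{wt}$ growing linearly in $k$ along $\omega_i$, every such scalar is a power of $q_i^{-k}$ times a $k$-independent constant; moreover for $j\neq i$ one has $f_jv'_k=0$, because $\langle\alpha_j^{\vee},(k-l)\omega_i\rangle=0$, which removes the corresponding terms entirely. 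On the other hand, the remaining terms involve the non-highest vectors $f_iv'_k$ and $e_0v'_k$, whose images under $I_k$ are no longer of the simple form $\varphi_{\bullet,k}(\cdot)$.

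To organise the $k$-dependence I would emphasise that the \emph{only} source of it is the spectral-parameter set $\{aq_i^{-2k+1},aq_i^{-2k+3},\dots,aq_i^{-2l-1}\}$ of $L(M_kM_{l}^{-1})$, in which the moving parameters are always $aq_i^{-2k}$ times a fixed power of $q_i$. Consequently the structure constants of the coproduct, the Cartan scalars above, and the matrix coefficients of the normalized intertwiner $I_k$ between weight spaces differing by a fixed weight are all rational functions of the single variable $t:=q_i^{-2k}$. Restricting everything to the finite-dimensional spaces $V_{l_1}$ and $V_{l_2}$ and composing with $\varphi_{l_1,k}^{-1}$ then yields, entry by entry, a rational function of $t$; what must be shown is that this rational function is in fact a polynomial $C_{l_1,l_2}+tD_{l_1,l_2}+\cdots+t^{N}E_{l_1,l_2}$, i.e. that it has no pole at $t=0$ and bounded degree, with all coefficient maps independent of $k$. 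The constant term, obtained by letting $t\to 0$, is exactly the map $C_{l_1,l_2}$ that will define the $\Uqb$-action on the inductive limit.

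The main obstacle I anticipate is precisely this last step: proving regularity at $t=0$ together with a uniform degree bound for the images of the non-highest contributions $f_iv'_k$ and $e_0v'_k$. This is the technical heart of the Hernandez--Jimbo argument and requires a $k$-uniform description of the relevant matrix coefficients of $I_k$ coming from the explicit fusion structure of the Kirillov--Reshetikhin module $L(M_kM_{l}^{-1})$. The cleanest route is again weight bookkeeping: because $\varphi_{l_1,k}^{-1}$ only retains components landing in the fixed finite-dimensional space $\mathrm{Im}(\varphi_{l_1,k})$, any chain of lowering moves that would leave this space must be compensated by a factor carrying a strictly positive power of $t$, so only finitely many powers of $t$ can contribute to a map between the fixed spaces $V_{l_2}$ and $V_{l_1}$, which both terminates the expansion at some $N\geq 1$ and fixes the coefficients independently of $k$.
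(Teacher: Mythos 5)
A preliminary remark: the paper itself offers no proof of this lemma — it is imported wholesale from Hernandez--Jimbo \cite{hernandez2012asymptotic} (with the formulation also recalled in \cite[Section~3.3]{wang2023qq}) — so your attempt can only be measured against that argument. Your reduction is the correct one and matches its outline: realizing $\varphi_{l,k}(w)=I_k(w\otimes v'_k)$ through the intertwiner, writing $x\cdot\varphi_{l_2,k}(w)=I_k\bigl(\Delta(x)(w\otimes v'_k)\bigr)$, and using $e_jv'_k=0$ for $j\in I$ together with $f_jv'_k=0$ for $j\neq i$ correctly disposes of the easy generators and isolates the whole difficulty in the two terms $I_k\bigl((k_0w)\otimes (e_0v'_k)\bigr)$ and $I_k\bigl((k_i^{-1}w)\otimes(k_i^{-1}f_iv'_k)\bigr)$.

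However, the lemma \emph{is} the statement about those two terms, and that is exactly where your argument stops being a proof. Three things are missing. First, the containment $x\bigl(\mathrm{Im}(\varphi_{l_2,k})\bigr)\subset\mathrm{Im}(\varphi_{l_1,k})$, without which $\varphi_{l_1,k}^{-1}x\varphi_{l_2,k}$ is not even defined, is never established; it requires knowing where $I_k$ sends $w\otimes e_0v'_k$ and $w\otimes f_iv'_k$, i.e.\ precisely the $k$-uniform control of the intertwiner on the top weight layers of $L(M_kM_l^{-1})$ that you defer. Second, your closing heuristic (``any chain of lowering moves that would leave this space must be compensated by a factor carrying a strictly positive power of $t$'') presupposes that the matrix coefficients of $I_k$ between fixed weight spaces are polynomials in $t=q_i^{-2k}$ with $k$-independent coefficients and no pole at $t=0$ — but that is the conclusion of the lemma, not an available input, so the argument is circular at the decisive point. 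Third, you observe yourself that the Cartan scalars arising along the way are powers of $q_i^{-k}$ (for instance $k_i^{-1}f_iv'_k$ carries $q_i^{-(k-l)+2}$), which are not functions of $t=q_i^{-2k}$; showing that only even total powers survive in $\varphi_{l_1,k}^{-1}x\varphi_{l_2,k}$ requires pairing these odd powers against matching odd powers produced by the intertwiner, and this bookkeeping — carried out in \cite{hernandez2012asymptotic} via an explicit description of the weight spaces of weights $(k-l)\omega_i$ and $(k-l)\omega_i-\alpha_i$ of the Kirillov--Reshetikhin module and of the action of $I_k$ on them — is absent. The skeleton is right, but the body of the proof is not there.
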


As a consequence, it is proved that the corresponding operators $C_{l_1,l_2}$ are well-defined on the inductive limit $\varinjlim V_k$. We denote these operators on $\varinjlim V_k$ correspondingly by $\tilde{e}_j$, $\tilde{e}_0$ and $\tilde{f}_j$, $j \in I$. 

For any algebraic relation of $e_j$, $e_0$ and $k_j^{-1}f_j$ in $\Uqghat$, the corresponding operators on $\varinjlim V_k$ satisfy the same relation \cite[Section~4.2]{hernandez2012asymptotic}. In particular, the linear operators $\tilde{e}_j$ ($j \in I$) and $\tilde{e}_0$ on $\varinjlim V_k$ satisfy the quantum Serre relations \eqref{eq:quantumserrerelations}. 

Moreover, an action of the Cartan subalgebra $\Uqh$ on $\varinjlim V_k$ is defined in the following way: the action of $k_j$ ($\forall j \neq i$) and the action of $q_i^{-k}k_i$ on each $L(M_km)$ commute with the linear maps $\varphi_{l,k}$. Consequently, these operators are well-defined on the inductive limit $\varinjlim V_k$, denoted by $\tilde{k}_j$ ($j \neq i$) and $\tilde{k}_i$ respectively. Moreover, the operators $\tilde{k}_j$ ($j \in I$) satisfy the quantum Weyl relation with the operators $\tilde{e}_j$ ($j \in I$) and $\tilde{e}_0$ defined above.

Recall that by Proposition~\ref{prop:Borel def}, these are defining relations for the quantum affine Borel algebra $\Uqb$. Therefore, the operators $\tilde{e}_j$, $\tilde{k}_j$ ($j \in I$) and $\tilde{e}_0$ define a $\Uqb$-action on $\varinjlim V_k$.

Recall also that $\varphi_{l,k}$ maps a highest weight vector in $L(M_lm)$ to a highest weight vector in $L(M_km)$. This gives $v_0 \in \varinjlim V_k$ a vector of highest weight in the inductive limit. Then the eigenvalue of $\phi_j^+(u)$-action on $v_0$ is $\bpsi_{i,a}^{-1}m$, where 
\[
\bpsi_{i,a}^{\pm 1} = (1,\cdots,(1-au)^{\pm 1},\cdots,1)
\]
defined in \eqref{eq:psi pm}. This eigenvalue can be calculated as the limit of rational functions $M_km$ when $k \to \infty$ if $|q| > 1$ \cite[Remark~4.3]{hernandez2012asymptotic}.

Details of the above argument can also be found in \cite[Section~3.3]{wang2023qq}.

\subsubsection{New inductive systems}
Now we can generalize this construction twisted by $w \in W$. The length of $w \in W$ is denoted by $l(w)$.
    \begin{lemma}[{\cite[Lemma~40.1.2]{lusztig1993introduction}}]\label{lemma:Lusztig U+}
        Let $T_w$ be the Lusztig's automorphism associated with $w \in W$.  Let $i \in I$, if $l(ws_{i}) = l(w) + 1$, then
        \begin{enumerate}
            \item $T_w(e_i)$ is an algebraic combination of $e_j$, $j \in I$.
            \item $T_w(f_i)$ is an algebraic combination of $f_j$, $j \in I$.
        \end{enumerate}
    \end{lemma}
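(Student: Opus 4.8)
This is the standard assertion that $T_w(e_i)$ is the positive PBW root vector attached to the root $w(\alpha_i)$. Since $w \in W$ and $i \in I$ are of finite type, the $T_j$ ($j \in I$) restrict to automorphisms of the finite-type subalgebra $U_q(\g) \subset \Uqghat$, and the whole argument takes place there; write $U^+$ (resp. $U^-$) for the subalgebra of $U_q(\g)$ generated by the $e_j$ (resp. $f_j$), $j \in I$. The plan is to prove (1) by induction on $l(w)$ and to deduce (2) by symmetry. First I would record the translation of the hypothesis: $l(ws_i) = l(w)+1$ is equivalent to $w(\alpha_i)$ being a positive root, and I would strengthen the inductive claim to: \emph{$T_w(e_i)$ is a homogeneous element of $U^+$ of weight $w(\alpha_i)$.}

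The base case $w = e$ is clear. For $l(w) \geq 1$ I would peel off a descent, i.e. choose a simple reflection $s_{i_1}$ with $l(s_{i_1}w) = l(w) - 1$, and write $w = s_{i_1}w''$ with $l(w'') = l(w) - 1$, so that $T_w = T_{i_1}T_{w''}$. Being a descent means $w^{-1}(\alpha_{i_1})$ is a negative root; since $\alpha_i$ is positive this already forces $w(\alpha_i) \neq \alpha_{i_1}$, and hence $w''(\alpha_i) = s_{i_1}w(\alpha_i)$ is again positive, because $s_{i_1}$ permutes the positive roots other than $\alpha_{i_1}$. Thus the inductive hypothesis applies to $w''$ and yields $T_{w''}(e_i) \in U^+$, homogeneous of weight $\beta := w''(\alpha_i)$, a positive root with $\beta \neq \alpha_{i_1}$ (indeed $s_{i_1}\beta = w(\alpha_i)$ is positive).

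The one genuinely nontrivial point, and the main obstacle, is that $T_{i_1}$ does not preserve $U^+$ (for instance $T_{i_1}(e_{i_1}) = -k_{i_1}^{-1}f_{i_1} \notin U^+$), so applying $T_{i_1}$ to $T_{w''}(e_i)$ is not formally harmless and the induction does not close by itself. Here I would invoke Lusztig's structural analysis of $U^+$ relative to $T_{i_1}$: there is a skew-derivation on $U^+$ whose kernel is the subalgebra $U^+ \cap T_{i_1}(U^+)$, onto which $T_{i_1}$ restricts with image in $U^+$, and a homogeneous element of positive-root weight lies in this kernel precisely when $s_{i_1}$ keeps its weight positive. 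Since $\beta \neq \alpha_{i_1}$ gives $s_{i_1}\beta \in \Phi_+$, the element $T_{w''}(e_i)$ lies in this subalgebra, whence $T_w(e_i) = T_{i_1}(T_{w''}(e_i)) \in U^+$, of weight $s_{i_1}\beta = w(\alpha_i)$, closing the induction. The rank-two formulas of \eqref{eq:braidgroupactions}, which show $T_{i_1}(e_j) \in U^+$ for $j \neq i_1$, are exactly the input used to establish this structural lemma.

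Part (2) then follows by running the identical induction with $e_j$ replaced by $f_j$ and $U^+$ by $U^-$: the formulas in \eqref{eq:braidgroupactions} give $T_{i_1}(f_j) \in U^-$ for $j \neq i_1$, and the mirror skew-derivation lemma controls $T_{i_1}$ on $U^-$. Alternatively, one applies the Cartan involution $e_j \leftrightarrow f_j$, $k_j \mapsto k_j^{-1}$, which exchanges $U^+$ and $U^-$ and intertwines the relevant $T_w$ up to the appropriate variant, transporting (1) directly to (2).
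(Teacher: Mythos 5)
The paper offers no proof of this lemma: it is quoted directly from Lusztig \cite[Lemma~40.1.2]{lusztig1993introduction}, with only the remark that the result holds for any quantum Kac--Moody algebra. Measured against the cited proof, your attempt has a genuine gap at exactly the point you yourself flag as ``the one genuinely nontrivial point.'' Your inductive step rests on the claim that a homogeneous element of $U^+$ whose weight is a positive root $\beta$ with $s_{i_1}\beta \in \Phi_+$ automatically lies in the subalgebra $U^+\cap T_{i_1}^{-1}(U^+)=\ker({}_{i_1}r)$. This weight criterion is false. Already for $\g=\mathfrak{sl}_3$ and $i_1=1$: the weight space of $U^+$ at $\beta=\alpha_1+\alpha_2$ (a positive root with $s_1\beta=\alpha_2\in\Phi_+$) is two-dimensional, spanned by $e_1e_2$ and $e_2e_1$, whereas $\ker({}_{1}r)$ meets it only in the line spanned by the appropriate quantum commutator; concretely $T_1(e_2e_1)=T_1(e_2)\cdot(-k_1^{-1}f_1)\notin U^+$ even though its weight is a positive root kept positive by $s_1$. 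Membership in the kernel is not detected by the weight, and Lusztig's structural statement (Proposition~38.1.6) is exactly the equivalence ${}_{i_1}r(x)=0 \Leftrightarrow T_{i_1}(x)\in U^+$ --- which for $x=T_{w''}(e_i)$ is the very thing you are trying to prove, so it cannot close the induction on its own.

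The cited proof (and Jantzen's version of it) avoids this by peeling off a rank-two parabolic on the \emph{right} rather than a single reflection on the left. One picks a right descent $j$ of $w$ (so $w(\alpha_j)<0$, forcing $j\neq i$), writes $w=w'v$ with $v$ in $W'=\langle s_i,s_j\rangle$ and $w'$ minimal in the coset $wW'$, so that $l(w)=l(w')+l(v)$ and $w'$ sends both $\alpha_i$ and $\alpha_j$ to positive roots; the hypothesis $w(\alpha_i)>0$ forces $v(\alpha_i)>0$. An explicit rank-two computation, done case by case for $A_1\times A_1$, $A_2$, $B_2$, $G_2$, shows that $T_v(e_i)$ is a noncommutative polynomial in $e_i$ and $e_j$ alone; applying the algebra automorphism $T_{w'}$ produces the same polynomial in $T_{w'}(e_i)$ and $T_{w'}(e_j)$, both of which lie in $U^+$ by the inductive hypothesis since $l(w')<l(w)$. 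The real content is thus the rank-two formulas, not a kernel-of-skew-derivation criterion. Your deduction of (2) from (1) via the Cartan involution is fine once (1) is repaired, up to the usual bookkeeping that the involution interchanges the variants $T'_{i,e}$ and $T''_{i,-e}$ of Lusztig's automorphisms.
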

    This result is proved by any quantum Kac-Moody algebra. In particular, it also holds when we replace $W$ by the affine Weyl group associated to $\hatg$ and replace the index set $I$ by $\hat{I}$.

    \begin{proposition}\label{prop:lusztig converge}
        Let $T_w$ be the Lusztig's automorphism associated with $w \in W$. Then
        \begin{enumerate}
            \item $T_w(e_i)$ is an algebraic combination of $e_j$, $j \in I$, if $l(ws_{i}) = l(w) + 1$.
            \item $T_w(e_0)$ is an algebraic combination of $e_j$, $j \in \hat{I}$.
            \item $T_w(e_i)$ is an algebraic combination of $k_j^{-1}f_j$, $j \in I$, if $l(ws_{i}) = l(w) - 1$.
        \end{enumerate}
    \end{proposition}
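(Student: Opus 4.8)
The plan is to prove the three statements separately, in each case reducing to the already established Lemma~\ref{lemma:Lusztig U+} (in its finite or affine form) after a length computation, and in the last case after an explicit rewriting using the quantum Weyl relations. Statement (1) is immediate: it is exactly Lemma~\ref{lemma:Lusztig U+}(1) applied to $w \in W$ and $i \in I$ under the standing hypothesis $l(ws_i) = l(w)+1$, so nothing further is needed.

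For statement (2) I would view the finite element $w \in W$ as an element of the affine Weyl group associated to $\hat{\mathfrak{g}}$; its reduced expression in the $s_j$ ($j \in I$) remains reduced there, so $T_w$ is unchanged. The point is to verify the length condition $l(ws_0) = l(w)+1$ so that the affine version of Lemma~\ref{lemma:Lusztig U+}(1) applies with $i = 0 \in \hat{I}$. Using the standard criterion that $l(ws_0) = l(w)+1$ if and only if $w(\alpha_0)$ is a positive affine root, together with the description $\alpha_0 = \delta - \theta$ of the affine simple root (where $\delta$ is the basic imaginary root, fixed by $W$, and $\theta = \sum_i a_i\alpha_i$ is the highest root), I would compute
\[
w(\alpha_0) = \delta - w(\theta).
\]
Since the coefficient of $\delta$ here equals $1 > 0$, the root $w(\alpha_0)$ is positive for every $w \in W$, so $l(ws_0) = l(w)+1$ always holds. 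The affine Lemma then yields that $T_w(e_0)$ is an algebraic combination of $e_j$, $j \in \hat{I}$.

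For statement (3) the hypothesis $l(ws_i) = l(w) - 1$ says that $w' := ws_i$ satisfies $l(w') = l(w) - 1$ and $l(w) = l(w') + l(s_i)$, so the multiplicativity of Lusztig's automorphisms gives $T_w = T_{w'}T_i$. Applying $T_i(e_i) = -k_i^{-1}f_i$ from \eqref{eq:braidgroupactions} and the fact that $T_{w'}$ is an algebra automorphism,
\[
T_w(e_i) = T_{w'}(-k_i^{-1}f_i) = -T_{w'}(k_i^{-1})\,T_{w'}(f_i).
\]
Here $\gamma := w'(\alpha_i)$ is a positive root (because $l(w's_i) = l(w')+1$), and $T_{w'}(k_i^{-1}) = k_{-\gamma}$ by the action of $T_{w'}$ on the $k_\beta$. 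Since $l(w's_i) = l(w')+1$, Lemma~\ref{lemma:Lusztig U+}(2) shows that $T_{w'}(f_i)$ is an algebraic combination of the $f_j$, $j \in I$; being the image of the weight vector $f_i$, it is homogeneous of weight $-\gamma$, so each monomial $f_{j_1}\cdots f_{j_p}$ occurring satisfies $\alpha_{j_1} + \cdots + \alpha_{j_p} = \gamma$ and hence $k_{-\gamma} = k_{j_1}^{-1}\cdots k_{j_p}^{-1}$. The last step is the bookkeeping identity
\[
k_{-\gamma}\,f_{j_1}\cdots f_{j_p} = q^{N}\,(k_{j_1}^{-1}f_{j_1})\cdots(k_{j_p}^{-1}f_{j_p}), \qquad N \in \Z,
\]
obtained by commuting each $k_{j_s}^{-1}$ to the right past the remaining $f$'s via the quantum Weyl relation $k_l f_j = q_l^{-\widehat{C}_{l,j}}f_j k_l$ of \eqref{eq:quantumweylrelations}. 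This exhibits $T_w(e_i)$ as an algebraic combination of the elements $k_j^{-1}f_j$, $j \in I$.

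I expect the main obstacle to lie entirely in statement (3): parts (1) and (2) are length computations feeding into Lemma~\ref{lemma:Lusztig U+}, the only subtlety being the verification $w(\alpha_0) > 0$. The substantive content of (3) is the interplay between the Lemma, which places $T_{w'}(f_i)$ in the $f$-subalgebra, and the weight-homogeneity, which forces $T_{w'}(f_i)$ to have weight exactly $-\gamma$ matching $T_{w'}(k_i^{-1}) = k_{-\gamma}$. It is precisely this weight-matching that allows the $k$-prefactor to distribute across each monomial and collapse it into a product of $k_j^{-1}f_j$; ensuring that this holds monomial-by-monomial is the crux of the argument.
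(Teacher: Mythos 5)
Your proposal is correct and follows essentially the same route as the paper: part (1) is Lemma~\ref{lemma:Lusztig U+} directly, part (2) applies its affine version after checking $l(ws_0)=l(w)+1$, and part (3) writes $T_w=T_{w'}T_i$, applies the lemma to $T_{w'}(f_i)$, and uses the weight-homogeneity of each monomial to absorb $k_{w'(\alpha_i)}^{-1}$ into a product of the $k_j^{-1}f_j$. The only difference is that you spell out the length verification in (2) via $w(\alpha_0)=\delta-w(\theta)>0$ and the $q$-power bookkeeping in (3), both of which the paper leaves implicit.
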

    \begin{proof}
        The first case is the first point in Lemma~\ref{lemma:Lusztig U+}.
        
        The second case is obtained by applying Lemma~\ref{lemma:Lusztig U+} to $\Uqghat$. Notice that for any element $w$ whose reduced expressions involve only indices in $I$, we have $l(ws_0) = l(w) + 1$.
        
        For the third case, if $l(ws_{i}) = l(w) - 1$, then $w$ has a reduced expression ending with $s_i$: $w = w's_i$ such that $l(w) = l(w') + 1$. Then $T_w(e_i) = T_{w'}(k_i^{-1}f_i) = k_{w'(\alpha_i)}^{-1} T_{w'}(f_i)$. By Lemma~\ref{lemma:Lusztig U+}, since $l(w's_i) = l(w')+1$, $T_{w'}(f_i)$ is an algebraic combination of $f_j$, $j \in I$. Moreover, by definition of Lusztig's automorphisms, each monomial in the algebraic combination of $T_{w'}(f_i)$ has weight $w'(-\alpha_i)$. Therefore, $k_{w'(\alpha_i)}^{-1} T_{w'}(f_i)$ is an algebraic combination of $k_j^{-1}f_j$, $j \in I$.
    \end{proof}

    Recall that $V_k$ is the underlying vector space of $L(M_km)$. Let $\lambda_0 \in \mathfrak{h}^*$ be the weight of $m$.
    \begin{theorem}\label{thm:inductive system}
        For each $w \in W$, there is a well-defined $\Uqb$-action on the inductive limit $\varinjlim V_k$ of the inductive system  $(V_k, \varphi_{l,k})$, such that its weights are contained in the cone $w(\lambda_0) + w\Lambda_-$, and its $w(\lambda_0)$-weight space has dimension one, denoted by $\Ci v_0$, and the eigenvalue of $\phi_j^+(u)$-action on $v_0$ is given by $T_w(\bpsi_{i,a}^{-1}m)$.
    \end{theorem}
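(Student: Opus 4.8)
The plan is to transport the Hernandez--Jimbo construction through a Lusztig automorphism, with Proposition~\ref{prop:lusztig converge} as the essential input. Recall from Lemma~\ref{lemma:operators on limit} and the surrounding discussion that the limits of the operators $e_j$, $e_0$ and $k_j^{-1}f_j$ ($j \in I$)---the convergent \emph{building blocks}---are well defined on $\varinjlim V_k$ and satisfy every algebraic relation that holds among $e_j, e_0, k_j^{-1}f_j$ in $\Uqghat$; together with suitably renormalized Cartan operators they furnish the untwisted $\Uqb$-action. To obtain the $w$-twisted action I would let each Borel generator act on $\varinjlim V_k$ by the limit of its image under the Lusztig automorphism defining the pullback $T_w^*L(M_km)$, and let the Cartan generators act by renormalized limits of the corresponding monomials in the $k_j$.

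The key point---and the reason such limits exist at all, since a generic element of $\Uqghat$ has no well-defined limit on $\varinjlim V_k$---is that Proposition~\ref{prop:lusztig converge} places every twisted Borel generator inside the convergent building blocks. Concretely, the image of a generator $e_p$ is either a positive-root combination of the $e_j$, or a negative-root combination of the $k_j^{-1}f_j$, or (for the affine generator $e_0$) a combination of all $e_j$, $j \in \hat{I}$---exactly the three cases of Proposition~\ref{prop:lusztig converge}. Since in the Hernandez--Jimbo framework sums and products of building blocks again converge, to the sum and product of their limits, each twisted generator descends to a well-defined operator $\tilde e_p^{\,w}$ on $\varinjlim V_k$.

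First I would fix the Cartan normalization: the relevant monomial in the $k_j$ (the image of the Cartan generator $k_p$) has eigenvalues diverging along the sequence, so I renormalize by a scalar $q^{-k c_p}$, for a constant $c_p$ dictated by the single diverging direction $\omega_i$ and generalizing the factor $q_i^{-k}$ of the untwisted case, producing a convergent operator $\tilde k_p^{\,w}$. The relations of $\Uqb$ are then inherited for free: applying the algebra automorphism to the defining relations of Proposition~\ref{prop:Borel def} gives genuine relations in $\Uqghat$ among the twisted generators; rewriting these through the building blocks and invoking that the limit respects all building-block relations, they pass to $\varinjlim V_k$. The renormalizing scalars are central and of matching $q$-power on both sides of each homogeneous relation, hence cancel and leave the quantum Weyl and Serre relations intact. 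This produces the asserted $\Uqb$-action.

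It remains to read off the weights and the highest $l$-weight. A vector of ordinary weight $\eta$ acquires twisted weight $w\eta$; since the weights of $L(M_km)$ lie in $\lambda_k + \Lambda_-$ with $\lambda_k = k\omega_i + \lambda_0$, removing the divergent part $k\,w\omega_i$ via the normalization places the twisted weights in $w(\lambda_0) + w\Lambda_-$, with the top space at $w(\lambda_0)$ the image of the one-dimensional highest weight spaces, i.e. $\Ci v_0$. For the eigenvalue of $\phi_j^+(u)$ on $v_0$ I would apply Proposition~\ref{prop: braid Lusztig Chari} in each $V_k$---which gives the eigenvalue of the relevant extremal vector as $T_w(M_km)$---and then pass to the limit, using $|q| > 1$ so that the rational functions $M_km$ converge to $\bpsi_{i,a}^{-1}m$ and the substitution-type braid action $T_w$ commutes with this limit, yielding $T_w(\bpsi_{i,a}^{-1}m)$. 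The hard part, I expect, is not the formal descent of relations but the analytic bookkeeping around the Cartan normalization: verifying that the scalars $q^{-k c_p}$ simultaneously regularize the twisted Cartan generators for every $p \in \hat{I}$, are compatible with the split of the twisted generators into positive-root and negative-root types, and interact correctly with the convergence of products in the inductive limit.
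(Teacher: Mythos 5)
Your proposal is correct and follows essentially the same route as the paper's proof: Proposition~\ref{prop:lusztig converge} places the twisted generators among the convergent building blocks of Lemma~\ref{lemma:operators on limit}, the relations descend because $T_{w^{-1}}$ is an algebra automorphism, the Cartan action is completed by the renormalization $q^{\langle w(\lambda - k\omega_i),\alpha_j\rangle}$, and the highest $l$-weight is computed via Proposition~\ref{prop: braid Lusztig Chari} together with the limit of the rational functions $T_w(M_km)$ for $|q|>1$. The only cosmetic difference is that the paper works with the pullback $T_{w^{-1}}^*L(M_km)$ in the proof (so that the eigenvalue comes out as $T_w$ of the weight), a convention your argument implicitly matches.
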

    \begin{proof}
        Consider the representation $T_{w^{-1}}^*L(M_km)$ of $\Uqghat$, where each element $e_j$ acts by its image under Lusztig's automorphism $T_{w^{-1}}e_j$. By Proposition~\ref{prop:lusztig converge}, $T_{w^{-1}}e_j$ is an algebraic combination of $e_l$ and $k_l^{-1}f_l$. Therefore, by Lemma~\ref{lemma:operators on limit}, $\varphi_{l_1,k}^{-1}T_{w^{-1}}e_j\varphi_{l_2,k}$ is also of the form $C + q_i^{-2k}D + \cdots q_i^{-2kN}E$. 

        For Serre relations between $e_j$, the elements $T_{w^{-1}}e_j$ also verify these Serre relations since $T_{w^{-1}}$ is an algebra automorphism. By the same proof as in \cite[Section~4.3]{hernandez2012asymptotic} and in \cite[Theorem~3.15]{wang2023qq}, the corresponding operators $C$ in $\varphi_{l_1,k}^{-1}T_{w^{-1}}e_j\varphi_{l_2,k}$ verify these Serre relations. 

        We complete the $k_j$-actions on the space $\varinjlim V_k$ in the following way. Let $\lambda_0$ be the weight of the monomial $m$. Then weights of the module $L(M_km)$ are contained in $\lambda_0 + k\omega_i + \Lambda_-$. For each weight vector $v \in V_k = L(M_km)$ of weight $\lambda$, define new actions of Cartan elements $k_j$ on $v$ by $q^{\langle \lambda - k\omega_i, w^{-1}\alpha_j \rangle} v = q^{\langle w(\lambda - k\omega_i), \alpha_j \rangle} v$. Then these $k_j$ actions commute with $\varphi_{l,k}$, and on each $V_k$, the weights of these $k_j$-actions are contained in the cone $w(\lambda_0) + w\Lambda_-$.

        In conclusion, these operators give a $\Uqb$-action on the inductive limit $\varinjlim V_k$. Its weights are contained in the cone $w(\lambda_0) + w\Lambda_-$ and the $w(\lambda_0)$-weight space has dimension one, since each component $V_k$ is so.

        Finally we calculate the $\phi_j^+(u)$ actions on $v_0$. By Proposition~\ref{prop: braid Lusztig Chari}, the $T_{w^{-1}}\phi_{j}^+(u)$ action on $v_0$ is given by $T_w(M_km)$. It means that $\forall j \in I, n \geq 0$, each $\phi_{j,n}$-action is of the form $C + q_i^{-2k}D + \cdots$, where $C,D$ acts on $v_0$ by multiplication. If we assume $|q| > 1$, the operator $C$ can be calculated as the limit of the $I$-tuple rational function $T_w(M_km)$ when $k \to \infty$, which is nothing but $T_w(\bpsi_{i,a}^{-1}m)$.
    \end{proof}

	As a special case, when $m =1$, it is known that the dimension of each weight space of KR modules $L(M_k)$ stabilizes as $k \to \infty$ \cite{kuniba2002canonical}, we have
    \begin{corollary}\label{cor:weights of inductive limit}
        When $m = 1$, the $\Uqb$-module $\varinjlim V_k$ constructed above has a weight space decomposition with finite-dimensional weight spaces, and all weights of $\varinjlim V_k$ are contained in $w\Lambda_-$. Moreover, the zero weight space has dimension $1$, and the eigenvalue of $\phi^+_i(u)$-actions on the zero weight space is given by $T_w(\bpsi_{i,a}^{-1})$.
    \end{corollary}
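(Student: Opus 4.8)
The plan is to read off most of the statement from Theorem~\ref{thm:inductive system} by setting $m=1$, and to isolate the finiteness of the weight multiplicities as the only new ingredient. Since the monomial $m=1$ has weight $\lambda_0 = 0$, we have $w(\lambda_0)=0$, and Theorem~\ref{thm:inductive system} immediately yields three of the four claims: all weights of $\varinjlim V_k$ lie in $w(\lambda_0)+w\Lambda_- = w\Lambda_-$; the $w(\lambda_0)$-weight space, namely the zero weight space, is one-dimensional and spanned by $v_0$; and $\phi_i^+(u)$ acts on $v_0$ by $T_w(\bpsi_{i,a}^{-1}m)=T_w(\bpsi_{i,a}^{-1})$. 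It then remains only to prove that $\varinjlim V_k$ decomposes into finite-dimensional weight spaces.

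For this I would first use that the new Cartan action constructed in the proof of Theorem~\ref{thm:inductive system} commutes with the linking maps $\varphi_{l,k}$; hence these maps preserve the new weight grading, the grading passes to the colimit, and $(\varinjlim V_k)_\mu = \varinjlim_k (V_k)_\mu$ for every $\mu \in w\Lambda_-$. Next I would identify the graded pieces explicitly. A vector of original $\mathfrak{h}$-weight $\lambda$ in $V_k = L(M_k)$ carries new weight $w(\lambda - k\omega_i)$; writing a prescribed new weight as $\mu = -w\beta$ with $\beta = -w^{-1}\mu \in \oplus_i \N\alpha_i$ independent of $k$, the $\mu$-weight space of $V_k$ is exactly the original weight space $L(M_k)_{k\omega_i - \beta}$.

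Finally I would invoke the stabilization of weight multiplicities of Kirillov--Reshetikhin modules \cite{kuniba2002canonical}: for fixed $\beta$, the dimension of $L(M_k)_{k\omega_i - \beta}$ is eventually constant in $k$. Because the maps $\varphi_{l,k}$ are injective and weight-preserving, their restrictions $(V_l)_\mu \to (V_k)_\mu$ are injections of finite-dimensional spaces, and once the common dimension has stabilized each such injection is an isomorphism; therefore the filtered colimit $\varinjlim_k (V_k)_\mu$ is finite-dimensional, of the stable dimension. This yields the desired weight space decomposition and finishes the argument.

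The proof is thus essentially an assembly: the genuinely new input is the cited stabilization of Kirillov--Reshetikhin weight multiplicities, while everything else is either a direct specialization of Theorem~\ref{thm:inductive system} or the elementary observation that an injection between finite-dimensional spaces of equal dimension is bijective. The one point deserving care is the explicit matching of the new weight space of $V_k$ with a single original weight space $L(M_k)_{k\omega_i - \beta}$ whose index $\beta$ does not move with $k$; this is also precisely the step that forces the restriction to $m=1$, since the stabilization result is available exactly for these Kirillov--Reshetikhin modules.
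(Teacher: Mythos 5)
Your proposal is correct and matches the paper's (largely implicit) argument: the paper derives everything except finite-dimensionality directly from Theorem~\ref{thm:inductive system} with $m=1$, and obtains the finiteness of weight multiplicities exactly as you do, from the stabilization of weight-space dimensions of Kirillov--Reshetikhin modules cited from \cite{kuniba2002canonical} combined with the injectivity and weight-compatibility of the maps $\varphi_{l,k}$. Your explicit identification of the new $\mu$-weight space of $V_k$ with the fixed original weight space $L(M_k)_{k\omega_i-\beta}$, $\beta=-w^{-1}\mu$, is a correct and useful elaboration of a step the paper leaves to the reader.
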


    \begin{definition}
        We denote the $\Uqb$-module $\varinjlim V_k$ constructed above by $V^w_{\infty}(m)$. When the monomial $m = 1$, we simply denote it by $V^w_{\infty}$.
    \end{definition}

Let us calculate some examples of the $l$-weight $T_w(\bpsi_{i,a}^{-1}m)$.
    \begin{example}
        Let $\mathfrak{g} = \mathfrak{sl}_3$ and the fixed index $i = 1$.

        We take $L(M_km)$ to be the KR module $W_{1,q^{-2k+1}}^{(k)} = L(Y_{1,q^{-2k+1}}\cdots Y_{1,q^{-1}})$. Let $v_0$ be a highest weight vector, then 
        \[T_1(\phi^+(u)).v_0 = (q^{-k}\frac{1-q^2u}{1-q^{-2k+2}u}, q^{k}\frac{1-q^{-2k+1}u}{1-qu})v_0,\]
        \[T_2T_1(\phi^+(u)).v_0 = (1, q^{-k}\frac{1-q^{3}u}{1-q^{-2k+3}u})v_0.\]

        Suppose $q \in \mathbb{C}^*$ so that $|q| > 1$, the above eigenvalues, with the modified $k_j$-actions, converge in $\Ci \llbracket u \rrbracket$ as $k \to \infty$:
            \begin{itemize}
                \item the eigenvalues on $v_0$ in $T_1^*W_{1,q^{-2k+1}}^{(k)}$ tend to $\bpsi_{1,q^2} \bpsi_{2,q}^{-1}$,
                \item the eigenvalues on $v_0$ in $(T_1T_2)^*W_{1,q^{-2k+1}}^{(k)}= T_2^*T_1^*W_{1,q^{-2k+1}}^{(k)}$ tend to $\bpsi_{2,q^3}$.
            \end{itemize}
        
        As a consequence, we have constructed in Theorem~\ref{thm:inductive system} two infinite-dimensional representation of the subalgebra $\Uqb$ of $\mathcal{U}_q \hat{\mathfrak{sl}}_3$:
            \begin{itemize}
                \item a representation $V_{\infty}^{s_1}$ whose weights are contained in $s_1\Lambda_-$, and whose $l$-weight of the $1$-dimensional zero weight space is given by $\bpsi_{1,q^2} \bpsi_{2,q}^{-1}$,
                \item a representation $V_{\infty}^{s_2s_1}$ whose weights are contained in $s_2s_1\Lambda_-$, and whose $l$-weight of the $1$-dimensional zero weight space is given by $\bpsi_{2,q^3}$.
            \end{itemize}
    \end{example}

    We calculate another example which will be used in Section~\ref{subsec:relation Ow and O}.
    \begin{example}\label{ex:minimal affinization sl3}
    	Let $\g = \mathfrak{sl}_3$ and the fixed index $i = 1$. In this example, we take $l \in \N$ to be a fixed integer and the monomial $m = Y_{2,q^2}Y_{2,q^4}\cdots Y_{2,q^{2l}}$ independent of $k$, then 
        \[L(M_km) = L(Y_{1,q^{-2k+1}}\cdots Y_{1,q^{-1}}Y_{2,q^2}\cdots Y_{2,q^{2l}}).\] 
        
        In this example,
        \[T_1(\phi^+(u)).v_0 = (q^{-k}\frac{1-q^2u}{1-q^{-2k+2}u}, q^{k+l}\frac{1-q^{-2k+1}u}{1-q^{2l+1}u})v_0,\]
        \[T_2T_1(\phi^+(u)).v_0 = (q^l\frac{1-q^2u}{1-q^{2l+2}u}, q^{-k-l}\frac{1-q^{2l+3}u}{1-q^{-2k+3}u})v_0,\]
        \[T_1T_2T_1(\phi^+(u)).v_0 = (q^{-l}\frac{1-q^{2l+4}u}{1-q^{4}u}, q^{-k}\frac{1-q^{3}u}{1-q^{-2k+3}u})v_0.\]

        Therefore,
        \begin{itemize}
            \item when $w = s_1$, the $l$-weight of $v_0$ in the inductive limit is $e^{l\omega_2}\bpsi_{1,q^2} \bpsi_{2,q^{2l+1}}^{-1}$. 
            \item when $w = s_2s_1$, the $l$-weight of $v_0$ in the inductive limit is $e^{l(\omega_1 - \omega_2)}\bpsi_{1,q^2} \bpsi_{1,q^{2l+2}}^{-1} \bpsi_{2,q^{2l+3}}$. 
            \item when $w = s_1s_2s_1$, the $l$-weight of $v_0$ in the inductive limit is $e^{-l\omega_1}\bpsi_{1,q^{2l+4}} \bpsi_{1,q^4}^{-1} \bpsi_{2,q^3}$.
        \end{itemize}
    \end{example}

%----------------------subsection----------------------------------%
\subsection{Projective systems}
Recall that in \cite[Section~7.4]{hernandez2012asymptotic}, $\Uqb$-actions are also defined on projective limits of projective systems $(L(N_k),\Pi_{k,l})$. Here 
\[N_k = Y_{i,q_i}Y_{i,q_i^3} \cdots Y_{i,q_i^{2k-1}},\] 
and $\Pi_{k,l} : L(N_k) \to L(N_l)$ is surjective, $\forall k \geq l$.

In \cite{hernandez2012asymptotic}, it is proved that in the projective system, $\Pi_{k,l'}^{-1}e_jk_j^{-1}\Pi_{k,l}$ and $\Pi_{k,l'}^{-1}f_jk_j^{2}\Pi_{k,l}$ are linear operators of the form $C + q_i^{2k}D + \cdots + q_i^{2kN}E$ for some $N \geq 1$. Again $C,D,\cdots,E$ stand for operators independent of $k$. As a consequence, the projective limit $\varprojlim L(N_k)$ is equipped with a $\Uqb$-action which is an $l$-highest weight module of highest weight $\bpsi_{i,a}$.

Just as with inductive systems, we can generalize this construction with a Weyl group twist, except that we use Lusztig's automorphism $T_i''$ instead of $T_i$.

We write the results without repeating the proof.

\begin{theorem}\label{thm:projective systems}
	Lemma~\ref{lemma:Lusztig U+} also holds for  $T_i''$, and by repeating the proof, we have
    \begin{enumerate}
        \item Analogous to Proposition~\ref{prop:lusztig converge}, $\forall w \in W$, the element $T_w''(e_jk_j^{-1})$, for all $j \in \hat{I}$, is either an algebraic combination of $e_jk_j^{-1}$, or an algebraic combination of $f_jk_j^{2}$.
        \item Analogous to Theorem~\ref{thm:inductive system}, let $W_k$ be the underlying space of $L(N_k)$, then the projective limit $\varprojlim W_k$ is equipped with a $\Uqb$-action.
        \item Moreover, weights of this module $\varprojlim W_k$ are contained in $w\Lambda_-$, the zero weight space has dimension $1$, and the eigenvalue of $\phi_i^+(u)$-action on this $1$-dimensional space is given by $T_w(\bpsi_{i,a})$.
    \end{enumerate}
\end{theorem}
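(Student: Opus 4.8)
The plan is to establish the three statements in order, transporting the inductive construction of Theorem~\ref{thm:inductive system} to the projective setting by replacing $T_i$ with $T_i''$, the maps $\varphi_{l,k}$ with $\Pi_{k,l}$, and the stabilizing generators $e_j,\,k_j^{-1}f_j$ with $e_jk_j^{-1},\,f_jk_j^{2}$. For part (1), I would first note that Lemma~\ref{lemma:Lusztig U+} is a statement about Lusztig's automorphisms on an arbitrary quantum Kac--Moody algebra, so it applies verbatim to $T_i''$: if $l(ws_j)=l(w)+1$ then $T_w''(e_j)$ is an algebraic combination of the $e_l$ and $T_w''(f_j)$ is an algebraic combination of the $f_l$. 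I then split into two cases according to the sign of $w(\alpha_j)$, exactly as in the proof of Proposition~\ref{prop:lusztig converge}, using that $T_w''$ sends a weight-$\beta$ element to a weight-$w\beta$ element. If $l(ws_j)=l(w)+1$ -- automatic for $j=0$ by the argument used for $e_0$ in Proposition~\ref{prop:lusztig converge} -- then $T_w''(e_j)$ is a combination of monomials $e_{l_1}\cdots e_{l_r}$, each of weight $w(\alpha_j)=\sum_l b_l\alpha_l>0$; multiplying on the right by $T_w''(k_j^{-1})=k_{-w(\alpha_j)}=\prod_l k_l^{-b_l}$ and commuting the Cartan factors through, each monomial becomes a power of $q$ times the ordered product $(e_{l_1}k_{l_1}^{-1})\cdots(e_{l_r}k_{l_r}^{-1})$, so $T_w''(e_jk_j^{-1})$ is an algebraic combination of the $e_lk_l^{-1}$.

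If instead $l(ws_j)=l(w)-1$, I write $w=w's_j$ with $l(w')=l(w)-1$, so that $\gamma:=w'(\alpha_j)>0$ and $w(\alpha_j)=-\gamma$. Using $T_j''(e_j)=-f_jk_j$ gives $T_w''(e_j)=T_{w'}''(-f_jk_j)=-T_{w'}''(f_j)\,k_{\gamma}$, while $T_w''(k_j^{-1})=k_{-w(\alpha_j)}=k_{\gamma}$; hence $T_w''(e_jk_j^{-1})=-T_{w'}''(f_j)\,k_{2\gamma}$. Since $l(w's_j)=l(w')+1$, the factor $T_{w'}''(f_j)$ is a combination of monomials $f_{l_1}\cdots f_{l_r}$ of weight $-\gamma=-\sum_l b_l\alpha_l$, and $k_{2\gamma}=\prod_l k_l^{2b_l}$; commuting the Cartan factors through as before rewrites each monomial as a power of $q$ times $(f_{l_1}k_{l_1}^{2})\cdots(f_{l_r}k_{l_r}^{2})$, so $T_w''(e_jk_j^{-1})$ is an algebraic combination of the $f_lk_l^{2}$.

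With part (1) established, part (2) follows the template of Theorem~\ref{thm:inductive system}. I consider the pullback $(T_{w^{-1}}'')^*L(N_k)$, in which $e_jk_j^{-1}$ acts through $T_{w^{-1}}''(e_jk_j^{-1})$, an algebraic combination of the $e_lk_l^{-1}$ and $f_lk_l^{2}$ by part (1). The Hernandez--Jimbo result recalled above (the projective analogue of Lemma~\ref{lemma:operators on limit}, \cite{hernandez2012asymptotic}) states that $\Pi_{k,l'}^{-1}(e_lk_l^{-1})\Pi_{k,l}$ and $\Pi_{k,l'}^{-1}(f_lk_l^{2})\Pi_{k,l}$ are of the form $C+q_i^{2k}D+\cdots+q_i^{2kN}E$; since operators of this shape are closed under sums and products and the assignment of the constant term $C$ (the $k\to\infty$ limit when $|q|<1$) is multiplicative, the same holds for $\Pi_{k,l'}^{-1}T_{w^{-1}}''(e_jk_j^{-1})\Pi_{k,l}$, and the constant terms define operators on $\varprojlim W_k$. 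Because $T_{w^{-1}}''$ is an algebra automorphism the twisted generators obey the same quantum Serre relations, and these descend to the limit by multiplicativity of $C$; completing the Cartan action by the shifted assignment $v\mapsto q^{\langle w(\mu-k\omega_i),\alpha_j\rangle}v$ for $v$ of weight $\mu$ then yields a $\Uqb$-action.

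For part (3), the shifted Cartan action places every weight of $\varprojlim W_k$ in $w\Lambda_-$ and keeps the zero weight space one-dimensional, since both hold for each $W_k$. The $\phi_i^+(u)$-eigenvalue on that space is computed through the analogue of Proposition~\ref{prop: braid Lusztig Chari} for $T_i''$ (the proof there applies to all four of Lusztig's automorphisms): the relevant extremal vector carries the $l$-weight $T_w(N_k)$, and as $k\to\infty$ the rational functions $T_w(N_k)$ converge to $T_w(\bpsi_{i,a})$ by continuity of the braid action of Definition~\ref{def:barid group action on psi}. I expect the only real subtlety -- hence the main obstacle -- to lie in the descending-length case of part (1): one must verify that after commuting the Cartan factors through a noncommutative monomial in the $f_l$ of fixed weight, the outcome is genuinely a $\Ci$-linear combination of ordered products of the $f_lk_l^{2}$, i.e. that the Serre-type relations satisfied by $T_{w'}''(f_j)$ are respected by this rewriting. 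The weight bookkeeping guarantees that the total exponent of each $k_l$ matches $2b_l$, so this reduces to a direct, if slightly tedious, commutation computation.
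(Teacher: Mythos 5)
Your proposal is correct and follows exactly the route the paper intends: the paper gives no written proof of Theorem~\ref{thm:projective systems} beyond the instruction to repeat the arguments of Lemma~\ref{lemma:Lusztig U+}, Proposition~\ref{prop:lusztig converge}, Proposition~\ref{prop: braid Lusztig Chari} and Theorem~\ref{thm:inductive system} with $T_i''$ in place of $T_i$, the maps $\Pi_{k,l}$ in place of $\varphi_{l,k}$, and the stabilizing elements $e_jk_j^{-1}$, $f_jk_j^{2}$ in place of $e_j$, $k_j^{-1}f_j$, which is precisely what you carry out. Your case analysis on $l(ws_j)=l(w)\pm 1$ using $T_j''(e_j)=-f_jk_j$ and the weight bookkeeping for the Cartan factors, as well as the passage to the projective limit via the constant terms of $C+q_i^{2k}D+\cdots$, supply more detail than the paper itself and contain no gap.
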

%--------------------------------------------------------
%section
%--------------------------------------------------------

\section{Categories \texorpdfstring{$\catO^w$}{} of representations of Borel subalgebras}\label{sec:Borel subalgebras}
We define categories $\catO^w$ of representations of Borel subalgebras so that the inductive limits $V^w_{\infty}(m)$ constructed in the previous section are $\Uqb$-modules in these categories $\catO^w$. We establish a classification of simple objects in these categories (Theorem~\ref{thm:classification catOw}), and we propose a conjecture which connect the $q$-characters of representations in $\catO^w$ and the projected limits in Section~\ref{sec:limits of q characters}. 

%--------------------------subsection-----------------------%
\subsection{Category \texorpdfstring{$\catO^w$}{}}
Recall that the $\Uqb$-modules in the category $\catO$ are modules with weight space decomposition such that the weights are contained in a finite union $\bigcup_{\lambda} (\lambda +\Lambda_-)$. The simple objects in $\catO$ are classified by $l$-highest weight modules.

Motivated by \cite{frenkel2022weyl}, for any Weyl group element $w$, we define categories $\catO^w$ to be the category of $\Uqb$-modules just as $\catO$ while the weights contained in a finite union $\bigcup_{\lambda} (\lambda + w\Lambda_-)$.

    \begin{definition}\label{def:category Ow}
        Let $\Uqb$-$\mathrm{Mod}$ be the category of $\Uqb$-modules. For $w \in W$, the categories $\catO^w$ is the full subcategory of $\Uqb$-$\mathrm{Mod}$ whose objects are $\Uqb$-modules such that
        \begin{itemize}
            \item they have a weight space decomposition $V = \bigoplus_{\lambda \in \mathfrak{h}^*} V_{\lambda}$, where $V_{\lambda} := \{v \in V | k_i v = q^{\langle \alpha_i, \lambda \rangle} v\}$,
            \item with each weight space $V_{\lambda}$ of finite dimension,
            \item and all weights $\{\lambda \in \mathfrak{h}^* | V_{\lambda} \neq 0 \}$ are contained in a finite union $\cup_i (\lambda_i + w\Lambda_-)$, where $\lambda_i \in \mathfrak{h}^*$.
        \end{itemize}
    \end{definition}

In particular, when $w = e$ the neutral element, the category $\catO^{e}$ is the category $\catO$ of Hernandez-Jimbo \cite[Section~3.3]{hernandez2012asymptotic}.

By the inductive limits constructed in Theorem~\ref{thm:inductive system}, the categories $\catO^w$ are non-empty.

\begin{lemma}\label{lemma:tensor Ow}
	For each $w \in W$, the category $\catO^w$ is a tensor category which is closed under sub objects and quotients.
\end{lemma}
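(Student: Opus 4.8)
The plan is to check the three defining properties of $\catO^w$---existence of a weight decomposition with finite-dimensional weight spaces, and confinement of the weights to a finite union of shifted cones $\lambda_i + w\Lambda_-$---one at a time under submodules, quotients and tensor products. The monoidal structure itself requires essentially no work: the coproduct of $\Uqghat$ restricts to $\Uqb$ (one has $\Delta(k_i) = k_i \otimes k_i$ and $\Delta(e_i) = e_i \otimes 1 + k_i \otimes e_i$), making $\Uqb$-$\mathrm{Mod}$ monoidal, and the antipode preserves $\Uqb$ so that it is in fact a Hopf subalgebra. Thus the whole task is to show that $\catO^w$ is a tensor subcategory stable under subobjects and quotients.

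For a submodule $V' \subseteq V$ with $V \in \catO^w$, I would use that $V'$ is stable under the commuting, simultaneously diagonalizable operators $k_i$; hence $V'$ is the direct sum of its intersections with the joint $(k_i)_{i \in I}$-eigenspaces of $V$. To identify these eigenspaces with the weight spaces $V_\lambda$, I note that distinct weights occurring in an object of $\catO^w$ are separated by their $k_i$-eigenvalues: since $q$ is not a root of unity and the finite Cartan matrix $C$ is invertible, $q^{\langle \alpha_i, \lambda - \lambda' \rangle} = 1$ for all $i \in I$ forces $\lambda = \lambda'$ whenever $\lambda - \lambda'$ lies in the root lattice, and the weights of $V$ meet only finitely many cosets of the root lattice. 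Consequently $V' = \bigoplus_\lambda (V' \cap V_\lambda)$ is a weight module whose weight support is contained in that of $V$ and whose weight spaces are subspaces of finite-dimensional ones, so $V' \in \catO^w$. Dually, a quotient inherits a weight decomposition, its weights forming a subset of those of $V$ and its weight spaces being quotients of finite-dimensional ones; hence quotients also lie in $\catO^w$.

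The essential content is the tensor product. For $V, V' \in \catO^w$ the grading is additive, $(V \otimes V')_\mu = \bigoplus_{\lambda + \nu = \mu} V_\lambda \otimes V'_\nu$. If $\mathrm{wt}(V) \subseteq \bigcup_i (\lambda_i + w\Lambda_-)$ and $\mathrm{wt}(V') \subseteq \bigcup_j (\mu_j + w\Lambda_-)$, then, because $w\Lambda_-$ is a submonoid (so $w\Lambda_- + w\Lambda_- = w\Lambda_-$), the weights of $V \otimes V'$ lie in the finite union $\bigcup_{i,j} (\lambda_i + \mu_j + w\Lambda_-)$. For the finite-dimensionality of $(V \otimes V')_\mu$ I would fix $\mu$ and a pair $(i,j)$ and write $\lambda = \lambda_i + w\gamma$, $\nu = \mu_j + w\delta$ with $\gamma, \delta \in \Lambda_-$; the constraint $\lambda + \nu = \mu$ determines $w(\gamma + \delta)$, hence $\gamma + \delta$ since $w$ is invertible, and a fixed element of $\Lambda_- = \oplus_i \N(-\alpha_i)$ admits only finitely many decompositions as a sum of two elements of $\Lambda_-$. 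Thus only finitely many finite-dimensional summands $V_\lambda \otimes V'_\nu$ contribute to $(V \otimes V')_\mu$.

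The step I expect to be the main obstacle is precisely this last finiteness: it is the only point that genuinely uses the cone (monoid) structure of $w\Lambda_-$ rather than formal properties of the coproduct, and it is what guarantees that tensoring does not create infinite-dimensional weight spaces. A secondary technical point to handle with care is the separation of weights by $k_i$-eigenvalues underlying the submodule decomposition, which is exactly where the hypothesis that $q$ is not a root of unity (together with the invertibility of $C$) enters.
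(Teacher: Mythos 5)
Your proof is correct and follows the same route as the paper, which simply observes that all three closure properties follow directly from the definition of $\catO^w$; you have filled in the routine details (additivity of weights under the coproduct, the monoid structure of $w\Lambda_-$, and the finite-decomposition argument for finite-dimensionality of tensor weight spaces). Nothing in your elaboration deviates from or adds to the paper's intended argument.
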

\begin{proof}
	It follows directly from the definition that submodules, quotient modules and tensor products of $\Uqb$-modules in $\catO^w$ also have finite-dimensional weight space decompositions, whose weights are contained in a finite union $\cup_i (\lambda_i + w\Lambda_-)$. 
\end{proof}

We define the notion of $w$-highest weight modules which will be used to classify simples in the category $\catO^{w}$.

Let $\Phi$ (resp. $\Phi_+$) be the set of roots (resp. positive roots) of $\g$, and $\hat{\Phi}$ (resp. $\hat{\Phi}_+$) the set of roots (resp. positive roots) of $\hatg$.

Recall that for each real affine root $\alpha + m\delta \in \hat{\Phi}$, where $\alpha \in \Phi$ and $m \in \Z$, a root vector $E_{\alpha + m\delta} \in \Uqghat$ is defined via Lusztig's automorphisms $T''_i$ \cite{beck1994braid,beck1994convex}. Moreover, the real root vector $E_{\alpha + m\delta}$ is contained in the Borel subalgebra $\Uqb$ if and only if $\alpha + m\delta \in \hat{\Phi}_+$.

    \begin{definition}
        A $\Uqb$-module $V$ is called a $w$-highest weight representation if it is generated by a vector $v_{w} \in V$ such that 
        \begin{enumerate}
            \item $E_{\alpha + m\delta}.v_{w} = 0$ for all root vectors such that $\alpha + m\delta \in \hat{\Phi}_+ \cap (w\Phi_+ + \Z \delta)$,
            \item $\phi_{i,m}^+.v_w = \psi_{i,m}v_w$, $\forall i \in I, m \geq 0$, for some $\psi_{i,m} \in \Ci$.
        \end{enumerate}
         In this case, we call $v_w$ a $w$-highest weight vector. We call $\bpsi = (\psi_i(u))_{i \in I}$ a $w$-highest $l$-weight of $V$.
    \end{definition}

In particular, when $w = e$, an $e$-highest weight representation is nothing but what we called an $l$-highest weight representation.

	\begin{lemma}\label{lemma:w weight space dim one}
		If $V$ is a $w$-highest weight representation of $w$-highest $l$-weight $\bpsi$, then it has a unique $w$-highest weight vector $v_w$ up to a constant, and the weight space of $v_w$ has dimension one.
	\end{lemma}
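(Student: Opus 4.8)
The plan is to reduce both assertions to the single statement that the finite weight-zero part of $\Uqb$ acts on $v_w$ by scalars, and to establish this using a PBW-type basis of $\Uqb$ adapted to $w$. First I would record that $v_w$ has a definite weight: since $\phi_{i,0}^+ = k_i$ and $\psi_{i,0}$ is the constant term of $\psi_i(u)$, the vector $v_w$ lies in $V_{\lambda_w}$ with $\lambda_w = \mathrm{wt}(\bpsi)$. Because $V = \Uqb\cdot v_w$ and an element of $\Uqb$ of finite weight $\mu$ sends $v_w$ into $V_{\lambda_w+\mu}$, the grading of $\Uqb$ by the finite root lattice gives $V_{\lambda_w} = (\Uqb)_0\cdot v_w$, where $(\Uqb)_0$ denotes the weight-$0$ subspace. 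Thus it suffices to prove $(\Uqb)_0\cdot v_w = \Ci v_w$: this forces $\dim V_{\lambda_w}=1$, and any further $w$-highest weight vector has the same weight $\lambda_w$ (its $l$-weight being $\bpsi$), hence lies in $V_{\lambda_w}=\Ci v_w$ and is a scalar multiple of $v_w$.

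Next I would partition the positive affine roots $\hat{\Phi}_+$ according to the real root vectors defined through $T''_w$: real roots with finite part in $w\Phi_+$ (class A), imaginary roots $m\delta$ with $m>0$, and real roots with finite part in $w\Phi_-$ (class B). The intersection $\hat{\Phi}_+\cap(w\Phi_+ + \Z\delta)$ appearing in condition (1) is exactly class A, so every class-A root vector annihilates $v_w$, while the imaginary root vectors and the $k_i$ act on $v_w$ by scalars by condition (2). Choosing a convex order on $\hat{\Phi}_+$ in which class B is an initial segment, the imaginary roots form the middle block, and class A is a final segment, I obtain a PBW basis of $\Uqb$ whose ordered monomials factor as a class-B part, an imaginary/Cartan part, and a class-A part. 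Applying a finite-weight-$0$ such monomial to $v_w$: if its class-A part is nontrivial, the rightmost factor kills $v_w$; otherwise its finite weight is the sum of the class-B roots that occur, an element of $w\Lambda_-$. Since the $w(-\alpha_i)$ are linearly independent, the cone $w\Lambda_-$ is salient, so a sum of nonzero elements of $w\Phi_-\subset w\Lambda_-\setminus\{0\}$ cannot vanish; hence the class-B part is also empty, leaving an imaginary/Cartan monomial that scales $v_w$. This yields $(\Uqb)_0\cdot v_w=\Ci v_w$, completing the argument.

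The hard part will be the second step. As the paper stresses, $\Uqb$ admits no triangular decomposition adapted to $w$, so I cannot simply write $\Uqb = U^{w,-}\,U^0\,U^{w,+}$ and absorb the positive part as in the untwisted case. The crucial input is instead the existence of a convex order on $\hat{\Phi}_+$ in which the real root vectors with finite part in $w\Phi_-$ and those with finite part in $w\Phi_+$ are separated by the imaginary block; this guarantees that, after PBW straightening through the defining relations, the annihilating class-A vectors can always be moved to act first on $v_w$. Making this precise is exactly where the $T''_w$-construction of the root vectors $E_{\alpha+m\delta}$ and Beck's convex PBW bases must be invoked carefully, and I expect it to be the main obstacle; once a $w$-adapted convex order is secured, the salient-cone argument above closes the proof.
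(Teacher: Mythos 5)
Your argument is essentially the paper's own proof: the paper likewise invokes the PBW basis of $\Uqb$ given by root vectors ordered as (finite part in $w\Phi_-$) $\cdot$ (imaginary) $\cdot$ (finite part in $w\Phi_+$), so that monomials applied to $v_w$ either vanish, scale, or strictly lower the weight into $\lambda_0 + w\Lambda_-\setminus\{0\}$, giving both the one-dimensionality of $V_{\lambda_0}$ and the containment of weights in $\lambda_0+w\Lambda_-$. The "hard part" you flag — the existence of a $w$-adapted convex order with the three blocks separated — is exactly what the paper takes from the Lusztig--Beck construction of root vectors and convex PBW bases, citing \cite{lusztig1990finite,beck1994braid} rather than reproving it.
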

	\begin{proof}
		 Recall that we have a PBW basis of $\Uqb$ formed by root vectors \cite{lusztig1990finite,beck1994braid}: 
		\begin{itemize}
			\item $E_{-w(\alpha) + n\delta}$ ($\alpha \in \Phi_+$, $n \geq 0$ when $-w(\alpha) \in \Phi_+$, and $n > 0$ when $-w(\alpha) \in -\Phi_+$),
			\item $E_{(m\delta,i)}$ ($m \geq 0$, $i \in I$),
			\item $E_{w(\alpha) + n\delta}$ ($\alpha \in \Phi_+$, $n \geq 0$ when $w(\alpha) \in \Phi_+$, and $n > 0$ when $w(\alpha) \in -\Phi_+$),
		\end{itemize}
		where root vectors in the third line coincide with $E_{\alpha + m\delta}$ such that $\alpha + m\delta \in \hat{\Phi}_+ \cap (w\Phi_+ + \Z \delta)$. 
		
		Moreover, the imaginary root vectors in the second line coincide with  $\phi^+_{i,m}$, $i \in I, m \in \N$ \cite{beck1994braid}. 
		
		Let $\lambda_0$ be the weight of $\bpsi$. Let $v_w$ be a $w$-highest weight vector of $V$, then $v_w \in V_{\lambda_0}$. Using the PBW basis, one see that weights of the $\Uqb$-module $V$ are contained in $\lambda_0 + w\Lambda_-$ and the $\lambda_0$ weight space $V_{\lambda_0}$ is $1$-dimensional. Thus the $w$-highest weight vector $v_w$ is unique up to a constant.
	\end{proof}

    \begin{proposition}\label{prop:unique LwM}
        If there exists a $w$-highest weight representation $V$ in the category $\catO^w$ of $w$-highest $l$-weight $\bpsi$, then there is a unique irreducible $w$-highest weight representation in the category $\catO^w$ of the $w$-highest $l$-weight $\bpsi$, which is a quotient of $V$.
    \end{proposition}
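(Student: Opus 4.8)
The plan is to run the standard ``unique irreducible quotient'' argument, but adapted so that it never appeals to a triangular decomposition of $\Uqb$ (which, as emphasized in the introduction, is not available for $\catO^w$). The only structural facts I would use are that a $w$-highest weight vector generates its module by definition, that its weight space is one-dimensional (Lemma~\ref{lemma:w weight space dim one}), and that $\catO^w$ is closed under submodules and quotients (Lemma~\ref{lemma:tensor Ow}).

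First I would establish the following preliminary claim: any $w$-highest weight representation $M$ in $\catO^w$ of $w$-highest $l$-weight $\bpsi$ admits a unique maximal proper submodule, hence a unique irreducible quotient. Writing $\lambda_0$ for the weight of $\bpsi$ and $v_w$ for a $w$-highest weight vector, Lemma~\ref{lemma:w weight space dim one} gives $M_{\lambda_0} = \Ci v_w$. Any proper submodule $S \subsetneq M$ must satisfy $S \cap M_{\lambda_0} = 0$: otherwise $S$ would contain $v_w$, hence $S \supseteq \Uqb v_w = M$, contradicting properness. Since forming sums of submodules is compatible with the weight grading, the sum $N_M$ of all proper submodules still satisfies $(N_M)_{\lambda_0} = \sum_S S_{\lambda_0} = 0$, so $N_M \neq M$. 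Thus $N_M$ is the unique maximal proper submodule, $M/N_M$ is irreducible, and it lies in $\catO^w$ by Lemma~\ref{lemma:tensor Ow}; the image of $v_w$ is a $w$-highest weight vector of $l$-weight $\bpsi$. Applying this to the given $V$ produces the desired irreducible $w$-highest weight quotient $L := V/N_V$ in $\catO^w$.

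It then remains to prove uniqueness, and here I would use a diagonal argument in place of a universal object. Given any irreducible $w$-highest weight representation $L'$ in $\catO^w$ of the same $w$-highest $l$-weight $\bpsi$, with $w$-highest weight vectors $v_L \in L$ and $v_{L'} \in L'$, form $L \oplus L' \in \catO^w$ and set $v := (v_L, v_{L'})$. Since every generator appearing in the definition of a $w$-highest weight vector acts componentwise, $v$ is again a $w$-highest weight vector of $l$-weight $\bpsi$; let $M := \Uqb v$, a $w$-highest weight representation in $\catO^w$. The two coordinate projections restrict to $\Uqb$-module maps $p_1 : M \to L$ and $p_2 : M \to L'$; each is surjective because its image contains the respective cyclic generator of the irreducible target. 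Consequently $\ker p_1$ and $\ker p_2$ are maximal proper submodules of $M$, so by the preliminary claim both equal the unique maximal proper submodule $N_M$, giving $L \cong M/N_M \cong L'$.

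The main obstacle is conceptual rather than computational: without a triangular decomposition one cannot manufacture a Verma-type universal module to compare arbitrary $w$-highest weight modules, so the whole argument has to be reorganized around the single robust input that $M_{\lambda_0}$ is one-dimensional and that the weight grading passes to submodules and their sums. The only points demanding care are verifying that the diagonal vector $v$ genuinely satisfies both $w$-highest weight conditions (the vanishing $E_{\alpha+m\delta}.v = 0$ on the relevant roots and the joint $\phi^+_{i,m}$-eigenvector property), and that $M_{\lambda_0}$ stays one-dimensional, which is exactly Lemma~\ref{lemma:w weight space dim one} applied to $M$ itself.
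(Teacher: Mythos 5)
Your proof is correct, and while the existence half coincides with the paper's (both rest on Lemma~\ref{lemma:w weight space dim one} to show that the $\lambda_0$-weight space of a cyclic $w$-highest weight module is one-dimensional, hence that the sum of all proper submodules is still proper), the uniqueness half takes a genuinely different route. The paper manufactures a Verma-type universal object by hand: it realizes every $w$-highest weight module $M$ of $l$-weight $\bpsi$ as $\Uqb/I_M$, sets $I_{\bpsi} = \bigcap_M I_M$, and shows that $M(\bpsi) = \Uqb/I_{\bpsi}$ surjects onto every member of the family, so its unique irreducible quotient dominates all of them. You instead compare two candidate irreducibles $L, L'$ directly via the diagonal submodule $M = \Uqb(v_L, v_{L'}) \subseteq L \oplus L'$, whose two coordinate projections are surjections onto irreducibles and hence have the \emph{same} kernel, namely the unique maximal proper submodule of $M$. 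Your version has the small advantage that $M$ sits inside $L \oplus L' \in \catO^w$, so membership in $\catO^w$ is automatic from closure under subobjects, whereas the paper's $M(\bpsi)$ need not itself lie in $\catO^w$ and the category membership has to be recovered at the end by passing through the quotient of $V$; the paper's version has the advantage of exhibiting a single universal cyclic module that dominates the whole family at once, which is closer in spirit to the classical Verma-module picture. The only point worth making explicit in your write-up is that submodules of objects of $\catO^w$ are weight-graded (so that $(N_M)_{\lambda_0} = \sum_S S_{\lambda_0}$); this follows from the weight space decomposition and $q$ not being a root of unity, and is implicitly used in the paper's argument as well.
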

    \begin{proof}
    	For any $w$-highest weight representation $M$ in the category $\catO^w$, let $v_w \in M$ be a $w$-highest weight vector. By definition, $M$ is generated from $v_w$ as $\Uqb$-module. Thus there is a surjective morphism of $\Uqb$-modules $\Uqb \to M$ which maps $1$ to $v_w$. Let $I_M \subset \Uqb$ be the kernel of this morphism. 
        
        Consider the set $\mathcal{P}$ of all $w$-highest weight representations in the category $\catO^w$ of the given $w$-highest $l$-weight $\bpsi$. The set $\mathcal{P}$ is non-empty by assumption. 
        
        Let $I_{\bpsi} = \bigcap_{M \in \mathcal{P}} I_M$ be the intersection of kernels defined above. By definition, $I_{\bpsi}$ contains real root vectors $E_{\alpha + m\delta}$ such that $\alpha + m\delta \in \hat{\Phi}_+ \cap (w\Phi_+ + \Z \delta)$. Moreover, $I_{\bpsi}$ contains elements $\phi_{i,m}^+ - \psi_{i,m}$, $\forall i \in I, m \in \N$.
        
        Define $M(\bpsi) := \Uqb / I_{\bpsi}$, then $M(\bpsi)$ is a $w$-highest weight representation, and the image of $1$ in the quotient $\Uqb / I_{\bpsi}$ is a $w$-highest weight vector. Let $\lambda_0$ be the weight of $\bpsi$. By Lemma~\ref{lemma:w weight space dim one}, weights of the $\Uqb$-module $M(\bpsi)$ are contained in $\lambda_0 + w\Lambda_-$, and the weight space $M(\bpsi)_{\lambda_0}$ is $1$-dimensional. 
        
        Since the image of $1$ in $\Uqb / I_{\bpsi}$ is a $w$-highest weight vector of $l$-weight $\bpsi$, it is a vector in the $1$-dimensional space $M(\bpsi)_{\lambda_0}$. Thus any non-zero vector $v \in M(\bpsi)_{\lambda_0}$ generates $M(\bpsi)$. Therefore, any proper submodule of $M(\bpsi)$ can not contain the $\lambda_0$ weight space. Thus $M(\bpsi)$ admits a maximal proper submodule, which is the sum of all proper submodules of $M(\bpsi)$, denoted by $S$. 
        
        Consider the $\Uqb$-module $M(\bpsi)/S$. It follows from definition that $M(\bpsi)/S$ is a $w$-highest weight representation of $w$-highest $l$-weight $\bpsi$, and $M(\bpsi)/S$ is irreducible. Let $V$ be a $w$-highest weight representation in $\catO^w$ in the assumption, then $V \simeq \Uqb /I_V$ is a quotient of $M(\bpsi)$, thus the kernel of $M(\bpsi) \to V$ is a proper submodule of $M(\bpsi)$, which is then contained in the maximal submodule $S$. This gives a non zero morphism $V \to M(\bpsi)/S$, which must be surjective as $M(\bpsi)/S$ is irreducible. Since we have assumed $V$ to be a representation in $\catO^w$, $M(\bpsi)/S$ as a quotient of $V$ also lies in $\catO^w$.
        
        Moreover, any $w$-highest weight representation of $w$-highest $l$-weight $\bpsi$ admits $M(\bpsi)/S$ as a quotient. Thus $M(\bpsi)/S$ is the unique irreducible $w$-highest weight representation of $w$-highest $l$-weight $\bpsi$.
    \end{proof}

As a consequence of the above proposition, the following definition is well-defined.
    \begin{definition}\label{def:LwM}
        Whenever it exists, we denote $L_{w}(\bpsi)$ to be the unique irreducible $w$-highest weight representation in $\catO^w$ with $w$-highest $l$-weight $\bpsi$.
    \end{definition}

%----------------------subsection------------------------%
\subsection{Classification of simple modules in \texorpdfstring{$\catO^w$}{}}
In this section, we classify simple objects in the categories $\catO^w$.

    \begin{lemma}\label{lemma:w negative prefundamental}
        There exist in the category $\catO^w$ the irreducible $w$-highest weight representations $L_{w}(T_w(\bpsi_{i,a}^{-1}))$ and $L_{w}(T_w(\bpsi_{i,a}))$.
    \end{lemma}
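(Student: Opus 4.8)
The plan is to reduce everything to Proposition~\ref{prop:unique LwM}: to establish existence of an irreducible $w$-highest weight module $L_w(\bpsi)$ in $\catO^w$ it is enough to exhibit a \emph{single} $w$-highest weight representation lying in $\catO^w$ whose $w$-highest $l$-weight is $\bpsi$. For the two $l$-weights in the statement the natural witnesses are exactly the limit modules built in Section~\ref{sec:inductive systems}: I would use the inductive limit $V^w_\infty = \varinjlim V_k$ (with $m=1$) of Theorem~\ref{thm:inductive system} for $\bpsi = T_w(\bpsi_{i,a}^{-1})$, and the projective limit $\varprojlim W_k$ of Theorem~\ref{thm:projective systems} for $\bpsi = T_w(\bpsi_{i,a})$.

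I would first treat $T_w(\bpsi_{i,a}^{-1})$. By Corollary~\ref{cor:weights of inductive limit} the module $V^w_\infty$ already lies in $\catO^w$: its weight spaces are finite dimensional, all its weights lie in $w\Lambda_-$, its zero weight space $\Ci v_0$ is one dimensional, and $\phi_j^+(u)$ acts on $v_0$ by the $j$-th component of $T_w(\bpsi_{i,a}^{-1})$, which is exactly condition~(2) in the definition of a $w$-highest weight vector. For condition~(1) I would run a cone/weight argument: by the PBW description recalled in Lemma~\ref{lemma:w weight space dim one}, the root vectors $E_{\alpha+m\delta}$ with $\alpha+m\delta \in \hat{\Phi}_+ \cap (w\Phi_+ + \Z\delta)$ are precisely the vectors $E_{w(\beta)+n\delta}$ with $\beta \in \Phi_+$, each of finite weight $w(\beta)$. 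Since $w(\beta)$ is a nonzero nonnegative combination of the $w(\alpha_i)$ whereas $w\Lambda_-$ consists of the nonpositive ones, $w(\beta) \notin w\Lambda_-$; as $\Uqb$ respects the weight grading, $E_{w(\beta)+n\delta}$ sends the weight-$0$ space $\Ci v_0$ into the weight-$w(\beta)$ space, which is zero. Hence $v_0$ is a $w$-highest weight vector, and passing if necessary to the submodule $\Uqb.v_0$ (still in $\catO^w$ by Lemma~\ref{lemma:tensor Ow}) gives a $w$-highest weight representation in $\catO^w$ of $w$-highest $l$-weight $T_w(\bpsi_{i,a}^{-1})$. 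Proposition~\ref{prop:unique LwM} then yields $L_w(T_w(\bpsi_{i,a}^{-1}))$.

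The case of $T_w(\bpsi_{i,a})$ would be entirely parallel, with $\varprojlim W_k$ from Theorem~\ref{thm:projective systems} replacing the inductive limit. That theorem supplies a $\Uqb$-module with weights in $w\Lambda_-$, a one-dimensional zero weight space, and $\phi_i^+(u)$ acting there by $T_w(\bpsi_{i,a})$; the same cone argument shows its zero weight vector is annihilated by the relevant root vectors and is therefore a $w$-highest weight vector, and the submodule it generates together with Proposition~\ref{prop:unique LwM} produces $L_w(T_w(\bpsi_{i,a}))$.

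The appeals to Proposition~\ref{prop:unique LwM} and the cone computation are routine once the PBW basis of Lemma~\ref{lemma:w weight space dim one} is in hand. The step I expect to require the most care is confirming that the projective limit genuinely belongs to $\catO^w$, i.e.\ that its weight spaces are finite dimensional: Theorem~\ref{thm:projective systems} as stated highlights only the weights and the zero weight space, so I would supply finite-dimensionality via the stabilization argument of Hernandez--Jimbo (in the untwisted case $w=e$ this limit is the prefundamental representation, which lies in $\catO$), transported through the Lusztig twist $T_w''$.
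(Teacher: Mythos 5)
Your proposal follows essentially the same route as the paper: realize $L_w(T_w(\bpsi_{i,a}^{-1}))$ as a quotient of the submodule generated by the zero-weight vector of the inductive limit $V^w_\infty$ from Theorem~\ref{thm:inductive system}, and $L_w(T_w(\bpsi_{i,a}))$ as a subquotient of the projective limit from Theorem~\ref{thm:projective systems}, then invoke Proposition~\ref{prop:unique LwM}. The extra details you supply (the cone argument showing $v_0$ is annihilated by the root vectors $E_{\alpha+m\delta}$ with $\alpha+m\delta \in \hat{\Phi}_+ \cap (w\Phi_+ + \Z\delta)$, and the finite-dimensionality of weight spaces of the projective limit) are left implicit in the paper but are consistent with its argument.
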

    \begin{proof}
        For each fixed index $i \in I$, by Corollary~\ref{cor:weights of inductive limit}, the inductive limit $V_{\infty}^w$ constructed in Theorem~\ref{thm:inductive system} is a $\Uqb$-module in the category $\catO^w$.

        The weights of $V_{\infty}^w$ are contained in $w\Lambda_-$ and the zero weight space is $\Ci v_0$. The $l$-weight of $v_0$ is $T_w(\bpsi_{i,a}^{-1})$. Therefore, the irreducible $w$-highest weight module $L_{w}(T_w(\bpsi_{i,a}^{-1}))$ is a quotient of the submodule of $V_{\infty}^w$ generated by $v_0$. Thus $L_{w}(T_w(\bpsi_{i,a}^{-1}))$ lies in the category $\catO^w$.

        Similarly, $L_{w}(T_w(\bpsi_{i,a}))$ is a subquotient of the projective limit constructed in Theorem~\ref{thm:projective systems}. Thus $L_{w}(T_w(\bpsi_{i,a}))$ also lies in the category $\catO^w$.
    \end{proof}

    \begin{theorem}\label{thm:simples are whw}
        Any irreducible representation in $\catO^w$ is a $w$-highest weight representation.
    \end{theorem}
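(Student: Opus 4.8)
The plan is to locate a $w$-highest weight vector inside the simple module by a maximal-weight argument and then invoke simplicity. Throughout, let $V$ be a simple object of $\catO^w$, so that $V \neq 0$, and equip $\mathfrak{h}^*$ with the partial order $\nu \preceq_w \mu \iff \mu - \nu \in \sum_{i \in I}\N w(\alpha_i)$; with this order each defining cone $\lambda_i + w\Lambda_-$ is exactly $\{\nu : \nu \preceq_w \lambda_i\}$, so the support $\mathrm{wt}(V) := \{\lambda : V_\lambda \neq 0\}$ is $\preceq_w$-bounded above by the finitely many $\lambda_i$.

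First I would produce a $\preceq_w$-maximal weight. For any $\mu \in \mathrm{wt}(V)$, the up-set $S_\mu = \{\nu \in \mathrm{wt}(V) : \nu \succeq_w \mu\}$ is finite: each $\nu \in S_\mu$ satisfies $\mu \preceq_w \nu \preceq_w \lambda_i$ for some $i$, and applying $w^{-1}$ turns this into a bounded interval $0 \preceq w^{-1}(\nu - \mu) \preceq w^{-1}(\lambda_i - \mu)$ in the ordinary root order $\beta \preceq \gamma \iff \gamma - \beta \in \sum_{i\in I}\N\alpha_i$, which contains only finitely many lattice points. Since $\mu \in S_\mu \neq \emptyset$, the finite poset $S_\mu$ has a maximal element $\lambda_0$, and any $\nu \succ_w \lambda_0$ in $\mathrm{wt}(V)$ would lie in $S_\mu$, so $\lambda_0$ is in fact maximal in all of $\mathrm{wt}(V)$; as $V$ is simple, $V_{\lambda_0} \neq 0$.

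The second step is to show that $V_{\lambda_0}$ is annihilated by the raising operators and contains a common $\phi^+$-eigenvector. Using the PBW basis of $\Uqb$ recalled in the proof of Lemma~\ref{lemma:w weight space dim one}, the relevant root vectors split into the lowering vectors $E_{-w(\alpha)+n\delta}$ (finite weight $-w(\alpha)$), the imaginary vectors $E_{(m\delta,i)} = \phi_{i,m}^+$ (finite weight $0$), and the raising vectors, namely those $E_{\alpha+m\delta}$ with $\alpha + m\delta \in \hat{\Phi}_+ \cap (w\Phi_+ + \Z\delta)$, whose finite weight is $w(\alpha)$ with $\alpha \in \Phi_+$. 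Each raising vector sends $V_{\lambda_0}$ into $V_{\lambda_0 + w(\alpha)}$; since $w(\alpha) \in \sum_{i\in I}\N w(\alpha_i)\setminus\{0\}$ we have $\lambda_0 + w(\alpha) \succ_w \lambda_0$, and maximality of $\lambda_0$ forces $V_{\lambda_0 + w(\alpha)} = 0$, so every raising vector kills $V_{\lambda_0}$. The operators $\phi_{i,m}^+$ preserve the finite-dimensional space $V_{\lambda_0}$ (their finite weight is $0$) and commute pairwise, lying in the commutative subalgebra $\Uqhaffine$; as $\Ci$ is algebraically closed they admit a common eigenvector $v_w \in V_{\lambda_0}\setminus\{0\}$. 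This $v_w$ satisfies both defining conditions of a $w$-highest weight vector, with $w$-highest $l$-weight $\bpsi$ read off from its $\phi^+$-eigenvalues, and by simplicity the nonzero submodule $\Uqb.v_w$ equals $V$, so $V$ is a $w$-highest weight representation.

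The main obstacle is the first step: the finite-union-of-cones support condition of $\catO^w$ does not hand us a single top weight directly, and one must verify that the up-sets $S_\mu$ are finite by transporting the problem through $w^{-1}$ to the familiar bounded intervals in the root lattice. This finiteness is precisely where the defining support hypothesis of $\catO^w$ is used, and it is what guarantees the existence of the $\preceq_w$-maximal weight that drives the whole argument; the remaining steps (annihilation by raising vectors, simultaneous diagonalization of the commuting $\phi^+_{i,m}$, and generation by simplicity) are then routine.
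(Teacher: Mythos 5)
Your proposal is correct and follows essentially the same route as the paper's proof: locate maximal weights using the support condition of $\catO^w$, observe that the raising root vectors $E_{\alpha+m\delta}$ with $\alpha+m\delta \in \hat{\Phi}_+ \cap (w\Phi_+ + \Z\delta)$ kill such weight spaces for weight reasons, extract a common eigenvector of the commuting $\phi^+_{i,m}$ from a finite-dimensional space, and conclude by simplicity. The only cosmetic difference is that you isolate a single maximal weight via finiteness of up-sets, whereas the paper takes the direct sum of all (finitely many) maximal weight spaces; your justification of the existence of a maximal weight is if anything slightly more detailed than the paper's.
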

    \begin{proof}
        Let $V$ be an irreducible representation in $\catO^w$. We say a weight $\mu$ of $V$ is maximal if all elements in $\mu + w\Lambda_+$ other than $\mu$ are not weights of $V$. 
        
        Since $\g$ is of finite type, there can appear only finitely many maximal weights in a cone $\lambda_i + w\Lambda_-$. By definition, all weights of $V$ are contained in a finite union $\cup_i (\lambda_i + w\Lambda_-)$, thus there are finitely many maximal weights. Let $V'$ be the direct sum of these maximal weight spaces. Then $V'$ is finite-dimensional since each weight space is finite-dimensional.

        $V'$ is non zero since weights of $V$ are contained in a finite union $\cup_i (\lambda_i + w\Lambda_-)$. Moreover, any vector $v \in V'$ is killed by root vectors $E_{\alpha + m\delta}$ such that $\alpha + m\delta \in \hat{\Phi}_+ \cap (w\Phi_+ + \Z \delta)$ because $E_{\alpha + m\delta}.v \in V_{\mathrm{wt}(v)+\alpha} = 0$. Furthermore, $\phi_{i,n}^+$ preserves the space $V'$ for weight reason.

        Since the $\phi_{i,n}^+$ commute, they have a common eigenvector $v_w$ in the finite-dimensional space $V'$. Since $V$ is irreducible, the submodule generated by $v_w$ is $V$ itself. Thus $V$ is a $w$-highest weight representation.
    \end{proof}

    \begin{proposition}[{\cite[Lemma~3.9]{hernandez2012asymptotic}}]\label{prop:rational functions}
        If $V$ is a representation in the category $\catO^w$, if $v \in V$ is a common eigenvector of $\phi_j^+(u)$, whose eigenvalue is given by an $I$-tuple of formal power series $\bpsi = (\psi_i(u))_{i \in I}$. Then each $\psi_i(u) \in \Ci \llbracket u \rrbracket$ is a rational function.
    \end{proposition}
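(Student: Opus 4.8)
The plan is to adapt the strategy of Hernandez--Jimbo's \cite[Lemma~3.9]{hernandez2012asymptotic}, observing that its only essential input is the finiteness of the relevant weight spaces, which in $\catO^w$ is built into the definition. Fix $i \in I$ and let $\lambda$ be the weight of $v$; it suffices to show that the single series $\psi_i(u) = \sum_{m \geq 0}\psi_{i,m}u^m$ is rational, where $\phi_{i,m}^+.v = \psi_{i,m}v$. Recall that the Borel subalgebra $\Uqb$ contains the Drinfeld generators $x_{i,s}^+$ ($s \geq 0$) and $x_{i,m}^-$ ($m \geq 1$), so all the vectors below are well defined, and that $x_{i,m}^-$ lowers the weight by $\alpha_i$ while $x_{i,s}^+$ raises it by $\alpha_i$.

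First I would extract the numerical consequence of the Drinfeld relation $[x_{i,s}^+, x_{i,m}^-] = (\phi_{i,s+m}^+ - \phi_{i,s+m}^-)/(q_i - q_i^{-1})$. For $s \geq 0$ and $m \geq 1$ we have $s+m \geq 1$, so $\phi_{i,s+m}^- = 0$, and applying the relation to $v$ gives
\[
x_{i,s}^+ x_{i,m}^- v - x_{i,m}^- x_{i,s}^+ v = \frac{\psi_{i,s+m}}{q_i - q_i^{-1}}\, v.
\]
Choosing a linear functional $\xi$ on the finite-dimensional weight space $V_\lambda$ with $\xi(v) = 1$ and applying it, I obtain $\psi_{i,s+m}/(q_i - q_i^{-1}) = \xi(x_{i,s}^+ x_{i,m}^- v) - \xi(x_{i,m}^- x_{i,s}^+ v)$.

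The key point is that, regarded as doubly-indexed arrays in $(s,m)$, each of the two terms on the right has finite rank. Indeed all the vectors $x_{i,m}^- v$ ($m \geq 1$) lie in the single weight space $V_{\lambda-\alpha_i}$, and each $x_{i,s}^+$ sends $V_{\lambda-\alpha_i}$ into $V_\lambda$; writing $w_m = x_{i,m}^- v$ and $\eta_s = \xi\circ(x_{i,s}^+|_{V_{\lambda-\alpha_i}})$, the array $\xi(x_{i,s}^+ x_{i,m}^- v) = \eta_s(w_m)$ factors through the finite-dimensional space $V_{\lambda-\alpha_i}$ and so has rank at most $\dim V_{\lambda-\alpha_i}$. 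Symmetrically, the vectors $x_{i,s}^+ v$ lie in $V_{\lambda+\alpha_i}$, so the array $\xi(x_{i,m}^- x_{i,s}^+ v)$ has rank at most $\dim V_{\lambda+\alpha_i}$. Hence the Hankel array $(\psi_{i,s+m})_{s \geq 0,\, m \geq 1}$, being proportional to their difference, has rank at most $\dim V_{\lambda-\alpha_i} + \dim V_{\lambda+\alpha_i} < \infty$. By Kronecker's theorem (a power series is rational precisely when the Hankel matrix of its coefficients has finite rank), $\psi_i(u)$ is rational; repeating this for every $i \in I$ finishes the argument.

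The one place requiring care, and the genuine difference from the category $\catO$ case $w = e$, is that in $\catO^w$ the weights are \emph{not} bounded above, so one may not discard the term $x_{i,m}^- x_{i,s}^+ v$ by claiming that $x_{i,s}^+ v = 0$ or that $V_{\lambda+\alpha_i} = 0$. Both spaces $V_{\lambda \pm \alpha_i}$ must be retained, and the argument succeeds precisely because the defining condition of $\catO^w$ forces \emph{every} weight space, in either direction, to be finite-dimensional. It is this finiteness, rather than any boundedness of the weight cone, that powers the rank estimate, so the proof is insensitive to the choice of $w$.
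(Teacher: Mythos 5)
Your argument is correct, and it is essentially the argument behind the result you are asked to prove: the paper itself gives no proof of Proposition~\ref{prop:rational functions}, it simply cites Lemma~3.9 of Hernandez--Jimbo, whose proof for the category $\catO$ rests on exactly the ingredients you use (the Drinfeld relation $[x_{i,s}^+,x_{i,m}^-]=(\phi^+_{i,s+m}-\phi^-_{i,s+m})/(q_i-q_i^{-1})$, the vanishing of $\phi^-_{i,n}$ for $n\geq 1$, and the finite-dimensionality of $V_{\lambda\pm\alpha_i}$, packaged via a finite-rank Hankel/linear-recurrence criterion for rationality). Your closing observation is the one point that actually needs to be said and that the paper leaves implicit: the Hernandez--Jimbo argument never uses that weights are bounded above in the $\alpha_i$-direction, only that every weight space is finite-dimensional, which is part of the definition of $\catO^w$ for arbitrary $w$; this is precisely why the citation is legitimate beyond the case $w=e$.
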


Remark that the constant terms $\psi_{i,0}$ of $\psi_i(u)$ give the eigenvalue of $k_i$, thus are non-zero. Therefore, $\bpsi$ is an $I$-tuple of rational functions which are regular and non-zero at $0$.

	\begin{lemma}\label{lemma:tensor of weights}
		Let $L_w(\bpsi)$ and $L_w(\bpsi')$ be two irreducible $w$-highest weight representations in $\catO^w$. Let $v$ and $v'$ be $w$-highest weight vectors in $L_w(\bpsi)$ and $L_w(\bpsi')$ respectively. Then $v \otimes v'$ has the $l$-weight $\bpsi \bpsi'$.
	\end{lemma}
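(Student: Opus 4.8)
The plan is to realize the tensor product through the coproduct $\Delta$ of $\Uqghat$ and to extract the action of the Drinfeld Cartan currents $\phi_i^+(u)$ on $v \otimes v'$ by a weight argument inside the cone $w\Lambda_-$. Under the standard coproduct $\Delta(e_j) = e_j \otimes 1 + k_j \otimes e_j$, $\Delta(k_j) = k_j \otimes k_j$, the Borel subalgebra $\Uqb$ is a Hopf subalgebra of $\Uqghat$ and contains all $\phi_{i,m}^+$; this is the coproduct underlying the tensor structure of Lemma~\ref{lemma:tensor Ow}. Since $\phi_{i,m}^+$ has $\h^*$-weight $0$, I would write its coproduct as a finite sum of weight-homogeneous terms
\[
\Delta(\phi_{i,m}^+) = \sum_j a_j \otimes b_j, \qquad \mathrm{wt}(a_j) + \mathrm{wt}(b_j) = 0,
\]
with $a_j, b_j \in \Uqb$ weight vectors.

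First I would run the weight argument. By Lemma~\ref{lemma:w weight space dim one}, all weights of $L_w(\bpsi)$ lie in $\lambda_0 + w\Lambda_-$ (with $\lambda_0 = \mathrm{wt}(\bpsi)$) and the top space $(L_w(\bpsi))_{\lambda_0} = \Ci v$ is one-dimensional, and likewise for $L_w(\bpsi')$. Hence in $\sum_j (a_j v) \otimes (b_j v')$ a term survives only when $\mathrm{wt}(a_j) \in w\Lambda_-$ and $\mathrm{wt}(b_j) \in w\Lambda_-$. As $\mathrm{wt}(a_j) = -\mathrm{wt}(b_j)$ and the cone $w\Lambda_- = \oplus_{i} \N\, w(-\alpha_i)$ is pointed (the $w(-\alpha_i)$ being linearly independent), this forces $\mathrm{wt}(a_j) = \mathrm{wt}(b_j) = 0$. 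For such surviving terms $a_j v = \nu_j v$ and $b_j v' = \nu'_j v'$ for scalars $\nu_j, \nu'_j$, so $\Delta(\phi_{i,m}^+)(v\otimes v') = \big(\sum_j \nu_j \nu'_j\big)\, v \otimes v'$. This already proves that $v \otimes v'$ is a common eigenvector of all $\phi_{i,m}^+$, i.e.\ that it has a well-defined $l$-weight.

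It then remains to identify the eigenvalue with the $i$-th component $\psi_i(u)\psi'_i(u)$ of $\bpsi\bpsi'$. Here I would use the $w$-twisted PBW basis of Lemma~\ref{lemma:w weight space dim one}: a weight-zero element expands as a combination of monomials (lowering part)$\cdot$(imaginary part)$\cdot$(raising part). On the $w$-highest weight vector $v$ the raising factors $E_{\alpha+m\delta}$ with $\alpha + m\delta \in \hat{\Phi}_+ \cap (w\Phi_+ + \Z\delta)$ act by $0$, and a weight-zero monomial with trivial raising part must also have trivial lowering part; so only the purely imaginary component of $a_j$ contributes to $\nu_j$, and similarly for $b_j$. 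Thus $\sum_j \nu_j \nu'_j$ is governed by the projection of $\Delta(\phi_{i,m}^+)$ onto $\Uqb^0 \otimes \Uqb^0$, where $\Uqb^0 = \Ci[\phi_{i,k}^+]$ is the imaginary Cartan subalgebra. Since $\Uqb^0$ does not depend on $w$, this projection may be computed in the untwisted case, where the classical coproduct formula for the Drinfeld currents gives, after projecting onto $\Uqb^0 \otimes \Uqb^0$, exactly $\sum_{k=0}^m \phi_{i,k}^+ \otimes \phi_{i,m-k}^+$. Evaluating on $v \otimes v'$ yields $\sum_{k} \psi_{i,k}\psi'_{i,m-k}$, that is the generating series $\psi_i(u)\psi'_i(u)$, as desired.

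The main obstacle is this last step: making precise the form of the coproduct of the Drinfeld Cartan currents and its compatibility with the $w$-twisted triangular decomposition. The subtlety is that Lusztig's automorphisms $T_w$ intertwining the $w$-twisted root vectors with the standard ones are algebra but \emph{not} coalgebra morphisms, so one cannot transport the untwisted coproduct formula verbatim; it is precisely the weight argument together with the $w$-highest weight annihilation condition that reduces the computation to the $w$-independent Cartan part. I would therefore isolate the single needed statement, namely that the $\Uqb^0 \otimes \Uqb^0$-component of $\Delta(\phi_i^+(u))$ equals $\phi_i^+(u)\otimes\phi_i^+(u)$, and verify it from the explicit coproduct on Drinfeld generators (via the formulas of Damiani/Beck or Chari--Pressley), which is where the only genuine computation lies.
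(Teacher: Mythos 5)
Your proof follows essentially the same route as the paper's: both rest on the coproduct of the Drinfeld Cartan currents $\phi_i^+(u)$ and on the pointedness of the cone $w\Lambda_-$ to kill the cross terms on $v\otimes v'$. The one place where your argument is softer than the paper's is the identification of the eigenvalue. You first reduce to the weight-$(0,0)$ component of $\Delta(\phi^+_{i,m})$ and then argue that only its $\Uqb^0\otimes\Uqb^0$ part contributes, asserting that this projection "may be computed in the untwisted case since $\Uqb^0$ does not depend on $w$". As stated this is not a justification: the projection onto $\Uqb^0\otimes\Uqb^0$ depends on the choice of complement, i.e.\ on the $w$-twisted PBW decomposition, not only on the subalgebra $\Uqb^0$ itself. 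The paper sidesteps this entirely by quoting the precise form of Damiani's coproduct formula, $\Delta(\phi^+_{i,k})=\sum_{j}\phi^+_{i,k-j}\otimes\phi^+_{i,j}+\sum a\otimes b$ with $\deg(a)\in\Phi_++\Z\delta$ and $\deg(b)\in\Phi_-+\Z\delta$: every error term has \emph{nonzero} finite weight in the first tensor factor, so the cone argument alone annihilates all of them for any $w$, and the weight-$(0,0)$ component is literally the group-like part $\sum_j\phi^+_{i,k-j}\otimes\phi^+_{i,j}$ --- no Harish--Chandra-type projection through the $w$-twisted triangular decomposition is needed. In other words, the statement you correctly isolate at the end as "the only genuine computation" is exactly the content of the Damiani reference the paper cites, and once it is invoked in that sharper form your intermediate reduction becomes unnecessary.
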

	\begin{proof}
		The proof is similar to that of tensor products of $l$-highest weight modules. It is proved \cite{damiani1998r} that $\forall i \in I, k \geq 0$, the coproduct $\Delta(\phi^+_{i,k})$ has the form
		\[\Delta(\phi^+_{i,k}) = \sum_{j=0}^k \phi^+_{i,k-j} \otimes \phi^+_{i,j} + \cdots,\]
		where $\cdots$ is a sum of elements of the form $a \otimes b$ such that $a$ and $b$ are homogeneous elements in $\Uqb$ of degree $\deg(a) \in \Phi_+ + \Z \delta$ and $\deg(b) \in \Phi_- + \Z \delta$ such that $\deg(a) + \deg(b) \in \Z\delta$. Thus, either $a.v$ or $b.v'$ has a weight outside the cone $w\Lambda_-$. Therefore, at least one of $a.v$ and $b.v'$ vanishes, thus $(a \otimes b).(v \otimes v') = 0$.
		
		In conclusion, $\phi^+_i(u).(v \otimes v') = \phi^+_i(u).v \otimes  \phi^+_i(u).v'$, and thus $v \otimes v'$ has $l$-weight $\bpsi \bpsi'$.
	\end{proof}

    \begin{theorem}\label{thm:classification catOw}
        Simple modules in the category $\catO^w$ are exactly the irreducible $w$-highest weight modules $L_w(\bpsi)$ whose $w$-highest $l$-weight $\bpsi$ is an $I$-tuple of rational functions which are regular and non-zero at $0$.
    \end{theorem}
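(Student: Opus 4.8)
The plan is to establish the two inclusions separately: every simple object of $\catO^w$ is one of the asserted $L_w(\bpsi)$, and conversely every admissible $\bpsi$ is realized. The forward inclusion is short. Let $V$ be simple in $\catO^w$. By Theorem~\ref{thm:simples are whw} it is a $w$-highest weight module, so it carries a $w$-highest weight vector $v_w$ which is a common eigenvector of the $\phi^+_{i,m}$; let $\bpsi = (\psi_i(u))_{i \in I}$ be its $l$-weight. Proposition~\ref{prop:rational functions} shows each $\psi_i(u)$ is rational, and the remark following it shows the constant term $\psi_{i,0}$ is non-zero, so $\bpsi$ lies in $\mathfrak{r}$, i.e. it is an $I$-tuple of rational functions regular and non-zero at $0$. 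Since $V$ is itself an irreducible $w$-highest weight representation in $\catO^w$ of $l$-weight $\bpsi$, Proposition~\ref{prop:unique LwM} identifies $V$ with $L_w(\bpsi)$. As $\bpsi$ is recovered from $V$ as the $l$-weight of its one-dimensional highest weight space (Lemma~\ref{lemma:w weight space dim one}), distinct $\bpsi$ yield non-isomorphic simples.

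For the converse I must exhibit, for every $\bpsi \in \mathfrak{r}$, a $w$-highest weight representation in $\catO^w$ of $l$-weight $\bpsi$; Proposition~\ref{prop:unique LwM} then produces the simple $L_w(\bpsi)$ inside $\catO^w$. The crucial point is a factorization. Factoring each rational component into linear factors shows that $\mathfrak{r}$ is generated, as a group, by the constants $e^{\nu}$ ($\nu \in \mathfrak{h}^*$) together with the $\bpsi_{i,a}^{\pm 1}$. Since $T_w$ acts on $\mathfrak{r}$ as a group automorphism (Definition~\ref{def:barid group action on psi}) with $T_w(e^{\nu}) = e^{w\nu}$, applying $T_w$ to such a factorization of $T_w^{-1}\bpsi$ yields a finite product
\[
\bpsi = e^{w\mu} \prod_{(i,a)} T_w(\bpsi_{i,a})^{m_{i,a}}, \qquad \mu \in \mathfrak{h}^*,\ m_{i,a} \in \Z.
\]
Each factor is now realizable: the one-dimensional $\Uqb$-module $\Ci_{w\mu}$ (with $e_i$ acting by $0$) lies in $\catO^w$ and is $w$-highest weight of constant $l$-weight $e^{w\mu}$, while $L_w(T_w(\bpsi_{i,a}))$ and $L_w(T_w(\bpsi_{i,a}^{-1}))$ are supplied by Lemma~\ref{lemma:w negative prefundamental}.

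I then form the tensor product $W$ of $\Ci_{w\mu}$ with the corresponding copies of these modules, which lies in $\catO^w$ by Lemma~\ref{lemma:tensor Ow}, and let $v$ be the tensor of the $w$-highest weight vectors of the factors. It remains to verify that $v$ is a $w$-highest weight vector of $l$-weight $\bpsi$. Iterating Lemma~\ref{lemma:tensor of weights} shows $v$ is a common eigenvector of $\phi^+_i(u)$ with eigenvalue the product $\bpsi$. For the vanishing condition, the weights of each factor lie in its top weight plus $w\Lambda_-$, so $\mathrm{wt}(v)$ is the unique maximal weight of $W$; for $\alpha + m\delta \in \hat{\Phi}_+ \cap (w\Phi_+ + \Z\delta)$ the vector $E_{\alpha+m\delta}.v$ has weight $\mathrm{wt}(v) + \alpha$ with $\alpha \in w\Phi_+$, which lies outside $\mathrm{wt}(v) + w\Lambda_-$ because $w\Lambda_-$ is a pointed cone ($\g$ being of finite type), hence $E_{\alpha+m\delta}.v = 0$. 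Thus $v$ generates a $w$-highest weight representation in $\catO^w$ of $l$-weight $\bpsi$, and Proposition~\ref{prop:unique LwM} gives $L_w(\bpsi) \in \catO^w$, completing the classification.

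I expect the main obstacle to be the converse direction, and within it the factorization step: one must know that $T_w$ is a genuine group automorphism of $\mathfrak{r}$ so that the prefundamental-type $l$-weights $T_w(\bpsi_{i,a}^{\pm 1})$ of Lemma~\ref{lemma:w negative prefundamental}, together with the constants, still generate $\mathfrak{r}$. The remaining subtlety is confirming that the top vector of the tensor product is genuinely $w$-highest weight, where the pointedness of $w\Lambda_-$ is essential; the $l$-weight bookkeeping via the coproduct (Lemma~\ref{lemma:tensor of weights}) is then routine.
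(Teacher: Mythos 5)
Your proposal is correct and follows essentially the same route as the paper: the forward inclusion via Theorem~\ref{thm:simples are whw} and Proposition~\ref{prop:rational functions}, and the converse by factoring $T_w^{-1}(\bpsi)$ into a constant times powers of the $\bpsi_{i,a}^{\pm 1}$, applying $T_w$, and realizing $\bpsi$ as the top $l$-weight of the tensor product $\Ci_{w\mu}\otimes\bigotimes L_w(T_w(\bpsi_{i,a}^{\pm1}))^{\otimes\cdot}$ using Lemmas~\ref{lemma:w negative prefundamental}, \ref{lemma:tensor Ow}, \ref{lemma:tensor of weights} and Proposition~\ref{prop:unique LwM}. Your explicit weight-cone argument for the vanishing $E_{\alpha+m\delta}.v=0$ only spells out what the paper leaves implicit.
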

    \begin{proof}
        Given any $\bpsi$ $I$-tuple of rational functions which are regular and non-zero at $0$. By Definition~\ref{def:barid group action on psi},  $T_w^{-1}(\bpsi)$ is also an $I$-tuple of such rational functions. Recall that $\bpsi_{i,a}^{\pm}$ are denoted to be $I$-tuple of rational functions as in \eqref{eq:psi pm}. Therefore we can decompose 
        \[T_w^{-1}(\bpsi) = q^{\lambda} \prod_{(i,a) \in I \times \Ci^*} \bpsi_{i,a}^{n_{i,a}} \prod_{(j,b) \in I \times \Ci^*} \bpsi_{j,b}^{-m_{j,b}},\] 
        where  $\lambda \in \Lambda \otimes_{\Z} \Ci$ and $n_{i,a},m_{j,b} \in \N^*$ so that $n_{i,a} , m_{j,b} \neq 0$ for only finitely many $(i,a),(j,b) \in I \times \Ci^*$. Then 
        \[\bpsi = q^{w(\lambda)} \prod_{(i,a) \in I \times \Ci^*} T_w(\bpsi_{i,a})^{n_{i,a}} \prod_{(j,b) \in I \times \Ci^*} T_w(\bpsi_{j,b}^{-1})^{m_{j,b}}.\] 

        Consider the tensor product of $w$-highest weight modules 
        \begin{equation}\label{eq:tensor of reps}
        \Ci_{w(\lambda)} \otimes \bigotimes_{(i,a) \in I \times \Ci^*} L_w(T_w(\bpsi_{i,a}))^{\otimes n_{i,a}} \otimes \bigotimes_{(j,b) \in I \times \Ci^*} L_w(T_w(\bpsi_{j,b}^{-1}))^{\otimes m_{j,b}},
        \end{equation}
        where $\Ci_{w(\lambda)}$ is the one-dimensional representation of $\Uqb$ where $k_i$ acts by $q^{\langle w(\lambda) ,\alpha_i \rangle}$.

		By Lemma~\ref{lemma:w negative prefundamental}, each factor in the tensor product in \eqref{eq:tensor of reps} is a representation in $\catO^w$. By Lemma~\ref{lemma:tensor Ow}, the tensor product is also a representation in $\catO^w$. By Lemma~\ref{lemma:w weight space dim one}, the weights of the $\Uqb$-module \eqref{eq:tensor of reps} are contained in $w(\lambda) + w\Lambda_-$, and the weight space of weight $w(\lambda)$ is one-dimensional. 
		
		Let $v_w$ be a vector in this one-dimensional weight space. By Lemma~\ref{lemma:tensor of weights}, $v_w$ is an $l$-weight vector of $l$-weight $\bpsi$. The submodule of this tensor product generated by $v_w$ is a $w$-highest weight module in $\catO^w$. By Proposition~\ref{prop:unique LwM}, it admits $L_w(\bpsi)$ as a quotient and thus $L_w(\bpsi) \in \catO^w$.

        Conversely, by Theorem~\ref{thm:simples are whw}, any simple module in $\catO^w$ is of $w$-highest weight, and by Proposition~\ref{prop:rational functions}, the $w$-highest $l$-weight is such an $I$-tuple of rational functions. This completes the proof.
    \end{proof}

%---------------------------
%section
%---------------------------

\subsection{Conjectures on characters of the category \texorpdfstring{$\catO^w$}{}}\label{sec:conjectures on characters}
The appeal of the category $\catO^w$ lies in its close connection with the category $\catO$. We formulate characters for representations in $\catO^w$, and we state their conjectural relation to characters of representations in $\catO$.

\subsubsection{Usual characters}
We begin with the usual characters.

\begin{definition}
    Let $L_w(\bpsi)$ be an irreducible $w$-highest weight representation in the category $\catO^w$. Let $\lambda_0$ be the weight of $\bpsi$, then all weights of $L_w(\bpsi)$ are contained in $\lambda + w\Lambda_-$.
    
    The usual character of $L_w(\bpsi)$ is defined to be
    \[\chi(V) = \sum_{\lambda \in \mathfrak{h}^*} \dim(V_{\lambda})e^{\lambda} \in e^{\lambda_0} \overline{\mathscr{C}}_w.\]
\end{definition}

\begin{remark}\label{remark:hatO}
    We can also study Weyl group twist of the category $\widehat{\catO}$ of infinite-dimensional representations of quantum affine algebras $\Uqghat$ studied in \cite{hernandez2005representations,mukhin2014affinization}. We define the categories $\widehat{\catO}^w$ of $\Uqghat$-modules exactly the same as Definition~\ref{def:category Ow}. These categories $\widehat{\catO}^w$ are directly related to $\widehat{\catO}$ on level of algebras in the following way: Lusztig's automorphisms $T_w$ are automorphisms of the algebra $\Uqghat$, for any representation $V$ in $\widehat{\catO}$, its pull back $T_{w^{-1}}^*V$ is then a representation in $\widehat{\catO}^w$, and vice versa.

    In particular, we have 
        \[\chi(T_{w^{-1}}^*L(\bpsi)) = w(\chi(L(\bpsi)))\]
    when $\bpsi$ in an $l$-weight as in \cite[Theorem~3.7]{mukhin2014affinization}.
\end{remark}

However, this argument can not be generalized to representations of $\Uqb$. We conjecture that the above equality holds also for representations of $\Uqb$. That is, the usual character of representations in $\catO^w$ can be obtained from representations in $\catO$ simply by the Weyl group action on weights.

    \begin{conjecture}\label{conj:usual char w}
        The usual character of $L_{w}(\bpsi)$ can be calculated by:
        \[\chi(L_{w}(\bpsi)) = w(\chi(L(T_w^{-1}(\bpsi))) ).\]
        Here $w$ acts on the usual character by standard Weyl group action, and $T_w$ acts on $\mathbf{\Psi}$ by the braid group action on $\mathfrak{r}$.
    \end{conjecture}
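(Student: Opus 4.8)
The plan is to reduce the statement to the ``building blocks'' $L_w(T_w(\bpsi_{i,a}^{\pm 1}))$ and then assemble the general case through the tensor product decomposition used in the proof of Theorem~\ref{thm:classification catOw}. Writing $T_w^{-1}(\bpsi) = q^{\lambda}\prod \bpsi_{i,a}^{n_{i,a}}\prod \bpsi_{j,b}^{-m_{j,b}}$ as in that proof, the module $L(T_w^{-1}(\bpsi))$ in $\catO$ is the simple subquotient generated by the highest weight vector of $\Ci_{\lambda} \otimes \bigotimes L(\bpsi_{i,a})^{\otimes n_{i,a}}\otimes\bigotimes L(\bpsi_{j,b}^{-1})^{\otimes m_{j,b}}$ (the $w=e$ instance of \eqref{eq:tensor of reps}), while $L_w(\bpsi)$ is the simple subquotient of the $w$-twisted tensor product \eqref{eq:tensor of reps}. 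Since the standard Weyl group action $w$ is a ring automorphism of the completed character rings sending $e^{\mu} \mapsto e^{w\mu}$, it is multiplicative; so once the conjecture is known for each factor it transforms the full tensor-product character on one side into that on the other. The two genuinely independent tasks are therefore (i) the building blocks, and (ii) the passage from a tensor product to its distinguished simple subquotient.

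For (i), I would use the inductive and projective limit constructions of Section~\ref{sec:inductive systems}. The decisive observation is that the inductive limit $V_\infty^w$ of Theorem~\ref{thm:inductive system} and the untwisted limit $V_\infty = V_\infty^e$ are built on the \emph{same} underlying vector space $\varinjlim V_k$ with the \emph{same} transition maps $\varphi_{l,k}$; only the Cartan action differs. Indeed, by the definition of the modified $k_j$-action in the proof of Theorem~\ref{thm:inductive system}, a vector coming from $v \in V_k$ of $\g$-weight $\lambda$ acquires weight $\lambda - k\omega_i$ in $V_\infty$ and weight $w(\lambda - k\omega_i)$ in $V_\infty^w$. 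Hence the identity map on $\varinjlim V_k$ is a weight-space bijection $(V_\infty)_\mu \cong (V_\infty^w)_{w\mu}$, giving $\chi(V_\infty^w) = w(\chi(V_\infty))$ in $\overline{\mathscr{C}}_w$. Since $V_\infty = L(\bpsi_{i,a}^{-1})$ is the irreducible negative prefundamental representation of Hernandez--Jimbo and, by Corollary~\ref{cor:weights of inductive limit}, $L_w(T_w(\bpsi_{i,a}^{-1}))$ is the simple subquotient of $V_\infty^w$ generated by $v_0$, it remains only to check that $V_\infty^w$ is itself irreducible (equivalently, that its character equals that of its simple subquotient), which I would prove by transporting the irreducibility argument for $V_\infty$. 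The positive prefundamental case $L_w(T_w(\bpsi_{i,a}))$ is handled identically using the projective limits of Theorem~\ref{thm:projective systems}.

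The hard part is (ii), comparing the radicals of the two tensor products. A simple subquotient of a tensor product is \emph{not} determined by the characters of the factors alone, and---because there is no braid group or Weyl group action on $\Uqb$ intertwining $\catO$ with $\catO^w$---there is no formal reason for the maximal proper submodule of the $w$-twisted product \eqref{eq:tensor of reps} to have weight multiplicities obtained from those of the untwisted product by applying $w$. My proposed route is a genericity-plus-specialization argument: for spectral parameters in sufficiently general position the tensor products on both sides are irreducible, so (ii) is vacuous and the result follows from (i); one then deduces the general case by a semicontinuity argument, using that the character of a simple module in $\catO$ (resp.\ $\catO^w$) is obtained from the generic tensor-product character by subtracting contributions that jump upward only on proper subvarieties of the parameter space, compatibly on the two sides. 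Making this compatibility precise---ideally by exhibiting, at each specialization, a weight-preserving-up-to-$w$ correspondence between the relevant submodule structures---is where the real work lies, and it is the step for which the absence of an algebra-level twist is most sharply felt.

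As a consistency check and a source of partial results, one can invoke Remark~\ref{remark:hatO}: for those $\bpsi$ whose modules $L(T_w^{-1}(\bpsi))$ extend to $\Uqghat$-modules in $\widehat{\catO}$, the identity $\chi(T_{w^{-1}}^* L(T_w^{-1}(\bpsi))) = w(\chi(L(T_w^{-1}(\bpsi))))$ holds on the nose, because there $T_w$ is a genuine automorphism of $\Uqghat$. The point is precisely that the prefundamental building blocks do \emph{not} extend in this way, which is what forces the limit argument in step (i) and makes the tensor-product comparison in step (ii) the crux of the conjecture.
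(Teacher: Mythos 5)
The statement you are addressing is stated in the paper as a conjecture and is left open there: the only part the paper actually proves is the special case where $\bpsi$ is an $I$-tuple of rational functions regular at $0$ and at $\infty$ with $\psi_i(0)\psi_i(\infty)=1$, i.e.\ where $L(T_w^{-1}(\bpsi))$ is the restriction of a $\Uqghat$-module. That proof is precisely the observation you relegate to your final ``consistency check'' paragraph: one pulls back by the genuine automorphism $T_{w^{-1}}$ of $\Uqghat$, uses the Friesen--Weekes--Wendlandt computation of the eigenvalue of $T_{w^{-1}}\phi_i^{\pm}(u)$ on the highest weight vector to identify the pullback with $L_w(\bpsi)$ upon restriction to $\Uqb$, and reads off $\chi = w(\chi)$ from Remark~\ref{remark:hatO}. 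Your proposal correctly identifies this provable case, but neither of the two steps you isolate for the general case is actually closed, so what you have is a strategy rather than a proof.

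Concretely, step (i) needs $V_\infty^w$ to be irreducible (or at least to have the same character as its simple subquotient $L_w(T_w(\bpsi_{i,a}^{-1}))$); without that, the weight-space bijection $(V_\infty)_\mu \cong (V_\infty^w)_{w\mu}$ only gives an inequality of characters. You propose to obtain irreducibility ``by transporting the irreducibility argument for $V_\infty$'', but the Hernandez--Jimbo argument rests on the triangular decomposition of $\Uqb$ adapted to $\catO$, for which the paper explicitly states no analogue is known in $\catO^w$; indeed the paper derives the irreducibility of $V_\infty^w$ as a \emph{corollary of} Conjecture~\ref{conj:usual char w}, so invoking it as an input is circular unless you supply a genuinely new argument. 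Step (ii) you acknowledge is the crux and is not carried out: the genericity-plus-specialization scheme is not supported by any semicontinuity statement for characters of simple objects in $\catO^w$, and no family or deformation framework is set up in which ``proper subvarieties of the parameter space'' would make sense compatibly on the $\catO$ and $\catO^w$ sides. As you yourself note, the absence of an algebra-level twist of $\Uqb$ is exactly what blocks a formal comparison of the maximal proper submodules of the two tensor products; after your argument the statement remains a conjecture, exactly as it does in the paper.
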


	\begin{proposition}
		Conjecture~\ref{conj:usual char w} is true when $\bpsi$ is an $I$-tuple of rational functions which are regular at $0$ and at $\infty$, and such that $\psi_i(0) \psi_i(\infty) = 1$, $\forall i \in I$.
	\end{proposition}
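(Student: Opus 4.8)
The plan is to deduce the statement from the already-known relation of Remark~\ref{remark:hatO} for the category $\widehat{\catO}$, by realizing $L_w(\bpsi)$ as the restriction to $\Uqb$ of a twisted $\Uqghat$-module. First I would observe that the hypotheses on $\bpsi$---regularity at $0$ and $\infty$ together with $\psi_i(0)\psi_i(\infty)=1$---are exactly the conditions singling out the $l$-weights of $\Uqghat$-modules in $\widehat{\catO}$ to which Remark~\ref{remark:hatO} applies; for instance the prefundamental $l$-weights $\bpsi_{i,a}^{\pm 1}$, which are genuinely Borel, fail them since $\psi_i(\infty)\in\{0,\infty\}$. A short direct computation with the formulae of Definition~\ref{def:barid group action on psi}, using that evaluation at $0$ and at $\infty$ are group homomorphisms on the regular locus of $\mathfrak{r}$ intertwining the braid action with the linear $W$-action on $\mathfrak{h}^*$, shows that the class of such $\bpsi$ is stable under $T_w^{\pm 1}$. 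Hence $T_w^{-1}(\bpsi)$ again satisfies the hypotheses, $L(T_w^{-1}(\bpsi))$ is an object of $\widehat{\catO}$, and Remark~\ref{remark:hatO} gives $\chi(T_{w^{-1}}^*L(T_w^{-1}(\bpsi)))=w(\chi(L(T_w^{-1}(\bpsi))))$.

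It then suffices to identify $L_w(\bpsi)$ with the restriction to $\Uqb$ of the $\Uqghat$-module $N:=T_{w^{-1}}^*L(T_w^{-1}(\bpsi))$. Let $v_0$ be a highest weight vector of $L(T_w^{-1}(\bpsi))$. In $N$ the element $\phi_j^+(u)$ acts as $T_{w^{-1}}(\phi_j^+(u))$, so by Proposition~\ref{prop: braid Lusztig Chari} it acts on $v_0$ by $T_w^{Cha}(T_w^{-1}(\bpsi))=\bpsi$; thus $v_0$ is a common $\phi^+$-eigenvector of $l$-weight $\bpsi$. Moreover, for $\alpha+m\delta\in\hat{\Phi}_+\cap(w\Phi_++\Z\delta)$ the root vector $E_{\alpha+m\delta}$ acts on $N$, after untwisting by $T_{w^{-1}}$, as a weight-raising root vector, hence annihilates $v_0$; so $v_0$ is a $w$-highest weight vector of $w$-highest $l$-weight $\bpsi$. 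Since the Lusztig twist sends $k_i$ to $k_{w^{-1}(\alpha_i)}$, a vector of $V$-weight $\nu$ has $N$-weight $w\nu$; as the weights of $L(T_w^{-1}(\bpsi))$ lie in $\mu_0+\Lambda_-$ with $\mu_0=\mathrm{wt}(T_w^{-1}(\bpsi))=w^{-1}\mathrm{wt}(\bpsi)$, those of $N$ lie in $\mathrm{wt}(\bpsi)+w\Lambda_-$. Together with finiteness of weight multiplicities this shows $\mathrm{Res}_{\Uqb}N\in\catO^w$, and by Proposition~\ref{prop:unique LwM} it is a $w$-highest weight module admitting $L_w(\bpsi)$ as its unique simple quotient.

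The remaining, and main, point is to prove that $\mathrm{Res}_{\Uqb}N$ is already irreducible, i.e. $L_w(\bpsi)=\mathrm{Res}_{\Uqb}N$; the character identity then reads $\chi(L_w(\bpsi))=\chi(N)=w(\chi(L(T_w^{-1}(\bpsi))))$, which is the assertion. A $\Uqb$-submodule of $N$ is the same as a $T_{w^{-1}}(\Uqb)$-submodule of $L(T_w^{-1}(\bpsi))$, so irreducibility amounts to showing that the twisted Borel $T_{w^{-1}}(\Uqb)$ generates $L(T_w^{-1}(\bpsi))$ from the extremal vector $v_0$. I would establish this using the twisted PBW picture: by Proposition~\ref{prop:lusztig converge} the generators $T_{w^{-1}}(e_i)$ ($i\in\hat{I}$) include lowering operators of the form $k_j^{-1}f_j$ as well as $T_{w^{-1}}(e_0)$, and the negative root vectors $E_{-w(\alpha)+n\delta}$ of the PBW basis of $\Uqb$ in Lemma~\ref{lemma:w weight space dim one} should sweep out every weight of the cone $\mathrm{wt}(\bpsi)+w\Lambda_-$ starting from $v_0$. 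This is the twisted analogue of the known fact that a simple module of $\widehat{\catO}$ satisfying the finite-dimensional-type condition restricts to a simple $\Uqb$-module in $\catO$. I expect this generation step to be the genuine difficulty, since the twisted Borel lacks the clean triangular decomposition available in the untwisted case, and the argument must rule out any proper $T_{w^{-1}}(\Uqb)$-submodule meeting the one-dimensional extremal space $\Ci v_0$.
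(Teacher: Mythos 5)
Your overall strategy is the same as the paper's: pass to the Lusztig-twisted simple $\Uqghat$-module $N=T_{w^{-1}}^*L(T_w^{-1}(\bpsi))$, check that its restriction to $\Uqb$ is the module $L_w(\bpsi)$, and then read off the character identity from Remark~\ref{remark:hatO}. The first two paragraphs of your argument (stability of the class of $\bpsi$ under $T_w^{\pm1}$, identification of $v_0$ as a $w$-highest weight vector of $l$-weight $\bpsi$, location of the weights in $\mathrm{wt}(\bpsi)+w\Lambda_-$) match the paper's proof, except that the paper invokes a result of Friesen--Weekes--Wendlandt for the eigenvalue of $T_{w^{-1}}\phi_j^{\pm}(u)$ on $v_0$; your appeal to Proposition~\ref{prop: braid Lusztig Chari} is technically out of scope there, since that proposition is stated and proved only for finite-dimensional $L(\bpsi)$, whereas $L(T_w^{-1}(\bpsi))$ is in general an infinite-dimensional object of $\widehat{\catO}$.

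The genuine gap is in your third paragraph. First, your reduction is logically insufficient: irreducibility of $\mathrm{Res}_{\Uqb}N$ does \emph{not} amount to showing that $T_{w^{-1}}(\Uqb)$ generates $L(T_w^{-1}(\bpsi))$ from the extremal vector $v_0$. That would only show $N$ is a cyclic ($w$-highest weight) $\Uqb$-module; a proper $\Uqb$-submodule need not meet $\Ci v_0$ at all, so generation from $v_0$ rules out nothing about such submodules. What is actually needed is that \emph{every} nonzero vector generates. Second, even the weaker generation statement is only asserted ("should sweep out every weight", "I expect"), not proved. The paper closes this gap by citing the proof of \cite[Lemma~3.4]{feigin2017finite}: for any simple $\Uqghat$-module $V$, any nonzero vector of $V$ already generates $V$ as a $\Uqb$-module, so the restriction of the simple $\Uqghat$-module $T_{w^{-1}}^*V$ to $\Uqb$ is simple; the argument carries over to $\widehat{\catO}^w$. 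If you replace your generation-from-$v_0$ reduction by this stronger statement (or an independent proof of it), the rest of your argument goes through and yields $\chi(L_w(\bpsi))=\chi(N)=w(\chi(L(T_w^{-1}(\bpsi))))$ as claimed.
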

	\begin{proof}
		In this case, the $T_w^{-1}(\bpsi)$ is also such an $I$-tuple of rational functions. By \cite[Theorem~3.7]{mukhin2014affinization}, the $\Uqb$-module $L(T_w^{-1}(\bpsi))$ is the restriction of a $\Uqghat$-module to Borel subalgebra. Moreover, by the proof of \cite[Lemma~3.4]{feigin2017finite}, for any simple $\Uqghat$-module $V$, any non-zero vector $v \in V$ generates $V$ as $\Uqb$-module, thus $V$ is also simple as a $\Uqb$-module. This proof also holds for representations in the category $\catO^w$.
		
		We denote by $V = L(T_w^{-1}(\bpsi))$ and consider the $\Uqghat$-module $T_{w^{-1}}^*V$. Then $T_{w^{-1}}^*V$ is a simple $\Uqghat$-module in a category $\widehat{\catO}^w$. Let $v_0$ be a highest weight vector in $V = L(T_w^{-1}(\bpsi))$. By \cite[Theorem~6.5]{friesen2024braid}, $v_0$ is an eigenvector of $T_{w^{-1}}\phi_i^{\pm}(u)$, and the associated eigenvalue is given by $T_w(T_w^{-1}(\bpsi)) = \bpsi$. Therefore, the $w$-highest weight module $L_w(\bpsi)$ is the restriction of $T_{w^{-1}}^*V$ to Borel subalgebra $\Uqb$ which is still simple. 
		
		In particular, the usual character $\chi(L_w(\bpsi)) = \chi(T_{w^{-1}}^*V)$, which equals $w(\chi(V))$ as we have seen in Remark~\ref{remark:hatO}.
	\end{proof}

    Conjecture~\ref{conj:usual char w} implies the irreducibility of inductive limits.
    \begin{corollary}[of Conjecture~\ref{conj:usual char w}]
        The inductive limit $V^w_{\infty}$ of the inductive system in Theorem~\ref{thm:inductive system} is the irreducible $w$-highest weight modules $L_{w}(T_w(\bpsi_{i,a}^{-1}))$.
    \end{corollary}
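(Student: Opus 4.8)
The plan is to show that $V^w_{\infty}$ and $L_{w}(T_w(\bpsi_{i,a}^{-1}))$ have identical usual characters, and then to upgrade this to an isomorphism using the fact, already recorded in Lemma~\ref{lemma:w negative prefundamental}, that the latter is a subquotient of the former. Throughout write $\bpsi = T_w(\bpsi_{i,a}^{-1})$, so that $T_w^{-1}(\bpsi) = \bpsi_{i,a}^{-1}$.

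First I would compute $\chi(V^w_{\infty})$ by reduction to the untwisted case $w = e$. By the construction in Theorem~\ref{thm:inductive system}, for every $w \in W$ the module $V^w_{\infty}$ is built on the \emph{same} underlying inductive limit $\varinjlim V_k$ as $V^e_{\infty}$; only the $k_j$-action is modified, so that a vector carrying the weight $\mu$ in $V^e_{\infty}$ carries the weight $w(\mu)$ in $V^w_{\infty}$. By Corollary~\ref{cor:weights of inductive limit} all weight spaces are finite-dimensional with stabilized multiplicities, so this relabeling of weights by $w$ gives
\[
\chi(V^w_{\infty}) = w\bigl(\chi(V^e_{\infty})\bigr) \in \overline{\mathscr{C}}_w.
\]
Next I would identify $\chi(V^e_{\infty})$: the case $w = e$ is exactly the Hernandez--Jimbo construction, in which $V^e_{\infty}$ is the negative prefundamental representation, i.e.\ the irreducible $l$-highest weight module $L(\bpsi_{i,a}^{-1})$ \cite{hernandez2012asymptotic}. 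Hence $\chi(V^e_{\infty}) = \chi(L(\bpsi_{i,a}^{-1}))$, and combining with the previous display,
\[
\chi(V^w_{\infty}) = w\bigl(\chi(L(\bpsi_{i,a}^{-1}))\bigr).
\]
On the other hand, applying Conjecture~\ref{conj:usual char w} to $\bpsi = T_w(\bpsi_{i,a}^{-1})$ yields
\[
\chi(L_w(\bpsi)) = w\bigl(\chi(L(T_w^{-1}(\bpsi)))\bigr) = w\bigl(\chi(L(\bpsi_{i,a}^{-1}))\bigr),
\]
so that $\chi(V^w_{\infty}) = \chi(L_w(\bpsi))$ term by term.

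Finally I would conclude by a squeezing argument. By Lemma~\ref{lemma:w negative prefundamental}, $L_w(\bpsi)$ is a quotient of the submodule $U := \Uqb . v_0 \subseteq V^w_{\infty}$, whence for every weight $\mu$,
\[
\dim L_w(\bpsi)_{\mu} \le \dim U_{\mu} \le \dim (V^w_{\infty})_{\mu}.
\]
The equality of characters forces all these inequalities to be equalities; since $U$ and $V^w_{\infty}$ are weight modules this gives $U = V^w_{\infty}$, and since $L_w(\bpsi)$ is a quotient of $U$ with equal finite-dimensional weight spaces the quotient map is an isomorphism. Therefore $V^w_{\infty} = L_{w}(T_w(\bpsi_{i,a}^{-1}))$ is irreducible.

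The main obstacle is the very first step: making rigorous the claim that the twisted limit merely relabels weights by $w$, i.e.\ justifying $\chi(V^w_{\infty}) = w(\chi(V^e_{\infty}))$. This rests on the stabilization of Kirillov--Reshetikhin weight multiplicities and on the observation that all the twisted $\Uqb$-actions share the one underlying space $\varinjlim V_k$, with only the Cartan action rescaled. Once this and the Hernandez--Jimbo identification of $V^e_{\infty}$ are in place, the character comparison through Conjecture~\ref{conj:usual char w} and the subsequent squeezing are routine.
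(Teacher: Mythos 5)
Your proposal is correct and follows essentially the same route as the paper: identify $\chi(V^w_{\infty})=w(\chi(V^e_{\infty}))=w(\chi(L(\bpsi_{i,a}^{-1})))$ from the construction of the twisted inductive system (the paper phrases this as $\lim_k e^{-kw(\omega_i)}\chi(T_{w^{-1}}^*L(M_k))$, which is the same relabeling-of-weights observation), invoke Conjecture~\ref{conj:usual char w} to match this with $\chi(L_w(T_w(\bpsi_{i,a}^{-1})))$, and conclude by comparing the character of the subquotient from Lemma~\ref{lemma:w negative prefundamental} with that of the ambient limit. No substantive differences.
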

    \begin{proof}
        When $w=e$, it is known that the limit of the inductive system formed by $L(M_k)$ is irreducible and equals to the $l$-highest weight module $L(\bpsi_{i,a}^{-1})$ \cite[Theorem~6.1]{hernandez2012asymptotic}. In particular, the usual character of the limit equals to  
        \[\chi(V_{\infty}^e) = \chi(L(\bpsi_{i,a}^{-1})) = \lim_{k \to \infty} e^{-k\omega_i}\chi(L(M_k)).\]

        For the inductive system formed by $T_{w^{-1}}^*L(M_k)$, on one hand, the usual character of the inductive limit $\chi(V_{\infty}^w)$ equals to
        \[\chi(V_{\infty}^w) = \lim_{k \to \infty}e^{-kw(\omega_i)}\chi(T_{w^{-1}}^*L(M_k)) = \lim_{k \to \infty} w(e^{-k\omega_i}\chi(L(M_k))) = w(\chi(L(\bpsi_{i,a}^{-1}))).\]
        
        On the other hand, if Conjecture~\ref{conj:usual char w} holds, the right hand side equals to $\chi(L_{w}(T_w(\bpsi_{i,a}^{-1})))$.

        In conclusion, the inductive limit $V_{\infty}^w$ admits $L_{w}(T_w(\bpsi_{i,a}^{-1}))$ as a subquotient, while they have the same usual character. Therefore they coincide and the inductive limit is irreducible.
    \end{proof}

\subsubsection{\texorpdfstring{$q$}{}-characters}

We define the $q$-characters for the categories $\catO^w$ as for the category $\catO$ only by changing $\Lambda_-$ to $w\Lambda_-$ \cite{hernandez2012asymptotic}. Let us recall the definition.

\begin{definition}
    Let $\mathfrak{r}$ be the set of $I$-tuples of rational functions which are regular and non-zero at $0$. For $\bpsi = (\psi_i (u))_{i \in I} \in \mathfrak{r}$, denote by $\mathrm{wt}(\bpsi) \in \mathfrak{h}^*$ the weight of $\bpsi$ as in Section~\ref{subsec:recall q char}.
    Let $\mathcal{E}_l^w$ be the set of maps $c : \mathfrak{r} \to \Z$ satisfying:
    \begin{itemize}
    \item $c(\bpsi) = 0$ for all $\bpsi$ such that $\mathrm{wt}(\bpsi)$ is outside a finite union $\cup (\lambda_i + w\Lambda_-)$.
    \item For each $\lambda \in \mathfrak{h}^*$, there are finitely many $\bpsi$ such that $\mathrm{wt}(\bpsi) = \lambda$ and $c(\bpsi) \neq 0$.
    \end{itemize}

    For $\bpsi \in \mathfrak{r}$, let $[\bpsi] \in \mathcal{E}_l^w$ be the map $c$ which maps $\bpsi$ to $1$ and all other $\bpsi'$ to $0$.

    For a representation $V$ in $\catO^w$, its $q$-character is defined by
    \[\chi_q(V) = \sum_{\bpsi \in \mathfrak{r}} \dim(V_{\bpsi})[\bpsi] \in \mathcal{E}_l^w.\]
\end{definition}

We use the identification of variables $e^{\alpha}$ with $[\bpsi]$ for $\bpsi = (q^{\langle \alpha,\alpha_i \rangle})_{i \in I}$, and variables $Y_{i,a}$ with $[\bpsi]$ for $\bpsi = (1,\cdots, q_i\frac{1-q_i^{-1}au}{1-q_iau} ,\cdots,1)$. 

The structure of $q$-characters for $\catO^w$ remains to be studied. We conjecture that the $q$-character of inductive limit coincide with the projected limit conjecturally constructed in Conjecture~\ref{conj:limit of projected w norm}, and we also conjecture that their $q$-characters can always be factorized into a product of a constant part with a non-constant part. 
    \begin{conjecture}[Continue of Conjecture~\ref{conj:limit of projected w norm}]\label{conj:inductive limit and projected limit}
    	For $w \in W$, we conjecture that
    	\begin{enumerate}
    		\item for any fixed index $i \in I$ and $m$ a monomial in $Y_{j,b}$, $j \in I, b \in \Ci^*$, we have
    		\[T_w(\bpsi_{i,a}^{-1}m) \pi_{q,\infty}^w(m) = \chi_q(V_{\infty}^w(m)),\]
    		
    		\item the projected limit $\pi_{q,\infty}^w(m) \in \overline{\mathscr{CA}}_w$ factorizes into a product
    			\[\pi_{q,\infty}^w(m) = c \times a,\]
    			where $c \in \overline{\mathscr{C}}_w$ and $a \in T_w(\bpsi_{i,a}^{-1}m) \overline{\mathscr{A}}_w$.
    		
    		\item for any irreducible $w$-highest weight module $L_w(\bpsi) \in \catO^w$ of $w$-highest $l$-weight $\bpsi$, its $q$-character is an element in $\overline{\mathscr{CA}}_w$ which factorizes into a product
    		\[\chi_q(L_w(\bpsi)) = c \times a,\]
    		where 
    		\[c \in \overline{\mathscr{C}}_w\]
    		is called the constant part of $\chi_q(L_w(\bpsi))$, and 
    		\[a \in \bpsi \overline{\mathscr{A}}_w\]
    		is called the non-constant part of $\chi_q(L_w(\bpsi))$.
    	\end{enumerate}
	\end{conjecture}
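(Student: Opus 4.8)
The three assertions are logically intertwined, so the plan is to establish (1) first, read off (2) from the structure of the limit, and then reduce (3) to the basic modules via the classification of Theorem~\ref{thm:classification catOw}. For (1), I would generalize the computation of Hernandez--Jimbo \cite{hernandez2012asymptotic}, which in the case $w=e$ identifies the $q$-character of a prefundamental limit with the limit of normalized $q$-characters of KR-modules. The point of departure is that the $w$-highest weight vector of the twisted limit is the extremal vector of $L(M_km)$ of weight $w\cdot\mathrm{wt}(M_km)$, whose $l$-weight is $T_w(M_km)$ by Proposition~\ref{prop: braid Lusztig Chari}; this is exactly the monomial by which $\chi_q^w(L(M_km))=\chi_q(L(M_km))/T_w(M_km)$ is normalized in Lemma~\ref{lemma:chiqw in Aw}. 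I would then show that, under the modified Cartan action of Theorem~\ref{thm:inductive system} (which replaces the weight $\lambda$ of a vector of $V_k$ by $w(\lambda-k\omega_i)$), the $\phi^+_j(u)$-eigenvalue attached to each weight space of $V^w_{\infty}(m)$ is the $k\to\infty$ limit of the corresponding term of $\chi_q^w(L(M_km))$, and that the projection $\pi_R$ encodes precisely this degeneration: as a spectral parameter escapes to the boundary of the cone, a variable $A_{i,aq^n}^{\pm1}$ with $n<R$ freezes to the pure weight monomial $e^{\pm\alpha_i}$, while those with $n\ge R$ survive as genuine $A$-variables. Matching the two limiting processes, first $k\to\infty$ at fixed cutoff $R$ and then $R\to-\infty$, with the double limit defining $\pi_{q,\infty}^w(m)$ would then yield $\chi_q(V^w_{\infty}(m))=T_w(\bpsi_{i,a}^{-1}m)\,\pi_{q,\infty}^w(m)$.

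Once (1) is known, the factorization (2) should be read off from the limit itself: the entries of $\pi_{q,\infty}^w(m)$ separate according to whether a given variable has been frozen, contributing a factor in $\overline{\mathscr{C}}_w$, or remains active, contributing the part in $T_w(\bpsi_{i,a}^{-1}m)\overline{\mathscr{A}}_w$. I would make this rigorous by exhibiting, for each fixed $R$, a factorization of $\pi_q^{w,\geq R}(L(M_km))$ compatible with the $\Lambda$-grading, and then showing that the two factors converge separately in $\overline{\mathscr{C}}_w$ and in $\overline{\mathscr{A}}_w$ as $k\to\infty$ and $R\to-\infty$; the computations in Examples~\ref{ex:projectedlimitsl2}--\ref{ex:projectedlimitsl3} are the model for this separation of the $e^{\alpha}$-directions from the $A$-directions.

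For (3), the plan is to reduce an arbitrary simple $L_w(\bpsi)$ to the basic modules. By Theorem~\ref{thm:classification catOw}, $L_w(\bpsi)$ is the simple subquotient generated by the extremal vector inside a tensor product of the modules $L_w(T_w(\bpsi_{i,a}))$ and $L_w(T_w(\bpsi_{i,a}^{-1}))$ with a one-dimensional module. The $q$-characters of these basic factors are governed by the inductive and projective limits of Theorems~\ref{thm:inductive system} and~\ref{thm:projective systems}, hence carry the factorization $c\times a$ by parts (1)--(2) and its projective analogue. Using that $\chi_q$ is multiplicative on tensor products in $\catO^w$ (the extension of Lemma~\ref{lemma:tensor of weights} from the highest weight vector to all $l$-weight spaces, via the same upper-triangular coproduct estimate), the $\overline{\mathscr{C}}_w$- and $\overline{\mathscr{A}}_w$-factors multiply separately, so the product factorizes; the delicate point I would then have to verify is that this factorization descends to the simple subquotient $L_w(\bpsi)$, i.e.\ that the constant directions and the $A$-directions remain genuinely independent after passing to a sub-sum.

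The hard part will be the very first step of (1): computing the \emph{full} $q$-character of the twisted limit, not merely its extremal $l$-weights. Because Lusztig's automorphism $T_{w^{-1}}$ does not preserve the Drinfeld--Cartan subalgebra $\Uqhaffine$, the action of $\phi^+_{j,n}$ on the non-extremal vectors of the pullback $T_{w^{-1}}^*L(M_km)$ is not given directly by Chari's braid action, and one must instead control the projection of $T_{w^{-1}}(\phi^+_{j,n})$ onto $\Uqhaffine$. The recent identification of this projection with Chari's action by Friesen--Weekes--Wendlandt \cite{friesen2024braid} is the tool I would rely on, but transporting it through the $k\to\infty$ limit, together with the still-open convergence in Conjecture~\ref{conj:limit of projected w norm} on which the entire statement rests, is where the real difficulty lies.
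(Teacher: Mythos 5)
The statement you are trying to prove is stated in the paper as a \emph{conjecture}, and the paper offers no proof of it: it is only verified in low-rank examples (Examples~\ref{ex:projectedlimitsl2}, \ref{ex:projectedlimitsl3}, \ref{ex:more than KR case}) and, for the constant-part phenomenon, in the special case $w=w_0$ (Proposition~\ref{prop:longest w0}). Your text is therefore not comparable to a proof in the paper; it can only be judged as a standalone argument, and as such it is a program rather than a proof. The decisive gaps are the ones you yourself flag at the end, and they are not minor technicalities. First, everything rests on Conjecture~\ref{conj:limit of projected w norm}, i.e.\ on the existence of the double limit $\pi_{q,\infty}^w(m)$, which is open; without it the left-hand sides of (1) and (2) are not even defined. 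Second, the central step of (1) --- that the $\phi_j^+(u)$-spectrum on \emph{all} weight spaces of $V_\infty^w(m)$ is computed by the $k\to\infty$, $R\to-\infty$ degeneration of $\chi_q^w(L(M_km))$, with $\pi_R$ ``freezing'' the escaping $A$-variables to $e^{\pm\alpha_i}$ --- is precisely the assertion to be proved, and you give no mechanism for it beyond the extremal vector, where Proposition~\ref{prop: braid Lusztig Chari} applies. Lemma~\ref{lemma:operators on limit} controls the limits of the operators $e_j$, $e_0$, $k_j^{-1}f_j$, not of the $\phi_{j,n}^+$ on non-extremal vectors, and the Friesen--Weekes--Wendlandt result you invoke identifies a projection of $T_{w^{-1}}(\phi_{j,n}^+)$ onto $\Uqhaffine$, not its full action on the module; transporting this through the inductive limit is exactly the missing content.

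For (3) there is an additional problem: the reduction to the basic modules $L_w(T_w(\bpsi_{i,a}^{\pm 1}))$ via Theorem~\ref{thm:classification catOw} only realizes $L_w(\bpsi)$ as a subquotient of a tensor product, and a factorization $c\times a$ of the $q$-character of the ambient tensor product does not descend to an arbitrary subquotient --- you call this ``the delicate point'' but offer no argument, and the paper's own remark after the conjecture notes that even in the untwisted case $w=e$ this factorization is not known for all $\bpsi$. So your step (3) cannot go through by the route you describe without new input. In short: your outline correctly identifies the relevant ingredients (the Hernandez--Jimbo limit construction, Proposition~\ref{prop: braid Lusztig Chari}, the classification theorem, the coproduct estimate of Lemma~\ref{lemma:tensor of weights}), and it is consistent with how the paper motivates the conjecture, but each of the three assertions still contains an unproved core, and the statement remains open.
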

    
    \begin{remark}
    	Even for representations $L(\bpsi)$ in the category $\catO$ without twist, the third statement in Conjecture~\ref{conj:inductive limit and projected limit} that $\chi_q(L(\bpsi))$ has not been proved for all $\bpsi$. Nevertheless, it is proved for a large family of representations. See Remark~\ref{remark:examples of conj decomp} for further discussion.
    \end{remark}
%--------------------------------------------------------
%SECTION
%--------------------------------------------------------
\section{Further direction}\label{sec:further direction}

%--------------------------------subsection-----------------------------%
\subsection{Relation between \texorpdfstring{$\catO^w$}{} and \texorpdfstring{$\catO$}{}}\label{subsec:relation Ow and O}
A $\Uqb$-module may lie in two categories $\catO^w$ and $\catO^{w'}$ at the same time. For example, when $\g = \mathfrak{sl}_3$, the $l$-highest weight module $L(\bpsi_{1,a}^{-1})$ lies in the category $\catO$ be definition. At the same time, $L(\bpsi_{1,a}^{-1})$ lies also in the category $\catO^{s_2}$, since its usual character is known \cite[Section~4.1]{hernandez2012asymptotic}. Therefore $L(\bpsi_{1,a}^{-1}) \simeq L_{s_2}(\bpsi_{1,a}^{-1})$. For other $w \in W$, the irreducible representations $L(\bpsi_{1,a}^{-1}) \in \catO$ and $L_w(\bpsi_{1,a}^{-1}) \in \catO^w$ parameterized by the same $l$-weight $\bpsi_{1,a}^{-1}$ are two different representations. 

A natural question is to ask the relationship between $L(\bpsi) \in \catO$ and $L_w(\bpsi) \in \catO^w$ parameterized by the same $l$-weight $\bpsi$. An interesting phenomenon is that their $q$-characters are closely related. We propose a conjecture and verify it in some examples. This relationship provides us a new approach to calculate $q$-characters of $L(\bpsi) \in \catO$ which are previously unknown. 

	\begin{definition}
		For $c \in \overline{\mathscr{C}}_{w}$, let $c^{-1} \in \overline{\mathscr{C}}_{ww_0}$ be the image of $c$ under the morphism 
		\[\overline{\mathscr{C}}_{w} \to \overline{\mathscr{C}}_{ww_0}, \quad e^{\alpha} \mapsto e^{-\alpha}, \; \forall \alpha \in \Lambda.\]
		Here $w_0$ is the longest element in $W$.
	\end{definition}
	
We restrict to the case
	\[\bpsi = T_w(\bpsi_{i,a}^{-1}).\]
Let $c$ and $a$ be the constant part and the non-constant part of the protected limit $\bpsi \pi_{q,\infty}^w$ in Conjecture~\ref{conj:inductive limit and projected limit} (Recall that this projected limit is conjectured to be $\chi_q(L_w(\bpsi))$).
    \begin{conjecture}[continue of Conjecture~\ref{conj:inductive limit and projected limit}]\label{conj:OOw differs by const}
        %$\upsilon$ maps the projected limit $\pi_{q,\infty}^w$ to $\tilde{\chi}_q(L(M_w))$.
        The product $c^{-1} \times a$ is an element in $\overline{\mathscr{CA}}_e$, and 
        \[c^{-1} \times a = \chi_q(L(T_w(\bpsi_{i,a}^{-1})))\]
    \end{conjecture}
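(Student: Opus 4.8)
The plan is to realize both sides of the identity as limits extracted from the single family of Kirillov--Reshetikhin modules $L(M_k)$, and to track how the two relevant normalizations of $\chi_q(L(M_k))$ differ. The left-hand side is essentially given: by Conjecture~\ref{conj:inductive limit and projected limit} we have $\chi_q(L_w(\bpsi)) = \bpsi\,\pi_{q,\infty}^w = c\times a$, and this projected limit is assembled from the $w$-normalized characters $\chi_q^w(L(M_k)) = \chi_q(L(M_k))/T_w^{Cha}(M_k)$ via the projection $\pi_R$ and the two limits of Conjecture~\ref{conj:limit of projected w norm}.

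To produce the right-hand side I would exhibit $L(\bpsi)$ itself as a limit inside $\catO = \catO^e$. By Theorem~\ref{thm:Chari l weight}, $T_w^{Cha}(M_k)$ is the $l$-weight of the extremal weight space $L(M_k)_{w(k\omega_i)}$; since $M_k\to\bpsi_{i,a}^{-1}$ as rational functions for $|q|>1$ and the braid action on $\mathfrak{r}$ is given by continuous algebraic formulas, we have $T_w^{Cha}(M_k)\to T_w(\bpsi_{i,a}^{-1}) = \bpsi$. The simple modules $L(T_w^{Cha}(M_k))$ are objects of $\catO$ (their highest $l$-weights are non-dominant monomials, hence infinite-dimensional modules), and the aim is to show they form a system whose limit is $L(\bpsi)$, so that $\chi_q(L(\bpsi))$ is the limit of the standardly normalized characters $\chi_q(L(T_w^{Cha}(M_k)))/T_w^{Cha}(M_k)$.

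The comparison then rests on the observation that the two claimed expressions $c\times a$ and $c^{-1}\times a$ share the same non-constant factor $a$, only the constant factor being inverted. I would argue that $a$ records the intrinsic $l$-weight data --- the actual rational functions, equivalently the $A$-monomials --- of the germ of the $q$-character at a vertex whose $l$-weight tends to $\bpsi$, and is therefore common to $L_w(\bpsi)$ and to $L(\bpsi)$. By contrast, the constant factor records only the weight multiplicities of the part of $\chi_q(L(M_k))$ that runs off to infinity as $k\to\infty$: in the $w$-normalized picture this bulk escapes within the cone $w\Lambda_-$ and collapses under $\pi_R$ to $c\in\overline{\mathscr{C}}_w$, whereas for $L(\bpsi)$ the analogous bulk escapes within $\Lambda_-$ and collapses to $c^{-1}\in\overline{\mathscr{C}}_{ww_0}$. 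The reflection exchanging the two regimes is the opposition $\lambda\mapsto-\lambda$, implemented on the completed rings by $e^\alpha\mapsto e^{-\alpha}$; making this precise amounts to a $w_0$-duality statement for the $q$-characters of the finite-dimensional modules $L(M_k)$. It remains to check that $c^{-1}\times a$ genuinely lies in $\overline{\mathscr{CA}}_e$: a priori $a$ has weights in $w\Lambda_-$ and $c^{-1}$ in $ww_0\Lambda_-=-w\Lambda_-$, so the product lies only in $w\Lambda$, and one must verify that the terms conspire so that all weights fall in $\mathrm{wt}(\bpsi)+\Lambda_-$, as required of the $q$-character of an object of $\catO$.

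The main obstacle is that $\chi_q(L(\bpsi))$ for $\bpsi = T_w(\bpsi_{i,a}^{-1})$ is exactly the class of $q$-characters that is unknown and that motivates the paper, so no closed formula can be invoked; both the realization of $L(\bpsi)$ as a limit of the infinite-dimensional modules $L(T_w^{Cha}(M_k))$ and the precise $w_0$-duality that turns $c$ into $c^{-1}$ while fixing $a$ require genuinely new input, and neither is controlled by the limit constructions of Section~\ref{sec:inductive systems} alone. For concrete evidence I would fall back on type $A$, where the explicit Kirillov--Reshetikhin $q$-character formulas of \cite{feigin2017finite} permit a direct check, exactly as in Examples~\ref{ex:projectedlimitsl2} and~\ref{ex:projectedlimitsl3}; the case $w=w_0$, where the projected limit already lies in $\overline{\mathscr{C}}_{w_0}$ by Proposition~\ref{prop:longest w0}, gives the cleanest test of the inversion mechanism.
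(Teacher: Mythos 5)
The statement here is a conjecture, and the paper offers no general proof: its evidence consists of explicit verifications in types $A_1$ and $A_2$ (using the closed Kirillov--Reshetikhin $q$-character formulas of \cite{feigin2017finite}) together with Proposition~\ref{prop:longest w0}, which settles the case $w=w_0$. Your proposal is honest about this, and in its concrete content it lands on exactly the same evidence base, so on that score you match the paper. Two remarks on the parts where you go beyond it. First, your idea of realizing $L(\bpsi)$ as a limit of the simple $\catO$-modules $L(T_w^{Cha}(M_k))$, whose highest $l$-weights converge to $\bpsi=T_w(\bpsi_{i,a}^{-1})$, appears nowhere in the paper and is not known to work: the paper's only limit constructions (Section~\ref{sec:inductive systems}) produce objects of $\catO^w$, not of $\catO$, and you rightly flag this as the missing input rather than claiming it. Second, your heuristic that the inversion $c\mapsto c^{-1}$ should come from a ``$w_0$-duality'' of the $q$-characters of $L(M_k)$ is close to, but not the same as, the mechanism in the paper's one proved case: Proposition~\ref{prop:longest w0} uses only the Weyl-invariance of the \emph{usual} characters of the finite-dimensional modules $L(M_k)$ together with the Frenkel--Mukhin algorithm to show that the whole projected limit collapses into $\overline{\mathscr{C}}_{w_0}$, and then compares with the known character of a prefundamental representation; no duality of $q$-characters is invoked, and the non-constant factor $a$ is trivial in that case. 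Finally, your observation that membership of $c^{-1}\times a$ in $\overline{\mathscr{CA}}_e$ is itself nontrivial --- a priori the weights only lie in $\Lambda$ rather than in $\mathrm{wt}(\bpsi)+\Lambda_-$ --- is correct and worth keeping; it is part of what the conjecture asserts and is not addressed separately in the paper.
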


As an application, Conjecture~\ref{conj:inductive limit and projected limit} and Conjecture~\ref{conj:OOw differs by const} provide us an algorithm to calculate the normalization of the $\widetilde{Q}$-operator in the $Q\widetilde{Q}$-systems \cite{frenkel2018spectra,frenkel2021folded,wang2023qq,frenkel2023extended}.

    \begin{example}
        Let 
        \[\widetilde{\bpsi}_{i,a} = \bpsi_{i,a}^{-1} \prod_{j: C_{i,j}=-1} \bpsi_{j,aq_i} \prod_{j: C_{i,j}=-2}\bpsi_{j,a}\bpsi_{j,aq^2} \prod_{j: C_{i,j}=-3}\bpsi_{j,aq^{-1}}\bpsi_{j,aq}\bpsi_{j,aq^3} = T_{s_iw_0}(\bpsi_{\bar{i},aq_{\bar{i}}^{2-r^{\vee}h^{\vee}}}^{-1}).\] 
        Here $h^{\vee}$ is the Coxeter number of $\g$, $r^{\vee}$ is the lacing number of the Dynkin diagram of $\g$, $w_0$ is the longest element in $W$, and $\bar{i} \in I$ is such that $w_0(\alpha_i) = -\alpha_{\bar{i}}$.
        Let $X_{i,a}$ be the $l$-highest weight representation $L(\widetilde{\bpsi}_{i,a})$ in the category $\catO$. This representation is used to construct $Q\widetilde{Q}$-systems in \cite{frenkel2018spectra,frenkel2021folded,wang2023qq}, and the constant part of $\chi_q(X_{i,a})$ is of great interest.
        
        Following Conjecture~\ref{conj:inductive limit and projected limit}, we have an algorithm to calculate the $q$-character of $L_{s_iw_0}(\widetilde{\bpsi}_{i,a})$ in the category $\catO^w$. Then as an application of Conjecture~\ref{conj:OOw differs by const}, $\chi_q(X_{i,a})$ is obtained from $\chi_q(L_{s_iw_0}(\widetilde{\bpsi}_{i,a}))$ simply by changing its constant part $c$ to $c^{-1}$.
    \end{example}

Now we verify Conjecture~\ref{conj:OOw differs by const} when $\g = \sltwo$ and $\g =\mathfrak{sl}_3$ for the projected limits $\pi_{q,\infty}^w$. We will also verify it when $w = w_0$ the longest element for general $\g$ at the end. 

	\begin{example}
		In this case, the projected limits $\pi_{q,\infty}^w$ are already calculated in Example~\ref{ex:projectedlimitsl2} and Example~\ref{ex:projectedlimitsl3}. We need to compare them with the $q$-characters of $\Uqb$-modules which are known \cite{hernandez2012asymptotic,frenkel2021folded}:
		\begin{itemize}
			\item $\g = \sltwo$, $w =s_1$:
			\[\widetilde{\chi}_q(L(\bpsi_{1,q^2})) = 1 + e^{-\alpha_1} + e^{-2\alpha_1} + \cdots, \]
			which differs from the limit $\pi_{q,\infty}^{s_1}$ in Example~\ref{ex:projectedlimitsl2} by changing $e^{\alpha_1}$ to $e^{-\alpha_1}$.
			\item $\g =\mathfrak{sl}_3$, $w=s_1$:
			\[\widetilde{\chi}_q(L(\bpsi_{1,q^2} \bpsi_{2,q}^{-1})) = (1 + e^{-\alpha_1} + e^{-2\alpha_1} + \cdots )(1 + A_{2,q}^{-1} + A_{2,q}^{-1}A_{2,q^{-2}}^{-1} + \cdots),\]
			which differs from the limit $\pi_{q,\infty}^{s_1}$ in Example~\ref{ex:projectedlimitsl3} by changing $e^{\alpha_1}$ to $e^{-\alpha_1}$.
			\item $\g =\mathfrak{sl}_3$, $w=s_2s_1$:
			\[\widetilde{\chi}_q(L(\bpsi_{2,q^3})) = \sum_{M = 0}^{\infty}\sum_{N = 0}^{M} e^{-N\alpha_1 - M \alpha_2},\]
			which differs from the limit $\pi_{q,\infty}^{s_2s_1}$ in Example~\ref{ex:projectedlimitsl3} by changing $e^{\alpha_2}$ to $e^{-\alpha_2}$ and changing $e^{\alpha_1}$ to $e^{-\alpha_1}$.
		\end{itemize}
	\end{example}
	
Conjecture~\ref{conj:OOw differs by const} does not hold for general $\bpsi$ of the form $T_w(\bpsi_{i,a}^{-1}m)$ when $m$ is a general monomial in $Y_{j,b}$. Nevertheless, it remains valid for some representations other than those mentioned in Conjecture~\ref{conj:OOw differs by const}. We verify it in the following example:
    \begin{example}\label{ex:more than KR case}
        We calculate explicitly the limits when $\mathfrak{g} = \mathfrak{sl}_3$, with the highest weight 
        \[M_km =  Y_{1,aq^{-2k+1}}\cdots Y_{1,aq^{-3}}Y_{1,aq^{-1}}Y_{2,aq^{2}}\cdots Y_{2,aq^{2l}} \]
        for a fixed $l \in \N^*$ as we studied in Example~\ref{ex:minimal affinization sl3}.
            \begin{itemize}
                \item For $w=s_1$, 
                    \[\pi_{q,\infty}^{s_1}(m) = (1 + e^{\alpha_1} + e^{2\alpha_1} + \cdots )(\sum_{N = -1}^{\infty}\sum_{M = -1}^{\min(N,l)} \prod_{j = 0}^{N}\prod_{s = 0}^{M} A_{2,q^{2l+1-2j}}^{-1} A_{1,q^{2l+2-2s}}^{-1} ).\]
                Here $\prod_{0}^{-1}$ is understood to be $1$.

                Compare this limit with the $q$-character of $\Uqb$-module $L(\bpsi_{1,q^2} \bpsi_{2,q^{2l+1}}^{-1})$:
                    \[\widetilde{\chi}_q(L(\bpsi_{1,q^2} \bpsi_{2,q^{2l+1}}^{-1})) =  (1 + e^{-\alpha_1} + e^{-2\alpha_1} + \cdots )(\sum_{N = -1}^{\infty}\sum_{M = -1}^{\min(N,l)} \prod_{j = 0}^{N}\prod_{s = 0}^{M} A_{2,q^{2l+1-2j}}^{-1} A_{1,q^{2l+2-2s}}^{-1} ).\]
                They differ by changing $e^{\alpha_1}$ to $e^{-\alpha_1}$ in the constant part.

                \item For $w=s_2s_1$, 
                    \[\pi_{q,\infty}^{s_2s_1}(m) = (\sum_{M = 0}^{\infty}\sum_{N = 0}^{M} e^{N\alpha_1 + M \alpha_2})(1 + A_{1,q^{2l+2}}^{-1} + \cdots + \prod_{s=0}^{l}A_{1,q^{2l+2-2s}}^{-1} ). \]
                Compare this limit with the $q$-character of $\Uqb$-module $L(\bpsi_{1,q^2} \bpsi_{1,q^{2l+2}}^{-1} \bpsi_{2,q^{2l+3}})$: 
                    \[\widetilde{\chi}_q(L(\bpsi_{1,q^2} \bpsi_{1,q^{2l+2}}^{-1} \bpsi_{2,q^{2l+3}})) = (\sum_{M = 0}^{\infty}\sum_{N = 0}^{M} e^{-N\alpha_1 - M \alpha_2})(1 + A_{1,q^{2l+2}}^{-1} + \cdots + \prod_{s=0}^{l}A_{1,q^{2l+2-2s}}^{-1} ).\]
                They differs by changing $e^{\alpha_1}$ to $e^{-\alpha_1}$ and changing $e^{\alpha_2}$ to $e^{-\alpha_2}$ in the constant part.
    
                %\item For $T_1T_2T_1$ acting on $V_{k,l}$,\[\lim \frac{\chi_q(T_1^*T_2^*T_1^*V_{k,l})}{T_1T_2T_1M_{k,l}} =  \]
            \end{itemize}
    \end{example}

	\begin{remark}
		The above conjecture states a possible relation between $\chi_q(L_w(\bpsi))$ and $\chi_q(L(\bpsi))$, which are classified by the same $l$-weight $\bpsi$. Another further direction is to study the relation between $\chi_q(L_w(T_w(\bpsi)))$ and $\chi_q(L(\bpsi))$, which are classified by two $l$-weights differing by the braid group action on $\mathfrak{r}$. It is expected that the later relation is connected to the Weyl group action of Frenkel-Hernandez, as discussed in \cite[Section~4.3]{frenkel2023extended}. 
	\end{remark}

%------------------------------subsection-------------------------------%

\subsection{Relation with shifted quantum affine algebras}\label{sec:shiftedQAA}
Shifted quantum affine algebras are defined to study Coulomb branches of 3d $N= 4$ SUSY gauge theories \cite{braverman2019coulomb,finkelberg2019multiplicative}. The representation theory of shifted quantum affine algebras have very rich structure and are found related to many different domains \cite{hernandez2023representations,frenkel2023extended,geiss2024representations}. For example, the representations of shifted quantum affine algebras are related to the representations of quantum affine Borel algebras $\Uqb$. One approach to this relation is the induced representation defined in \cite[Section~7.2]{hernandez2023representations}.

Here we propose a possible explanation of the third point in Conjecture~\ref{conj:inductive limit and projected limit} for representations in $\catO$, via shifted quantum affine algebras. 

More precisely, we conjecture that the non-constant part $a$ coincides with $q$-character of representations of shifted quantum affine algebras, and the constant part $c$ can be given by an explicit formula below.

For any coweight $\mu \in P^{\vee} = \oplus_{i \in I} \Z \omega_i^{\vee}$, let $U_q^{\mu}$ be the shifted quantum affine algebras defined as in \cite[Section~3.1]{hernandez2023representations}, where the notion of $l$-highest weight representations of $U_q^{\mu}$ are also defined. These representations are classified by the highest $l$-weight $\bpsi$, such that $\psi_i(u)$ is a rational function of degree $\alpha_i(\mu)$ \cite[Theorem~4.13]{hernandez2023representations}. Denote by $L^{\mu}(\bpsi)$ the irreducible $l$-highest weight representation of $U_q^{\mu}$.

\begin{conjecture}\label{conj:nonconst part sQAA}
	Let $L(\bpsi)$ be an irreducible $l$-highest weight module of $\Uqb$. Let $L^{\mu}(\bpsi)$ be the irreducible $l$-highest weight representation of the shifted quantum affine algebra $U_q^{\mu}$, where $\mu \in P^{\vee}$ is the determined by the degree of $\bpsi$. Then
	\[\chi_q(L(\bpsi)) = c \times a,\]
	where
	\begin{equation}\label{eq:explicit formula c}
	c = \prod_{\alpha \in \Delta_+}(\frac{1}{1-e^{-\alpha}})^{\max(0,\alpha(\mu))}, \end{equation}
	and 
	\[a = \chi_q(L^{\mu}(\mathbf{\Psi})).\]
\end{conjecture}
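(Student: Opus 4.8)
The plan is to realize $L(\bpsi)$ as a subquotient of a $\Uqb$-module induced from the shifted quantum affine algebra module $L^{\mu}(\bpsi)$, and then to read off the factorization $\chi_q = c \times a$ from a PBW-type decomposition of the induced module. Concretely, I would first invoke the induced representation functor of Hernandez \cite[Section~7.2]{hernandez2023representations}, which attaches to an $l$-highest weight $U_q^{\mu}$-module a $\Uqb$-module sharing the same highest $l$-weight $\bpsi$. The first step is therefore to verify that, for the shift $\mu \in P^{\vee}$ determined by the degrees $\alpha_i(\mu) = \deg\psi_i(u)$, the irreducible $\Uqb$-module $L(\bpsi)$ is the unique irreducible quotient of $\mathrm{Ind}(L^{\mu}(\bpsi))$; this should follow from matching highest $l$-weights together with the universal property of $L(\bpsi)$ provided by Proposition~\ref{prop:unique LwM} (for $w = e$).

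The second step is the computation of $\chi_q(\mathrm{Ind}(L^{\mu}(\bpsi)))$. Here I would use a triangular decomposition of $\Uqb$ adapted to the shift: the induced module is, as a vector space, a tensor product of $L^{\mu}(\bpsi)$ with a free module over the subalgebra generated by those positive real root vectors $E_{\alpha+m\delta}$ that act freely after induction. Grading by weights and applying $\phi_i^+(u)$ along the lines of Lemma~\ref{lemma:tensor of weights} gives
\[\chi_q(\mathrm{Ind}(L^{\mu}(\bpsi))) = \Big(\prod_{\alpha \in \Delta_+}\big(\tfrac{1}{1-e^{-\alpha}}\big)^{\max(0,\alpha(\mu))}\Big)\,\chi_q(L^{\mu}(\bpsi)),\]
the first factor being the graded dimension of this free part. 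The combinatorial heart of this step is to show that the number of free loop-generators in the root direction $\alpha$ equals exactly $\max(0,\alpha(\mu))$: this is where the shift $\mu$ enters, the exponent being nonzero precisely along the roots $\alpha$ on which $\mu$ is positive, reflecting which Drinfeld currents acquire extra freedom under the shift.

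The third step is to identify $a := \chi_q(L^{\mu}(\bpsi))$ with the non-constant part, that is, to check $a \in \bpsi\,\overline{\mathscr{A}}_e$ and $c \in \overline{\mathscr{C}}_e$, so that the factorization matches the form predicted in Conjecture~\ref{conj:inductive limit and projected limit}(3) with $w = e$. Since $\chi_q(L^{\mu}(\bpsi))$ for a shifted quantum affine algebra lands in the $A_{i,a}^{\pm 1}$-variables by the analogue of the Frenkel--Mukhin theorem \cite{frenkel2001combinatorics}, and $c$ is manifestly a power series in the $e^{-\alpha}$, the two factors sit in complementary subrings of $\overline{\mathscr{CA}}_e$ and their product is unambiguous.

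The main obstacle I expect is the second step, and more precisely proving that $\chi_q(\mathrm{Ind}(L^{\mu}(\bpsi)))$ actually coincides with $\chi_q(L(\bpsi))$ rather than merely dominating it. The induced module need not be irreducible, so one must rule out cancellations when passing to the irreducible quotient; equivalently, one must show that the kernel of $\mathrm{Ind}(L^{\mu}(\bpsi)) \to L(\bpsi)$ contributes no $q$-character. I would attack this by comparing with the known cases: the prefundamental representations and the $\sltwo$, $\mathfrak{sl}_3$ computations of Example~\ref{ex:projectedlimitsl2} and Example~\ref{ex:projectedlimitsl3}, where $c$ is exactly the predicted product, and by using the asymptotic construction of Section~\ref{sec:inductive systems} to control the free part as a genuine limit of finite-dimensional weight multiplicities. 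A secondary difficulty is the non-dominant case, where some $\alpha(\mu) < 0$: there the naive induction over-counts, and the truncation encoded by $\max(0,\alpha(\mu))$ must be justified by showing the corresponding root vectors act locally nilpotently rather than freely.
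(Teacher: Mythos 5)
The statement you are trying to prove is stated in the paper as Conjecture~\ref{conj:nonconst part sQAA}: the paper offers no proof of it in general, only a list of special cases where it is known (Remark~\ref{remark:examples of conj decomp}: $\mu$ antidominant, where restriction from $U_q^{\mu}$ to $\Uqb$ preserves irreducibility and $c=1$; and $\bpsi$ a monomial in the $\bpsi_{i,a}$ and $Y_{i,a}$, where $c$ is the explicit product \eqref{eq:explicit formula c} by results of Hatayama et al., Li, Lee, and Feigin et al.). So there is no proof in the paper to compare against, and your text should be read as a proposed strategy rather than a proof.

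As a strategy it is reasonable, but it has two genuine gaps, both of which you partly acknowledge and neither of which you close. First, the combinatorial heart of your second step --- that the ``free part'' of the induced module has graded character exactly $\prod_{\alpha \in \Delta_+}(1-e^{-\alpha})^{-\max(0,\alpha(\mu))}$ --- is asserted, not derived; you would need an actual PBW-type basis for the induced module adapted to the shift $\mu$, and in particular a proof that the root vectors in directions with $\alpha(\mu)>0$ act freely with multiplicity precisely $\alpha(\mu)$ while those with $\alpha(\mu)\le 0$ contribute nothing. No such triangular decomposition compatible with the shift is established in this paper (the paper explicitly notes that even for $\catO^w$ the analogue of the triangular decomposition is missing), and you cite none. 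Second, and more seriously, the equality $\chi_q(\mathrm{Ind}(L^{\mu}(\bpsi))) = \chi_q(L(\bpsi))$ is exactly the open content of the conjecture: the induced module is in general reducible, its $q$-character only dominates that of the irreducible quotient, and ``comparing with known cases'' ($\sltwo$, $\mathfrak{sl}_3$, prefundamentals) is evidence, not an argument. You should also double-check the direction of the induction functor of \cite[Section~7.2]{hernandez2023representations} before relying on it; the universal-property step via Proposition~\ref{prop:unique LwM} only works if the functor genuinely produces a $\Uqb$-module of highest $l$-weight $\bpsi$ lying in $\catO$. Until these two points are settled the factorization remains conjectural.
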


\begin{remark}\label{remark:examples of conj decomp}
	We remark that this conjecture is proved true when $\mu \in P^{\vee}_- = \oplus_{i \in I} \Z_{\leq 0} \omega_i^{\vee}$, since in this case the shifted quantum affine algebra contains the Borel subalgebras as a subalgebra, and the restriction preserves irreducibility \cite[Corollary~4.12]{hernandez2023representations}. In this case, the constant part $c=1$. 
	
	It is also proved true when $\bpsi$ is a monomial in $\bpsi_{i,a}$,  $i \in I, a \in \Ci^*$. In this case, the non-constant part $a = \bpsi$, and the constant part $c$ is explicit as in \eqref{eq:explicit formula c} \cite{hatayama1999remarks,li2016graded,lee2021product}. More generally, it still holds when $\bpsi$ is a monomial in $\bpsi_{i,a}$ and $Y_{i,a}$, $i \in I, a \in \Ci^*$ \cite[Section~5.2]{feigin2017finite}.
\end{remark}

\begin{proposition}\label{prop:longest w0}
	Conjecture~\ref{conj:OOw differs by const} holds when $w = w_0$ the longest element in $W$.
\end{proposition}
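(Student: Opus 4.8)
The plan is to exploit the special feature of $w_0$: the $w_0$-normalization divides by the \emph{lowest} $l$-weight, which forces the projected limit to collapse onto a usual character. I would reduce the statement to two collapse facts. \emph{Fact (I):} $\pi_{q,\infty}^{w_0}\in\overline{\mathscr{C}}_{w_0}$ and $\pi_{q,\infty}^{w_0}=w_0\bigl(\chi(L(\bpsi_{i,a}^{-1}))\bigr)$. \emph{Fact (II):} the simple module $L(T_{w_0}(\bpsi_{i,a}^{-1}))$ of $\catO$ has $q$-character $\chi_q\bigl(L(T_{w_0}(\bpsi_{i,a}^{-1}))\bigr)=T_{w_0}(\bpsi_{i,a}^{-1})\cdot\iota\bigl(\chi(L(\bpsi_{i,a}^{-1}))\bigr)$, where $\iota$ is the involution $e^{\lambda}\mapsto e^{-w_0\lambda}$ of usual characters. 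Granting these, Fact~(I) supplies the factorization demanded by Conjecture~\ref{conj:inductive limit and projected limit} for free, with constant part $c=\pi_{q,\infty}^{w_0}\in\overline{\mathscr{C}}_{w_0}$ and non-constant part $a=\bpsi=T_{w_0}(\bpsi_{i,a}^{-1})\in\bpsi\,\overline{\mathscr{A}}_{w_0}$. By the definition of $c\mapsto c^{-1}$ (the substitution $e^{\alpha}\mapsto e^{-\alpha}$), applying it to $c=w_0(\chi(L(\bpsi_{i,a}^{-1})))$ gives $c^{-1}=\iota\bigl(\chi(L(\bpsi_{i,a}^{-1}))\bigr)\in\overline{\mathscr{C}}_e$, whence $c^{-1}\times a=\bpsi\cdot\iota\bigl(\chi(L(\bpsi_{i,a}^{-1}))\bigr)=\chi_q\bigl(L(T_{w_0}(\bpsi_{i,a}^{-1}))\bigr)$ by Fact~(II), which is exactly the assertion of Conjecture~\ref{conj:OOw differs by const}.

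To prove Fact~(I), recall that by Theorem~\ref{thm:Chari l weight} the monomial $T_{w_0}M_k$ is the lowest $l$-weight of $L(M_k)$, so every monomial of $\chi_q^{w_0}(L(M_k))=\chi_q(L(M_k))/T_{w_0}M_k$ is a product of \emph{positive} powers of the $A_{j,aq^n}$ with $\Lambda$-degree in the cone $w_0\Lambda_-=\bigoplus_i\N\alpha_i$. Fix $R$. A monomial still carries an $A$-variable after $\pi_R$ only if, climbing from the bottom, it reaches a subscript $aq^n$ with $n\geq R$; to do so it must absorb all intermediate factors with $n<R$, whose number grows linearly in $k$, so that the monomial acquires an $e^{\alpha}$-weight tending to infinity in the cone as $k\to\infty$. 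Hence these terms vanish in the topology of $\overline{\mathscr{CA}}_{w_0}$, the limit $\pi_{q,\infty}^{w_0,\geq R}$ exists, is independent of $R$, and lies in $\overline{\mathscr{C}}_{w_0}$ (this already establishes the convergence predicted by Conjecture~\ref{conj:limit of projected w norm} for $w_0$). Being a pure usual character, the limit equals the image of $\chi_q^{w_0}(L(M_k))$ under the total collapse $A_{j,aq^n}\mapsto e^{\alpha_j}$, which is $w_0\bigl(e^{-k\omega_i}\chi(L(M_k))\bigr)$ by the $W$-invariance of usual characters; letting $k\to\infty$ and invoking the computation in the proof of the Corollary of Conjecture~\ref{conj:usual char w} yields $\pi_{q,\infty}^{w_0}=w_0\bigl(\chi(L(\bpsi_{i,a}^{-1}))\bigr)$.

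For Fact~(II), I would first check, directly from Definition~\ref{def:barid group action on psi}, that $T_{w_0}(\bpsi_{i,a}^{-1})=\bpsi_{\bar{i},b}$ is a single positive prefundamental $l$-weight at the dual node $\bar{i}$ (with $w_0(\alpha_i)=-\alpha_{\bar{i}}$); this reproduces $T_{w_0}(\bpsi_{1,a}^{-1})=\bpsi_{1,aq^2}$ for $\sltwo$ and $T_{w_0}(\bpsi_{1,a}^{-1})=\bpsi_{2,aq^3}$ for $\mathfrak{sl}_3$. The $q$-character of $L(\bpsi_{\bar{i},b})$ is then read off from the known $q$-characters of prefundamental representations \cite{hernandez2012asymptotic,frenkel2021folded}: it is $\bpsi_{\bar{i},b}$ times a pure usual character, and that usual character coincides with $\iota\bigl(\chi(L(\bpsi_{i,a}^{-1}))\bigr)$ because the two prefundamental modules attached to a node share the same usual character while the negative prefundamentals at the nodes $i$ and $\bar{i}$ are exchanged by the diagram involution $\iota=-w_0$. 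This gives Fact~(II) and closes the chain above.

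The main obstacle is the collapse phenomenon itself, namely proving that no $A$-variable survives in either limit. In type $A$ this is precisely the content of \cite[Eq.~4.7]{feigin2017finite} (the reference invoked in the Remark preceding this Proposition), and the examples above confirm it. For general $\g$ one still needs an a priori argument: for Fact~(I), uniform control of the lowest part of $\chi_q(L(M_k))$ showing that every $A$-monomial is carried to infinity in the $w_0\Lambda_-$-adic topology; for Fact~(II), the statement that $\chi_q(L(\bpsi_{\bar{i},b}))$ contains no $A$-variables, together with the identity of the usual characters of the positive and negative prefundamental families. Once these two collapses are established, the factorization, the inversion $c\mapsto c^{-1}$, and the final product are purely formal.
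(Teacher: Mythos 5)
Your proposal follows essentially the same route as the paper's proof: both hinge on the collapse of the $w_0$-normalized $q$-character onto a pure usual character (controlled via the Frenkel--Mukhin structure of $\chi_q(L(M_k))$ above its lowest monomial $T_{w_0}M_k$), the identification $\pi_{q,\infty}^{w_0}=w_0\bigl(\chi(L(\bpsi_{i,a}^{-1}))\bigr)=\prod_{\alpha\in\Delta_+}(1-e^{\alpha})^{-\alpha(\omega^{\vee}_{\bar{i}})}$, and the comparison with the known constant normalized $q$-character of the positive prefundamental $L(T_{w_0}(\bpsi_{i,a}^{-1}))=L(\bpsi_{\bar{i},aq^{h^{\vee}r^{\vee}}})$. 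Your explicit flagging of the needed uniform control of the bottom of $\chi_q(L(M_k))$ for general $\g$ is the same point the paper disposes of by invoking the Frenkel--Mukhin algorithm, so the argument is correct and matches the paper's.
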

\begin{proof}
	When $w = w_0$ the longest element in Weyl group, it is known that $T_{w_0} M_k$ equals the $l$-lowest weight of KR-modules, which is given by $ Y_{\bar{i},aq_{\bar{i}}^{h^{\vee}r^{\vee}-1}}^{-1} Y_{\bar{i},aq_{\bar{i}}^{h^{\vee}r^{\vee}-3}}^{-1} \cdots Y_{\bar{i},aq_{\bar{i}}^{h^{\vee}r^{\vee}-2k+1}}^{-1}$ \cite{chari1996minimalsimp}.  Here recall that $\bar{i} \in I$ is such that $w_0(\alpha_i) = -\alpha_{\bar{i}}$. 
	
	By Frenkel-Mukhin algorithm \cite{frenkel2001combinatorics}, for any fixed $\lambda \in \Lambda$ and $R \in \Z$, for $k$ large enough, all $w_0$-normalized monomials in any weight space $V_{w_0(k\omega_i) + \lambda}$ are projected to $e^{\lambda}$ under the morphism $\pi_R$ in \eqref{eq:piR}. In particular, for any fixed $R$, the projected $w_0$-normalized $q$-characters converge to $\pi_{q,\infty}^{w_0,\geq R}$ when $k \to \infty$, and this limit is an element in $\Z \llbracket e^{\alpha_i} \rrbracket = \overline{\mathscr{C}}_{w_0}$, which is in fact independent of $R$.
	
	As a consequence, the projected limit  $\pi_{q,\infty}^{w_0}$ simply equals to $\lim_{k \to \infty} e^{-w_0(k\omega_i)}\chi(L(M_k))$.
	
	Since the usual characters of finite-dimensional representations $\chi(L(M_k))$ are invariant under Weyl group actions, we have 
	\[\pi_{q,\infty}^{w_0} = \lim_{k \to \infty} w_0(e^{-k\omega_i}\chi(L(M_k))) = w_0(\chi(L_{i,a}^-)) = w_0(\prod_{\alpha \in \Delta_+}(\frac{1}{1-e^{-\alpha}})^{\alpha(\omega_i^{\vee})}) = \prod_{\alpha \in \Delta_+}(\frac{1}{1-e^{\alpha}})^{\alpha(\omega^{\vee}_{\bar{i}})}.\]
	
	In particular, it differs from
	\[ \widetilde{\chi}_q (L_{\bar{i},aq^{h^{\vee}r^{\vee}}}^+) = \prod_{\alpha \in \Delta_+}(\frac{1}{1-e^{-\alpha}})^{\alpha(\omega^{\vee}_{\bar{i}})}\]
	by changing all $e^{-\alpha_i}$ to $e^{\alpha_i}$.
\end{proof}

\bibliographystyle{alpha-abbrev}
\bibliography{bib}

\end{document}